\def\R{\mathbb{R}}
\def\Rk{\mathbb{R}^k}
\def\Rn{\mathbb{R}^n}
\def\eqref#1{{\normalfont(\ref{#1})}}
\def\SDP{\mbox{\boldmath$SDP$}\,}
\def\SDPc{{\mbox{\boldmath$SDP$,}\,}}
\def\EDM{\mbox{\boldmath$EDM$}\,}
\def\edm{\mbox{\boldmath$EDM$}\,}
\def\EDMc{\mbox{\boldmath$EDM$,}\,}
\def\EDM{\mbox{\boldmath$EDM$}\,}
\def\SNL{\mbox{\boldmath$SNL$}\,}
\def\SNLc{\mbox{\boldmath$SNL$,}\,}
\def\EDMC{\mbox{\boldmath$EDMC$}\,}
\def\EDMCc{\mbox{\boldmath$EDMC$},\,}
\def\EDMCp{\mbox{\boldmath$EDMC$}.\,\,}
\def\eqref#1{{\normalfont(\ref{#1})}}
\newtheorem{nmbrs}{Numbering}[section]
\newtheorem{defi}[nmbrs]{Definition}
\newtheorem{prop}[nmbrs]{Proposition}
\newtheorem{lem}[nmbrs]{Lemma}
\newtheorem{thm}[nmbrs]{Theorem}
\newtheorem{cor}[nmbrs]{Corollary}
\newtheorem{rem}[nmbrs]{Remark}
\newtheorem{remark}[nmbrs]{Remark}
\newcommand{\Ss}{{\mathcal S} }
\newcommand{\Ekyb}{{\mathcal E}^{n}(1\!:\!k,\bar D)}
\newcommand{\Eayb}{{\mathcal E}^{n}(\alpha,\bar D)}
\newcommand{\LL}{{\mathcal L} }
\newcommand{\RR}{{\mathcal R} }
\newcommand{\EE}{{\mathcal E} }
\newcommand{\DD}{{\mathcal D} }
\newcommand{\CC}{{\mathcal C} }
\newcommand{\GG}{{\mathcal G} }
\newcommand{\TT}{{\mathcal T} }
\newcommand{\Sh}{{\mathcal S}_H}
\newcommand{\Sc}{{\mathcal S}_C }
\newcommand{\En}{{{\mathcal E}^n} }
\newcommand{\Ek}{{{\mathcal E}^k} }
\newcommand{\Sayb}{{\mathcal S}^{n}(\alpha,\bar Y)}
\newcommand{\Sapyb}{{\mathcal S}_+^{n}(\alpha,\bar Y)}
\newcommand{\Skyb}{{\mathcal S}^{n}(1\!:\!k,\bar Y)}
\newcommand{\Skpyb}{{\mathcal S}_+^{n}(1\!:\!k,\bar Y)}
\newcommand{\rank}{{\rm rank\,}}
\newcommand{\Sn}{{\mathcal S^n}}
\newcommand{\Snp}{{\mathcal S^n_+\,}}
\newcommand{\Sk}{{\mathcal S^{k}}\,}
\newcommand{\Skp}{{\mathcal S^{k}_+}\,}
\newcommand{\Skpp}{{\mathcal S^{k}_{++}}\,}
\newcommand{\Stp}{{\mathcal S^{t}_+}\,}
\newcommand{\Stpp}{{\mathcal S^{t}_{++}}\,}
\newcommand{\MM}{{\mathcal M\,}}
\newcommand{\Mn}{{\mathcal M^n\,}}
\newcommand{\NN}{{\mathcal N}}
\newcommand{\A}{{\mathcal A}}
\newcommand{\N}{{\mathcal N\,}}
\newcommand{\bbm}{\begin{bmatrix}}
\newcommand{\ebm}{\end{bmatrix}}
\newcommand{\bem}{\begin{pmatrix}}
\newcommand{\eem}{\end{pmatrix}}
\newcommand{\beq}{\begin{equation}}
\newcommand{\beqs}{\begin{equation*}}
\newcommand{\bet}{\begin{table}}
\newcommand{\eeq}{\end{equation}}
\newcommand{\eeqs}{\end{equation*}}
\newcommand{\beqr}{\begin{eqnarray}}
\DeclareMathOperator{\KK}{{\mathcal K} }
\DeclareMathOperator{\trace}{{trace}}
\DeclareMathOperator{\diag}{{diag}}
\DeclareMathOperator{\Diag}{{Diag}}
\DeclareMathOperator{\offDiag}{{offDiag}}
\DeclareMathOperator{\relint}{{relint}}
\newcommand{\nc}{\newcommand}
\nc{\arrow}{{\rm arrow\,}}
\nc{\Arrow}{{\rm Arrow\,}}
\nc{\BoDiag}{{\rm B^0Diag\,}}
\nc{\bodiag}{{\rm b^0diag\,}}
\nc{\Mmn}{{\mathcal M}_{m,n} }
\nc{\kwqqp}{Q{$^2$}P\,}
\nc{\kwqqps}{Q{$^2$}Ps}
\nc{\notinaho}{(X,S)\in \overline{AHO}(\A)}
\nc{\inaho}{(X,S)\in AHO(\A)}
\newcommand{\bea}{\begin{eqnarray}}%
\newcommand{\eea}{\end{eqnarray}}%
\newcommand{\beas}{\begin{eqnarray*}}%
\newcommand{\eeas}{\end{eqnarray*}}%
\newcommand{\cone}{{\rm cone\, }}%
\newcommand{\sspan}{{\rm span}\,}
\newcommand{\face}{{\rm face\,}}%
{}
\begin{document}

\bibliographystyle{plain}
\title{
Explicit 
Sensor Network Localization\\
 using\\
 Semidefinite Representations and Facial Reductions
   \footnote{Research supported by Natural Sciences Engineering Research
     Council Canada and a grant from AFOSR.}
}
             \author{
Nathan Krislock
\and
Henry Wolkowicz
}
\date{\today}
          \maketitle
\begin{center}
          University of Waterloo\\
          Department of Combinatorics and Optimization\\
          Waterloo, Ontario N2L 3G1, Canada\\
          Research Report CORR 2009-04
\end{center}

{\bf Key Words:}  Sensor Network Localization, 
Euclidean Distance Matrix Completions,
Semidefinite Programming, loss of the Slater constraint qualification.

\noindent {\bf AMS Subject Classification:}

\begin{abstract}
\label{pageabstract}
The sensor network localization, \SNL, problem in embedding dimension
$r$, consists of locating the positions of wireless sensors, given only the 
distances between sensors that are within radio range and the positions of a 
subset of the sensors (called anchors).
Current solution techniques relax this problem to a weighted, nearest, 
(positive) semidefinite programming, \SDPc completion problem, by 
using the linear mapping between Euclidean distance matrices, \EDMc and semidefinite matrices.
The resulting \SDP is solved using primal-dual interior point solvers, 
yielding an expensive and inexact solution.

This relaxation is highly degenerate in the
sense that the feasible set is restricted to a low dimensional face of
the \SDP cone, implying that the Slater constraint qualification fails.
Cliques in the graph of the \SNL problem give rise to this degeneracy
in the \SDP relaxation.
In this paper, we take advantage of the absence of the Slater
constraint qualification and derive a technique for the \SNL problem,
with exact data, that explicitly solves the corresponding 
rank restricted \SDP problem. No \SDP solvers are used.
For randomly generated instances,
we are able to efficiently solve many huge instances of this NP-hard problem
to high accuracy,
by finding a representation of the minimal face of the \SDP
cone that contains the \SDP matrix representation of the \EDM.
The main work of our algorithm
consists in repeatedly finding the intersection of subspaces that
represent the faces of the \SDP cone that correspond
to cliques of the \SNL problem.

\end{abstract}
\newpage

\tableofcontents  \label{lists}
\listoftables
\listoffigures
\listofalgorithms
\newpage

\section{Introduction}
\label{sect:intro}

The sensor network localization problem, \SNLc%
\index{\SNL, sensor network localization}%
\index{sensor network localization, \SNL}
consists in locating the positions
of $n$ wireless sensors, $p_i \in \R^r$, $i=1,\ldots,n$, given only the 
(squared) Euclidean distances $D_{ij}=\|p_i-p_j\|_2^2$ 
between sensors that are within a given
radio range, $R>0$, and given the positions of a 
\index{radio range, $R$}
subset of the sensors, $p_i$, $i=n-m+1,\ldots,n$
(called anchors); $r$ is the {\em embedding dimension} of the problem.
\index{embedding dimension ({\em fixed}), $r$}
Currently, many solution techniques for this problem use a relaxation to
a nearest, weighted, semidefinite approximation problem 
\beq
\label{eq:sdprelax}
\min_{Y \succeq 0, \, Y \in \Omega} \left\| W\circ\left(\KK(Y) - D\right) \right\|,
\eeq
where $Y\succeq 0$ denotes positive semidefiniteness, 
$Y \in \Omega$ denotes additional linear constraints, $\KK$ is a
specific linear mapping, and $\circ$ denotes the 
{\em Hadamard (elementwise) product}.
\index{Hadamard product}
This approach requires
semidefinite programming, \SDPc primal-dual interior point  (p-d i-p)
techniques; see, for example,
\cite{AlfakihAnjosKPW:08,AlKaWo:97,biswasphd07,BiswasYe:04,MR2191577,dattorro:05,pongtseng:09}.
This yields an expensive and inexact solution.

The \SNL problem is a special case of the Euclidean Distance Matrix,
\index{\EDM} \EDMc completion problem, \EDMCp
If $D$ is a {\em partial} \EDMc then the completion problem consists in finding
the missing elements (squared distances) of $D$.  
It is shown in \cite{DiKrQiWo:06},
that there are advantages for handling the \SNL problem as an
\EDMCc and ignoring the distinction between the
anchors and the other sensors until after the \EDMC is solved.
In this paper we use this framework and derive an algorithm that 
locates the sensors by exploiting the structure and
implicit degeneracy in the \SNL problem. In particular, we solve the
\SDP problems {\em explicitly} (exactly)
without using any p-d i-p techniques.
We do so by repeatedly viewing \SNL in three equivalent forms:
as a graph realization problem, as a \EDMC, and as a rank restricted
\SDP.

A common approach to solving the \EDMC problem is to 
relax the rank constraint and solve
a weighted, nearest, positive semidefinite
completion problem (like problem~\eqref{eq:sdprelax}) using semidefinite programming, \SDP. 
The resulting \SDP 
is, implicitly, highly degenerate in the
sense that the feasible semidefinite matrices have low rank. In particular,
\label{rankpage}
cliques in the graph of the \SNL problem reduce the ranks of these feasible
semidefinite matrices.
This means that the Slater constraint qualification (strict feasibility)
implicitly fails for the \SDP.  Our algorithm is based on 
exploiting this degeneracy.
We characterize the face of the \SDP cone that corresponds to a given
clique in the graph, thus reducing the size of the \SDP problem.
Then, we characterize the intersection
of two faces that correspond to overlapping cliques.
This allows us to explicitly {\em grow/increase} the size of the cliques by
repeatedly finding the intersection of subspaces that represent
the faces of the \SDP cone that correspond to these cliques.
Equivalently, this corresponds to completing overlapping blocks of the \EDM.
In this way, we further reduce the dimension of the faces until
we get a completion of the entire \EDM. The intersection of the
subspaces can be found using a singular value decomposition (SVD) or by
exploiting the special structure of the subspaces.
No \SDP solver is used.
\index{embedding dimension ({\em fixed}), $r$}
Thus we solve the \SDP problem in a finite number of steps, where the work
of each step is to find the intersection of two subspaces (or, equivalently, 
each step is to find the intersection of two faces of the \SDP cone). 

Though our results hold for general embedding dimension $r$,
\index{embedding dimension ({\em fixed}), $r$}
our preliminary numerical tests involve sensors with 
embedding dimension $r=2$ and $r=3$.  The sensors are in the region $[0,1]^r$. 
There are $n$ sensors, $m$ of which are anchors. The radio range is
$R$ units.
\index{radio range, $R$}

\subsection{Related Work/Applications}
The number of applications for distance geometry problems is large and
increasing in number and importance. The particular case of \SNL has
applications to environmental monitoring of geographical regions, as
well as tracking of animals and machinery; see, for example,
\cite{biswasphd07,dattorro:05}.
There have been many algorithms published recently that solve the \SNL
problem. Many of these involve \SDP relaxations and use \SDP solvers;
see, for example, 
\cite{biswasphd07,biswasliangtohwangye,biswasliangtohye:05,MR2398864,BiswasYe:04,
MR2191577,DiKrQiWo:06}
and more recently  \cite{KimKojimaWaki:09,WangZhengBoydYe:06}.
Heuristics are presented in, for example, \cite{cassioli:08}.
\SNL is closely related to the \EDMC problem; see, for example,
\cite{AlKaWo:97,dattorro:05} and the survey \cite{AlfakihAnjosKPW:08}.

Jin et al \cite{MR2274505,Jin:05} propose the {SpaseLoc} heuristic.
It is limited to $r=2$ and uses an \SDP solver for small localized
subproblems. They then {\em sew} these subproblems together.
So \& Ye \cite{MR2295148} show that the problem of solving a noiseless
\SNL with a unique solution can be phrased as an \SDP and thus can be solved
in polynomial time. They also give an efficient criterion for checking
whether a given instance has a unique solution for $r=2$.

Two contributions of this paper are: we do not use
iterative p-d i-p techniques to solve the \SDP, but rather, we solve it
with a finite number of explicit solutions; 
we start with local cliques and expand the cliques.
Our algorithm has four different basic steps. The first basic step 
takes two cliques for which the intersection contains at least $r+1$ nodes 
and implicitly completes the 
corresponding \EDM to form the union of the cliques.
The second step does this when one of the cliques is a single element.
Therefore, this provides an extension of the algorithm in 
\cite{egwymab04}, where Eren et al
have shown that the family of {\em trilateration graphs}
\index{trilateration graph} admit a polynomial time algorithm for computing
a realization in a required dimension.\footnote{A graph is a trilateration graph 
in dimension $r$ if there exists an 
ordering of the nodes $1, \ldots, r + 1, r + 2, \ldots, n$ 
such that: the first $r + 1$ nodes form a clique,
and each node $j > r + 1$ has at least $r + 1$ edges to nodes earlier
in the sequence.}  Our first basic step also provides an explicit form for finding a
realization of a \index{uniquely localizable graph}
{\em uniquely localizable graph}\footnote{A graph is uniquely localizable in
dimension $r$ if it has a unique realization in $\R^r$ and it does not
have any realization whose affine span is $\R^h$, where $h>r$; see
\cite{MR2295148}.}.
Our algorithm repeatedly finds explicit solutions of an \SDP.
Other examples of finding explicit solutions of an \SDP are given in
\cite{MR97b:90077,Wolk:93}.

The \SNL problem with given embedding dimension $r$ is NP-hard
\index{embedding dimension, $r$}
\cite{hendrickson:91,MR92m:05182,sax79}.
However, from our numerical tests it appears that random problems that
have a unique solution can be solved very efficiently.
This phenomenon fits into the results in
\cite{AmVav:09,FeiKrau:00}.

  \subsection{Outline}

We continue in Section \ref{sect:prels} to present notation and
results that will be used. The facial reduction process is based on the results in Section~\ref{sect:cliqred}. 
The single clique facial reduction is given in Theorem~\ref{thm:onecliquered}; 
the reduction of two overlapping cliques in the
rigid and nonrigid cases is
presented in Theorem~\ref{thm:twocliquered} and
Theorem~\ref{thm:degcompl}, respectively; 
absorbing nodes into cliques in the rigid and nonrigid
cases is given in Corollaries~\ref{cor:absorbsingle} and
\ref{cor:degabsorbsingle}, respectively.
These results are then used in our algorithm in
Section~\ref{sect:algor}. The numerical tests  appear in 
Section~\ref{sect:numerics} and Section~\ref{sect:noisy}.
Our concluding remarks are given in Section~\ref{sect:concl}.

  \subsection{Preliminaries}
\label{sect:prels}

We work in the vector space of 
{\em real symmetric $k \times k$ matrices}, $\Ss^k$,
\index{symmetric $k \times k$ matrices, $\Sk$}
equipped with the 
{\em trace inner product}, $\langle A,B\rangle = \trace AB$.
\index{trace inner product, $\langle A,B\rangle = \trace AB$}
We let $\Skp$ and $\Skpp$
denote the cone of positive semidefinite and positive definite
\index{cone of positive semidefinite matrices, $\Skp$}
matrices, respectively;
\index{cone of positive definite matrices, $\Skpp$}
$A \succeq B$ and $A \succ B$ denote the 
L\"owner partial order, $A-B \in \Skp$ and
\index{L\"owner partial order, $A \succeq B$}
$A-B \in \Skpp$, respectively;
$e$ denote the vector of ones of appropriate dimension;
\index{vector of ones, $e$}
$\RR (\LL)$ and $\NN (\LL)$ denote the range space and null space of the linear transformation $\LL$, respectively;
$\cone(S)$ denote the convex cone generated by the set $S$.
We use the {\sc Matlab} notation $1\!:\!n = \{1,\ldots,n\}$.
\index{$1:n = \{1,\ldots,n\}$}
\index{range space of $\LL$, $\RR (\LL)$}
\index{null space of $\LL$, $\NN (\LL)$}
\index{cone generated by $C$, $\cone (C)$}

A subset $F\subseteq K$ is a {\em face of the cone $K$}, denoted
$F \unlhd K$,  if  \index{face, $F \unlhd K$}
\[
\left( x, y \in K, \ \frac 12(x+y) \in  F\right)
                 \implies \left(\cone \{x,y\} \subseteq F\right).
\]
If $F \unlhd K$, but is not equal to $K$, we write $F \lhd K$.
If $\{0\} \neq F \lhd K$, then $F$ is a {\em proper face} of $K$.
\index{proper face} 
For $S \subseteq K$, we let $\face\!(S)$ denote the smallest face of $K$
that contains $S$. 
A face $F \unlhd K$ is an {\em exposed face} if it is the 
intersection of $K$ with a hyperplane.
\index{exposed face}
The cone $K$ is {\em facially exposed} if every face $F\unlhd K$ is exposed.
\index{facially exposed cone}

The cone $\Snp$ is facially exposed. Moreover, each face $F\unlhd \Snp$ is
determined by the range of any matrix $S$ in the 
relative interior of the face, $S \in
\index{relative interior, $\relint \cdot$}
\relint F$: if $S=U\Gamma U^T$ is the compact spectral decomposition of $S$
with the diagonal matrix of eigenvalues $\Gamma \in \Ss_{++}^t$, then
\label{pagecite}(e.g., \cite{PatakiSVW:99}) 
\beq
\label{eq:facerepr}
F=U \Ss_+^t U^T.
\eeq

A matrix $D=(D_{ij})\in \Sn$ 
with nonnegative elements and zero diagonal is called
a  {\em pre-distance matrix} \index{pre-distance matrix, $D$}. 
In addition, if there exist points $p_1,\ldots,p_n \in \R^r$ such that
\beq  \label{eq:dn}
  D_{ij}=\|{p_i-p_j}\|_2^2, \quad i,j=1,\ldots,n,
\eeq
then $D$ is called a {\em Euclidean distance matrix}, denoted
\index{Euclidean distance matrix, \EDM}
\EDM\@. Note that we work with {\em squared distances}. 
The smallest value of $r$ such that \eqref{eq:dn} holds 
 is called the {\em embedding dimension} of $D$. 
Throughout the paper, we assume that $r$ is given and {\em fixed}.
\index{embedding dimension ({\em fixed}), $r$}
The set of \EDM matrices forms a closed convex cone in $\Sn$, denoted
$\En$.
\index{cone of \EDM, $\Ek$}
If we are given an $n \times n$ partial \EDM $D_p$,
let $\GG=(N,E,\omega)$ be the corresponding simple graph on the
\index{graph of the \EDM, $\GG=(N,E,\omega)$} 
nodes $N = 1\!:\!n$ whose edges $E$ correspond to the known 
entries of $D_p$, with $(D_p)_{ij}=\omega_{ij}^2$, for all $(i,j) \in E$.

\begin{figure}[htb]
\epsfxsize=310pt
\centerline{\epsfbox{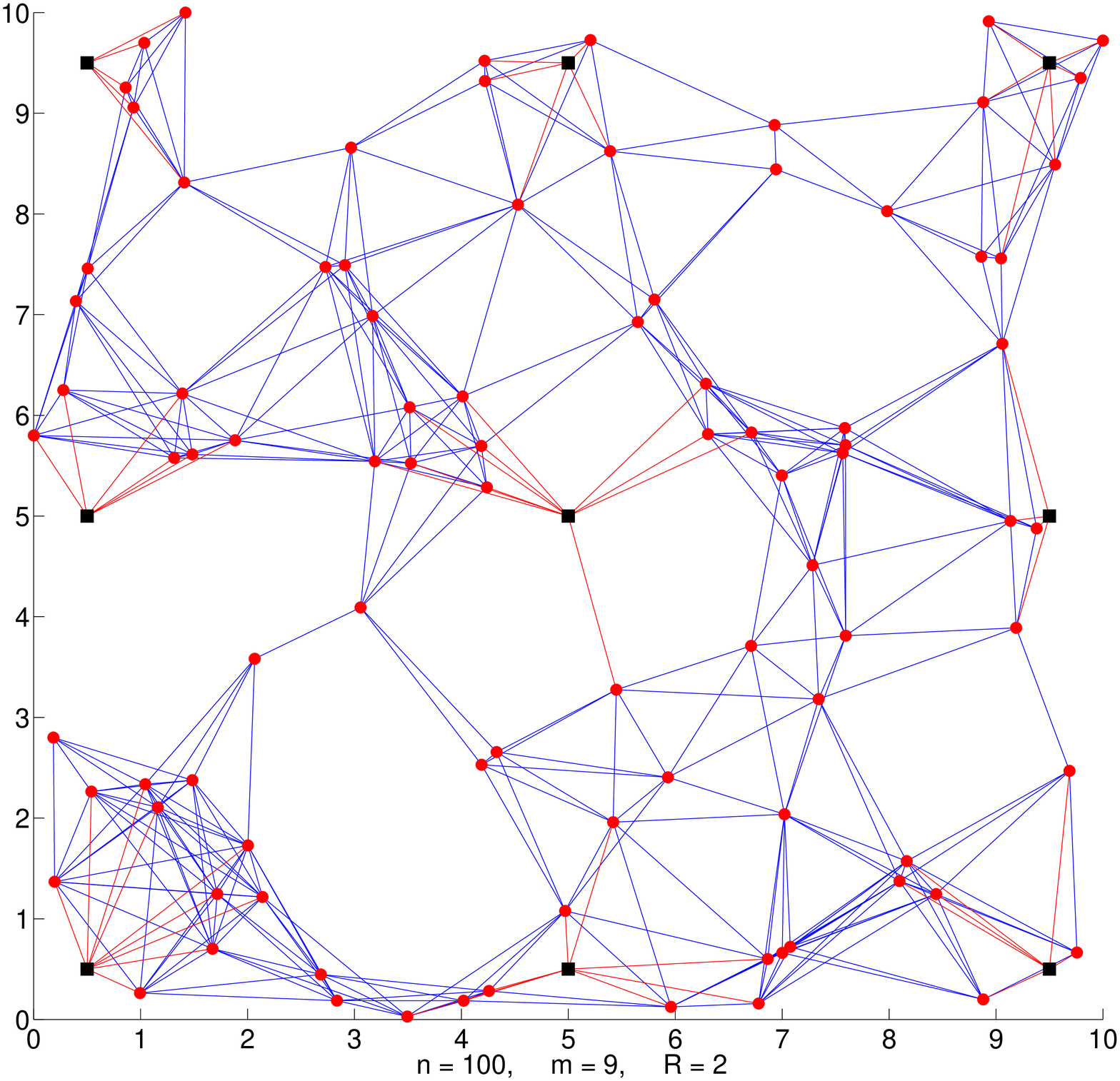}}
\caption{Graph of partial \EDM with sensors 
\textcolor{red}{$\circ$}  and anchors $\blacksquare$}
\label{fig:fc1}
\end{figure}

\begin{defi}
For $Y\in \Sn$ and $\alpha \subseteq 1\!:\!n$, we let $Y[\alpha]$ 
denote the corresponding 
\index{principal submatrix, $Y[\alpha]$}
{\em principal submatrix} formed from the rows and columns with indices
$\alpha$. If, in addition, $|\alpha |=k$ and
$\bar Y \in \Ss^k$ is given,  then we define
\[
\Sayb:= \left\{ Y \in \Sn : Y[\alpha] = \bar Y \right\}, \quad
\Sapyb:= \left\{ Y \in \Snp : Y[\alpha] = \bar Y \right\}.
\]
That is, the subset of matrices $Y \in \Sn$ ($Y \in \Snp$)
\index{principal submatrix set, $\Sayb$}
\index{principal submatrix positive semidefinite set, $\Sapyb$}
with principal submatrix $Y[\alpha]$ fixed to $\bar Y$.
\end{defi}
\noindent
For example, the subset of matrices
in $\Sn$ with the top left $k\times k$ block fixed is
\beq
\label{eq:setSblock}
\Skyb = \left\{Y \in \Sn : Y= 
\left[
\begin{array}{c|c}
 \bar Y & \cdot \cr
 \hline 
 \cdot & \cdot
\end{array}
\right]
\right\}.
\index{top-left block fixed, $\Skyb)$}
\index{$\Skyb)$, top-left block fixed}
\index{principal submatrix top-left block, $\Skyb$}
\index{$\Skyb$, principal submatrix top-left block}
\eeq

A clique $\gamma \subseteq 1\!:\!n$
\index{clique}
in the graph $\GG$ corresponds to a subset of sensors for  
which the distances $\omega_{ij}=\|p_i-p_j\|_2$ are known, for all
$i,j \in \gamma$; equivalently, the clique corresponds
to the principal submatrix $D_p[\gamma]$ of the
partial \EDM matrix $D_p$, where all the elements of $D_p[\gamma]$ are known.
	
Suppose that we are given a subset of the 
(squared) distances from \eqref{eq:dn} in the form of a partial \EDMc $D_p$. 
The {\em \EDM completion problem} consists of finding the missing entries 
of $D_p$ to complete the $\EDM$; see Figure \ref{fig:fc1}.
\index{\EDM completion problem}
This completion problem can be solved by finding a set of points $p_1,\ldots,p_n \in \R^r$ satisfying \eqref{eq:dn}, where $r$ is the embedding dimension of the partial \EDMc $D_p$.
\index{embedding dimension ({\em fixed}), $r$}
This problem corresponds to the graph
realizability problem with dimension $r$, which is the problem of finding 
\index{graph realizability}
positions in $\R^r$ for the vertices of a graph such that the 
inter-distances of these positions satisfy the 
given edge lengths of the graph.

Let $Y \in \Mn$ be an $n \times n$ real matrix and $y \in \Rn$ a vector.
\index{$n\times n$ matrices, $\Mn$}
We let $\mbox{diag}(Y)$ denote the vector in $\Rn$ formed from the 
diagonal of $Y$ and we let $\mbox{Diag}(y)$ denote the diagonal matrix in 
$\Mn$ with the vector $y$ along its diagonal. 
Note that $\diag$ and $\Diag$ are the adjoint 
linear transformations of each other: $\Diag = \mbox{diag}^*$.  
The operator $\offDiag$ can then be defined as 
$\offDiag (Y) := Y - \Diag ( \diag Y )$.
\index{offDiag operator of a matrix, $\offDiag M$}
\index{diagonal of a matrix, $\diag M$}
\index{diagonal matrix from a vector, $\Diag v$} 
For 
\[ 
P=\begin{bmatrix}
p_1^T\cr p_2^T\cr \vdots\cr p_n^T\cr
\end{bmatrix} \in \MM^{n\times r},
\] 
where $p_j$, $j=1,\ldots,n$, are the points used in \eqref{eq:dn},
let $Y:=PP^T$, and let $D$ be the corresponding \EDM satisfying \eqref{eq:dn}.
\index{matrix of points in space, $P$}
Defining the linear operators $\KK$ and $\DD_e$ on $\Sn$ as follows, we see that
\beq \begin{array}{rcl} \label{KK}
\KK(Y)& := &  \DD_e(Y)-2Y \\
      & := &  \mbox{diag}(Y)\,e^T + e\,\mbox{diag}(Y)^T - 2Y \\
      &  = &  \left( p_{i}^{T} p_{i} + p_{j}^{T} p_{j} 
                     - 2 p_{i}^{T} p_{j} \right)_{i,j = 1}^{n} \\
      &  = &  \left( \| p_{i} - p_{j} \|_{2}^{2} \right)_{i,j = 1}^{n} \\
      &  = &  D.
\end{array} \eeq
That is, $\KK$ maps the positive semidefinite matrix $Y$ onto the \EDM D.
More generally, we can allow for a general vector $v$ to replace $e$,
and define $\DD_v(Y) := \mbox{diag}(Y)\,v^T + v\,\mbox{diag}(Y)^T$. 
By abuse of notation, we also allow $\DD_v$ to act on a vector;
that is, $\DD_v(y) := yv^T+vy^T$.
The adjoint of $\KK$ is
\beq \begin{array}{rcl} \label{KKs}
\KK^*(D) &=& 2(\Diag(De)-D).
\end{array} \eeq

The linear operator $\KK$ is one-one and onto between the 
{\em centered} and {\em hollow} subspaces of $\Sn$, which are defined as
\beq \begin{array}{rcll}
\Sc &:=&  \{ Y \in \Sn :  Ye = 0 \} & \mbox{(zero row sums)}, \\
\Sh& := & \{ D \in \Sn :  \diag(D) = 0 \} &  = \RR(\offDiag).
\end{array}
\eeq
Let $J:=I-\frac 1n ee^T$ denote the orthogonal projection onto the
\index{$J$, orthogonal projection onto $\{e\}^\perp$}
subspace $\{e\}^\perp$ and define the linear operator
$\TT (D) := -\frac 12 J \offDiag (D) J$. 
\index{$\TT = \KK^\dagger$}
Then we have the following relationships.
\begin{prop}(\cite{homwolkA:04})
\label{prop:KTonetoone}
The linear operator $\TT$ is the generalized inverse of 
the linear operator $\KK$; that is, $\KK^\dagger =\TT$.  
Moreover:
\beq \label{eq:RN1}
\begin{array}{rcl}
\RR (\KK) = \Sh; & \quad  &\NN(\KK) = \RR(\DD_e);\\
\RR(\KK^*)=\RR (\TT) = \Sc; & \quad & \NN(\KK^*)= \NN(\TT) = \RR(\Diag);
\end{array}
\eeq
\beq \label{eq:RN3}
\begin{array}{rcl}
\Sn= \Sh \oplus  \RR(\Diag) = \Sc \oplus \RR(\DD_e).
\end{array}
\eeq
\end{prop}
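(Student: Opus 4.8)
The plan is to establish the two operator identities about $\KK$ first, deduce the adjoint relations by orthogonality, and only then identify $\TT$ with $\KK^\dagger$, using the geometric characterization of the Moore--Penrose inverse rather than verifying the four Penrose equations. First I would compute $\NN(\KK)$ and $\RR(\KK)$ directly. Writing $d=\diag(Y)$, the equation $\KK(Y)=de^T+ed^T-2Y=0$ forces $Y=\tfrac12(de^T+ed^T)$, and conversely every such $Y$ is in the kernel since its diagonal is consistent; hence $\NN(\KK)=\RR(\DD_e)$. Since the $(i,i)$ entry of $\KK(Y)$ is $2d_i-2Y_{ii}=0$, we get $\RR(\KK)\subseteq\Sh$. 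The map $y\mapsto ye^T+ey^T$ is injective (its $(i,i)$ entry gives $2y_i=0$), so $\dim\RR(\DD_e)=n$ and $\dim\RR(\KK)=\binom{n+1}{2}-n=\binom{n}{2}=\dim\Sh$; equality $\RR(\KK)=\Sh$ follows.

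Next I would read off the adjoint relations from the standard facts $\RR(\KK^*)=\NN(\KK)^\perp$ and $\NN(\KK^*)=\RR(\KK)^\perp$. Under the trace inner product the hollow matrices are exactly the orthogonal complement of the diagonal matrices, so $\NN(\KK^*)=\Sh^\perp=\RR(\Diag)$. For the other complement, the identity $\langle Y, ye^T+ey^T\rangle = 2\,y^TYe$ (valid for symmetric $Y$) shows that $Y\perp\RR(\DD_e)$ for all $y$ iff $Ye=0$, so $\RR(\KK^*)=\RR(\DD_e)^\perp=\Sc$. Counting dimensions, each pair of subspaces is orthogonal with dimensions summing to $\dim\Sn$, which gives the two decompositions $\Sn=\Sh\oplus\RR(\Diag)=\Sc\oplus\RR(\DD_e)$.

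The heart of the argument is the single computation $\TT\KK=P_{\Sc}$, the orthogonal projection onto $\Sc$. Because $\KK(Y)$ is hollow we have $\offDiag(\KK(Y))=\KK(Y)$, and since $Je=0$ and $e^TJ=0$ the two rank-one terms of $\KK(Y)$ are annihilated, leaving $\TT(\KK(Y))=-\tfrac12 J(-2Y)J=JYJ$. The double-centering map $Y\mapsto JYJ$ is self-adjoint and idempotent, has range in $\Sc$, and fixes $\Sc$, so it is precisely the orthogonal projection onto $\Sc=\NN(\KK)^\perp$. Together with $\TT(\Diag(y))=0$ (as $\offDiag$ kills diagonal matrices), this shows $\TT$ vanishes on $\RR(\KK)^\perp=\RR(\Diag)$ and inverts $\KK$ on $\NN(\KK)^\perp$: indeed $\Sc\cap\NN(\KK)=\{0\}$, since a matrix $ye^T+ey^T$ with zero row sums forces $y$ a multiple of $e$ and then $y=0$, so $\KK|_{\Sc}\colon\Sc\to\Sh$ is a bijection, and $\TT\KK(Y)=Y$ for $Y\in\Sc$ makes $\TT$ its inverse on $\Sh=\RR(\KK)$. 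These are exactly the defining properties of the Moore--Penrose inverse, so $\KK^\dagger=\TT$, and the remaining relations $\RR(\TT)=\RR(\KK^*)=\Sc$ and $\NN(\TT)=\NN(\KK^*)=\RR(\Diag)$ drop out of the general pseudoinverse identities $\RR(A^\dagger)=\RR(A^*)$ and $\NN(A^\dagger)=\NN(A^*)$.

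I expect the main obstacle to be organizational rather than deep: the containments $\RR(\KK)\subseteq\Sh$ and $\RR(\TT)\subseteq\Sc$ are easy, but upgrading them to equalities requires the dimension counts to close exactly, and correctly identifying the complements $\Sh^\perp=\RR(\Diag)$ and $\RR(\DD_e)^\perp=\Sc$ must be done with care. Once the projection identity $\TT\KK(Y)=JYJ=P_{\Sc}(Y)$ is secured via $Je=0$, everything else is bookkeeping through the geometric characterization of $\KK^\dagger$.
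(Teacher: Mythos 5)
Your proof is correct, but there is nothing in the paper to compare it against: the paper states this proposition with a citation to an external reference and gives no proof of its own. Your argument is a sound, self-contained verification. The computation of $\NN(\KK)=\RR(\DD_e)$ (solving $\KK(Y)=0$ for $Y$ and checking diagonal consistency) and the dimension count $\dim\RR(\KK)=\binom{n+1}{2}-n=\dim\Sh$ correctly upgrade the easy containment $\RR(\KK)\subseteq\Sh$ to equality; the identifications $\Sh^\perp=\RR(\Diag)$ and $\RR(\DD_e)^\perp=\Sc$ under the trace inner product are right and immediately give both orthogonal decompositions of $\Sn$ as well as the adjoint relations via $\RR(\KK^*)=\NN(\KK)^\perp$ and $\NN(\KK^*)=\RR(\KK)^\perp$. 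The heart of your argument, the identity $\TT\KK(Y)=JYJ$ (using hollowness of $\KK(Y)$ and $Je=0$) together with $\TT(\RR(\Diag))=\{0\}$, shows that $\TT$ vanishes on $\RR(\KK)^\perp$ and, on $\RR(\KK)=\Sh$, inverts the bijection $\KK|_{\Sc}\colon\Sc\to\Sh$ with values in $\NN(\KK)^\perp$; by linearity and $\Sn=\Sh\oplus\RR(\Diag)$ this pins $\TT$ down as exactly the Moore--Penrose inverse, which is a legitimate alternative to checking the four Penrose equations. The final identities $\RR(\TT)=\RR(\KK^*)$ and $\NN(\TT)=\NN(\KK^*)$ do indeed follow from the general facts $\RR(A^\dagger)=\RR(A^*)$ and $\NN(A^\dagger)=\NN(A^*)$. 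One small stylistic remark: when you argue $\Sc\cap\NN(\KK)=\{0\}$, what you actually show is that $(ye^T+ey^T)e=0$ forces $y=0$ and hence the matrix itself is zero; this is what is needed, and it is worth phrasing it that way since $y$ is not a priori determined by the matrix.
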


\begin{thm}(\cite{homwolkA:04})
\label{thm:KTonetoone}
The linear operators $\TT$ and $\KK$ are one-to-one and onto mappings between the cone $\En \subset \Sh$ and the face of the semidefinite cone $\Snp \! \cap \Sc$.  That is, 
\[ \TT(\En) = \Snp \! \cap \Sc 
   \quad \mbox{and} \quad 
   \KK(\Snp \! \cap \Sc) = \En. \]
\end{thm}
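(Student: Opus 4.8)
The plan is to leverage Proposition~\ref{prop:KTonetoone}, which already establishes that $\KK$ and $\TT$ are mutually inverse isomorphisms between the subspaces $\Sc$ and $\Sh$: since $\KK^\dagger=\TT$, the restriction $\KK|_{\Sc}\colon\Sc\to\Sh$ is a linear bijection with inverse $\TT|_{\Sh}\colon\Sh\to\Sc$, and $\KK\TT$ restricts to the identity on $\Sh$ while $\TT\KK$ restricts to the identity on $\Sc$. Because $\En\subseteq\Sh$ and $\Snp\cap\Sc\subseteq\Sc$, the theorem reduces to checking that this fixed isomorphism carries the cone $\Snp\cap\Sc$ exactly onto the cone $\En$; the ``one-to-one and onto'' conclusion then comes for free from the injectivity of $\KK|_{\Sc}$ and $\TT|_{\Sh}$ as subspace isomorphisms.

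The core of the argument is two inclusions, which together encode Schoenberg's characterization. First I would show $\KK(\Snp\cap\Sc)\subseteq\En$: given $Y\in\Snp$, factor $Y=PP^T$ and read off from the chain of equalities in~\eqref{KK} that $\KK(Y)=\bigl(\|p_i-p_j\|_2^2\bigr)_{i,j}$, where $p_1^T,\dots,p_n^T$ are the rows of $P$, which is by definition a Euclidean distance matrix. Second I would show $\TT(\En)\subseteq\Snp\cap\Sc$. Here the key computation is the identity $\TT(\KK(Y))=JYJ$ for every $Y\in\Sn$: since $\KK(Y)$ already has zero diagonal one has $\offDiag(\KK(Y))=\KK(Y)$, and using $Je=0$ the two rank-one terms $\diag(Y)e^T$ and $e\diag(Y)^T$ are annihilated on both sides by $J$, leaving $-\tfrac12 J(-2Y)J=JYJ$. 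Applying this to an EDM $D=\KK(PP^T)$ gives $\TT(D)=JPP^TJ=(JP)(JP)^T\succeq 0$, and membership in $\Sc$ is automatic from $\RR(\TT)=\Sc$ in~\eqref{eq:RN1}.

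Finally I would assemble the equalities by transporting the two inclusions through the inverse relations. Applying $\KK$ to the second inclusion and using $\KK\TT=\mathrm{id}$ on $\Sh\supseteq\En$ gives $\En=\KK(\TT(\En))\subseteq\KK(\Snp\cap\Sc)$, which with the first inclusion yields $\KK(\Snp\cap\Sc)=\En$. Symmetrically, applying $\TT$ to the first inclusion and using $\TT\KK=\mathrm{id}$ on $\Sc\supseteq\Snp\cap\Sc$ gives $\Snp\cap\Sc=\TT(\KK(\Snp\cap\Sc))\subseteq\TT(\En)$, which with the second inclusion yields $\TT(\En)=\Snp\cap\Sc$. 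Since $\KK|_{\Sc}$ and $\TT|_{\Sh}$ are injective, their restrictions to $\Snp\cap\Sc$ and $\En$ are one-to-one and onto, as claimed.

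The step I expect to be the real obstacle, as opposed to bookkeeping, is the positive-semidefiniteness correspondence captured by the two inclusions. The forward direction $\KK(\Snp\cap\Sc)\subseteq\En$ is immediate once $Y$ is factored, but the subtle direction is $\TT(\En)\subseteq\Snp$; the clean route through it is the identity $\TT\KK(Y)=JYJ$, after which the factorization $(JP)(JP)^T$ makes semidefiniteness transparent. Everything else is just carrying those two inclusions across the isomorphism recorded in Proposition~\ref{prop:KTonetoone}.
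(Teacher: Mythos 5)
Your proposal is correct, but there is nothing in the paper to compare it against: Theorem~\ref{thm:KTonetoone} is quoted from \cite{homwolkA:04} and the paper supplies no proof of it, so your argument stands as a self-contained reconstruction. Checking your steps: the inclusion $\KK(\Snp\cap\Sc)\subseteq\En$ does follow immediately from factoring $Y=PP^T$ and reading \eqref{KK}; the identity $\TT(\KK(Y))=JYJ$ is valid exactly for the reasons you give ($\KK(Y)$ has zero diagonal, so $\offDiag$ acts as the identity on it, and $Je=0$ annihilates both rank-one terms of $\DD_e(\diag Y)$), which yields $\TT(D)=(JP)(JP)^T\succeq 0$ and, with $\RR(\TT)=\Sc$ from \eqref{eq:RN1}, the inclusion $\TT(\En)\subseteq\Snp\cap\Sc$; and the transport step is legitimate because $\KK\TT$ and $\TT\KK$ are the orthogonal projections onto $\RR(\KK)=\Sh$ and $\RR(\TT)=\Sc$ respectively (a fact the paper itself invokes later, in the proof of Theorem~\ref{thm:twocliquered}, where $\KK^\dagger\KK(\cdot)=J(\cdot)J$ is used), hence they restrict to the identity on $\En\subseteq\Sh$ and on $\Snp\cap\Sc\subseteq\Sc$. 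Your route --- two cone inclusions encoding Schoenberg's characterization, then transporting them through the generalized-inverse relations of Proposition~\ref{prop:KTonetoone} --- is essentially the classical Schoenberg/Gower argument that the cited reference formalizes, so there is no substantive divergence to flag; the one point worth making explicit in a final write-up is that injectivity of $\KK|_{\Sc}$ follows from $\Sc\cap\NN(\KK)=\Sc\cap\RR(\DD_e)=\{0\}$, which is exactly the direct-sum decomposition \eqref{eq:RN3}.
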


\begin{rem}
$D \in \En$ has embedding dimension $r$ if and only if
\index{embedding dimension ({\em fixed}), $r$}
$\KK^\dagger (D)\succeq 0$ and
$\rank\!(\KK^\dagger (D))=r$. In addition, we get $\KK^\dagger (D) e=0$.
Therefore, we can factor $\KK^\dagger (D) =PP^T$, 
for some $P \in \MM^{n\times r}$, to recover the 
(centered) sensors in $\R^r$ from the rows in $P$.
Note that rotations of the points in the rows of $P$ do not change the
value $Y=PP^T$, since $PP^T=PQ^TQP$ if $Q$ is orthogonal.
However, the nullspace of $\KK$ is related to translations of the points in $P$.
Let $D \in \En$ with embedding dimension $r$ and 
let $Y:=\KK^{\dagger}(D)$ have full rank factorization $Y=PP^T$, with $P
\in \MM^{n \times r}$.
Then the translation of points in the rows of $P$ to $\bar P:=P+ew^T$, for 
some $w\in \R^r$, results in $\bar Y := \bar P \bar P^T = Y+\DD_e(y)$, with
$y:=Pw+\frac {w^Tw}2 e$, and 
$\KK(\bar Y)=\KK(Y)=D$, since $\DD_e(y)\in \NN(\KK)$.
Note that $\RR(Y)=\RR(P)$, therefore 
$y=Pw+\frac {w^Tw}2 e \in \RR(Y) + \cone \{e\}$,
as we will also see in more generality in Lemma~\ref{lem:psdDe} below.
\end{rem}

Let $D_p \in \Ss^n$ be a \emph{partial} \EDM with embedding dimension $r$
and let $W \in \Ss^n$ be the $0$--$1$ matrix corresponding to the known entries of $D_p$.
One can use the substitution $D=\KK(Y)$, where $Y\in \Snp \cap \Ss_C$,
in the \EDM completion problem 
\[
	\begin{array}{cc}
	\mbox{Find} & D \in \EE^n \\
	\mbox{s.t.} & W \circ D = W \circ D_p
	\end{array}
\]
to obtain the \SDP relaxation
\[
	\begin{array}{cc}
	\mbox{Find} & Y \in \Ss^n_+ \cap \Ss_C \\
	\mbox{s.t.} & W \circ \KK(Y) = W \circ D_p
	\end{array}.
\]
This relaxation does not restrict the rank of $Y$ and may
yield a solution with embedding dimension that is too large, 
if $\rank (Y) > r$.
Moreover, solving \SDP problems with rank restrictions is NP-hard.
However, we work on faces of $\Snp$ described by $U\Ss_+^tU^T$, with $t \leq n$.
In order to find the face with the smallest dimension $t$, we must have the correct knowledge of the matrix $U$. 
In this paper, we obtain information on $U$ using the cliques in the 
graph of the partial \EDM.

\section{Semidefinite Facial Reduction}
\label{sect:cliqred}

We now present several techniques for reducing an \EDM completion problem when
one or more (possibly intersecting) cliques are known.
This extends the reduction using disjoint cliques presented in
\cite{DiKrQiWo:06,DiKrQiWo:08}. In each case, we take advantage of the
loss of Slater's constraint qualification and project the problem
to a lower dimensional \SDP cone.

We first need the following two technical lemmas 
that exploit the structure of the \SDP cone.
\begin{lem}
\label{lem:psdDe}
Let $B \in \Sn$, $Bv=0$, $v \neq 0$, $y \in \Rn$ and $\bar Y := B+\DD_v(y)$.
If $\bar Y \succeq 0$, then 
\[
        y \in \RR(B) + \cone \{v\}.
\]
\end{lem}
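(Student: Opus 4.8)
The plan is to decompose $y$ relative to the orthogonal splitting induced by the symmetric matrix $B$, and then read off the two required constraints from the positive semidefiniteness of $\bar Y$. Since $B$ is symmetric, $\Rn = \RR(B) \oplus \NN(B)$ orthogonally, and since $v \in \NN(B)$ we can refine this to $\Rn = \RR(B) \oplus \spanl\{v\} \oplus N'$, where $N' := \{z \in \NN(B) : v^T z = 0\}$. Accordingly I would write $y = y_R + \alpha v + w$ with $y_R \in \RR(B)$, $\alpha \in \R$, and $w \in N'$. The goal then splits into two claims: that $w = 0$ (so $y$ has no null-space component transverse to $v$) and that $\alpha \ge 0$; together these give exactly $y \in \RR(B) + \cone\{v\}$.

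The core computation rests on the identity $z^T \bar Y z = z^T B z + 2(z^T y)(z^T v)$, valid for every $z$, obtained by expanding $\DD_v(y) = yv^T + vy^T$. The crux step, which I expect to be the main obstacle, is showing $w = 0$. Evaluating the form at $z = w$ kills both terms: $w^T B w = 0$ because $w \in \NN(B)$, and the cross term vanishes because $w^T v = 0$, so $w^T \bar Y w = 0$. I would then invoke the standard fact that a positive semidefinite matrix annihilates any vector on which its quadratic form vanishes (from $\bar Y \succeq 0$ and $w^T \bar Y w = 0$ we get $\bar Y w = 0$) and compute $\bar Y w$ directly. Every contribution drops out by orthogonality — $Bw = 0$, $v^T w = 0$, and $y_R^T w = 0$ since $y_R \in \RR(B)$ while $w \in \NN(B)$ — except the term $v\,(w^T w)$, leaving $\bar Y w = \|w\|^2\, v$. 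Since $v \neq 0$ and $\bar Y w = 0$, this forces $\|w\| = 0$, hence $w = 0$.

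Finally, with $w = 0$ I have $y = y_R + \alpha v$, and testing positive semidefiniteness at $z = v$ yields $v^T \bar Y v = 2(v^T y)(v^T v) = 2\alpha \|v\|^4 \ge 0$, where I used $v^T y_R = 0$ and $v^T B v = 0$; since $v \neq 0$ this gives $\alpha \ge 0$. Thus $y = y_R + \alpha v$ with $y_R \in \RR(B)$ and $\alpha \ge 0$, i.e. $y \in \RR(B) + \cone\{v\}$, as claimed. The only delicate points are the orthogonality bookkeeping in the expansion of $\bar Y w$ and the appeal to the positive-semidefinite zero-quadratic-form fact; neither uses the Slater-failure machinery of the surrounding sections, so the lemma is entirely self-contained.
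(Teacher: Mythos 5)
Your proof is correct, and while it uses the same orthogonal decomposition as the paper ($y = y_R + \alpha v + w$ with $y_R \in \RR(B)$ and $w \in \NN(B) \cap \{v\}^\perp$ — the paper writes this as $y = Bu + \beta v + \bar y$), the crux step is handled by a genuinely different mechanism. The paper argues by contradiction: assuming $\bar y \neq 0$, it constructs the mixed test vector $z = \frac{1}{2}\frac{v}{\|v\|^{2}} - (1+|\beta|)\frac{\bar y}{\|\bar y\|^{2}}$, whose coefficients are tuned so that $z^T \bar Y z$ comes out strictly negative, contradicting $\bar Y \succeq 0$. You instead test directly at $z = w$, observe that $w^T \bar Y w = 0$ by orthogonality, and invoke the standard fact that a positive semidefinite matrix annihilates any vector with vanishing quadratic form; computing $\bar Y w = \|w\|^{2} v$ then forces $w = 0$ since $v \neq 0$. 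Your route avoids both the contradiction framing and the delicate coefficient choice (the $\frac12$ and $(1+|\beta|)$ factors and the ensuing inequality chain), at the cost of appealing to the kernel fact for PSD matrices — which is elementary (write $\bar Y = \bar Y^{1/2}\bar Y^{1/2}$) but is an extra ingredient the paper does not need. The final step, establishing $\alpha \geq 0$ by evaluating $v^T \bar Y v = 2\alpha\|v\|^{4} \geq 0$, is identical in both proofs. One could argue your version is the more robust one: it isolates exactly what positive semidefiniteness is used for (kernel containment), whereas the paper's construction obscures this behind an ad hoc test vector.
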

\begin{proof}
First we will show that $y \in \RR(B) + \sspan\{v\} = \RR\left( \begin{bmatrix} B & v \end{bmatrix} \right)$.  If this is not the case, then $y$ can be written as the orthogonal decomposition 
\[ 
	y = B u + \beta v + \bar y,
\]
where $0 \neq \bar y  \in \RR\left( \begin{bmatrix} B & v \end{bmatrix} \right)^\perp = \NN\left( \begin{bmatrix} B & v \end{bmatrix}^{T} \right)$.  
Note that $\bar y$ satisfies $B \bar y = 0$ and $v^{T} \bar y = 0$.  To get a contradiction with the assumption that $\bar Y \succeq 0$, we let 
\[
	z := \frac{1}{2}\frac{v}{\|v\|^{2}} - (1+|\beta|)\frac{\bar y}{\|\bar y\|^{2}},
\]
and observe that $Bz = 0$ and $v^{T}z = 1/2$.  Then, 
\[
\begin{array}{rcl}
z^T\bar Y z 
&=&
 z^T \DD_v(y) z
\\&=&
 z^T \left( yv^T +vy^T \right) z
\\&=&
 y^{T} z
\\&=&
 \frac{1}{2}\beta + \bar y^{T} z
\\&<&
 \frac{1}{2}(1+|\beta|) + \bar y^Tz
\\&=&
 -\frac{1}{2}(1+|\beta|)
\\&<& 0,
\end{array}
\]
which gives us the desired contradiction.  Therefore, $y \in \RR(B) + \sspan\{v\}$, so to show that $y \in \RR(B) + \cone\{v\}$, we only need to show that if $y = B u + \beta v$, then $\beta \geq 0$.  First note that $v^{T}y = \beta v^{T}v$.  Then,
\[
\begin{array}{rcl}
v^T \bar Y v 
&=&
 v^T \left( yv^T +vy^T \right) v
\\&=&
 2 v^{T}y v^{T}v
\\&=&
 2 \beta (v^{T}v)^{2}.
\end{array}
\]
Since $\bar Y \succeq 0$, we have $2 \beta (v^{T}v)^{2} \geq 0$. This
implies that $\beta \geq 0$, since $v \neq 0$.
\end{proof}

If $\bar Y \in \Skp$, then we can use the minimal face of $\Skp$ 
containing $\bar Y$ to find an expression for the minimal face of 
$\Snp$ that contains $\Skpyb$.

\begin{lem}
\label{lem:rangeYbar}
Let $\bar U \in \MM^{k\times t}$ with $\bar U^T \bar U=I_t$.  
If $\face \{\bar Y\} \unlhd \bar U \Stp \bar U^{T}$, then
\beq
\label{eq:facereprYsub}
\face \Skpyb \unlhd 
\begin{bmatrix} \bar U & 0 \cr 0 & I_{n-k} \end{bmatrix} 
\Ss^{n-k+t}_+
\begin{bmatrix} \bar U & 0 \cr 0 & I_{n-k} \end{bmatrix}^T.
\eeq
Furthermore, if $ \face \{\bar Y\} = \bar U \Stp \bar U^{T}$,
then 
\beq
\label{eq:facereprY}
\face \Skpyb = 
\begin{bmatrix} \bar U & 0 \cr 0 & I_{n-k} \end{bmatrix} 
\Ss^{n-k+t}_+
\begin{bmatrix} \bar U & 0 \cr 0 & I_{n-k} \end{bmatrix}^T.
\eeq
\end{lem}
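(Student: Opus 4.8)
The plan is to translate every ``$\unlhd$'' relation into an inclusion of ranges, using the face--range correspondence \eqref{eq:facerepr}: a face of $\Snp$ is exactly the set of positive semidefinite matrices whose range lies in a fixed subspace, and two such faces are nested precisely when the corresponding subspaces are nested. Accordingly, I would set $V := \begin{bmatrix} \bar U & 0 \\ 0 & I_{n-k} \end{bmatrix}$, so that $V^T V = I_{n-k+t}$ (using $\bar U^T \bar U = I_t$) and $V\Ss^{n-k+t}_+V^T$ is the face of $\Snp$ with range $\RR(V) = \RR(\bar U)\oplus\R^{n-k}$; a short computation with the projection $VV^T$ shows this face equals $\{Y \in \Snp : \RR(Y)\subseteq\RR(V)\}$. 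The hypothesis $\face\{\bar Y\}\unlhd \bar U\Stp\bar U^{T}$ likewise says exactly that $\RR(\bar Y)\subseteq\RR(\bar U)$.

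For the inclusion \eqref{eq:facereprYsub}, I would take an arbitrary $Y \in \Skpyb$, written in block form $Y = \begin{bmatrix} \bar Y & B \\ B^T & C \end{bmatrix}$ with $Y\succeq 0$ and top-left block fixed to $\bar Y$. The key step is the standard positive semidefinite range condition $\RR(B)\subseteq\RR(\bar Y)$, valid because $Y\succeq 0$. Combining this with the hypothesis $\RR(\bar Y)\subseteq\RR(\bar U)$, the top blocks of all columns of $Y$ lie in $\RR(\bar U)$ while the bottom blocks are unrestricted in $\R^{n-k}$; hence $\RR(Y)\subseteq\RR(V)$, i.e. $Y \in V\Ss^{n-k+t}_+V^T$. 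Thus $\Skpyb \subseteq V\Ss^{n-k+t}_+V^T$, and since the right-hand side is a face of $\Snp$ while $\face\Skpyb$ is by definition the smallest face containing $\Skpyb$, the inclusion \eqref{eq:facereprYsub} follows.

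For the equality \eqref{eq:facereprY} I would use the strengthened hypothesis $\RR(\bar Y)=\RR(\bar U)$ and produce a single matrix in $\Skpyb$ whose range is all of $\RR(V)$, which by \eqref{eq:facerepr} pins down its face. The natural candidate is $Y^* := \begin{bmatrix} \bar Y & 0 \\ 0 & I_{n-k} \end{bmatrix}$: it is positive semidefinite, has top-left block $\bar Y$ (so $Y^* \in \Skpyb$), and $\RR(Y^*) = \RR(\bar Y)\oplus\R^{n-k} = \RR(\bar U)\oplus\R^{n-k} = \RR(V)$. Hence $\face\{Y^*\} = V\Ss^{n-k+t}_+V^T$, and since $Y^* \in \Skpyb \subseteq \face\Skpyb$, this gives the reverse inclusion $V\Ss^{n-k+t}_+V^T \unlhd \face\Skpyb$; together with the first part, equality. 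I expect the only genuinely substantive point to be the range condition $\RR(B)\subseteq\RR(\bar Y)$ linking the fixed block $\bar Y$ to the global range of $Y$; everything else is bookkeeping through the face--range dictionary of \eqref{eq:facerepr}.
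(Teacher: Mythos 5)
Your proof is correct and takes essentially the same route as the paper: the inclusion \eqref{eq:facereprYsub} rests in both cases on showing that every $Y \in \Skpyb$ has range contained in $\RR\left(\begin{bmatrix} \bar U & 0 \cr 0 & I_{n-k} \end{bmatrix}\right)$ (the paper derives the off-diagonal range condition inline via a congruence with the orthogonal complement $\bar V$ of $\bar U$, where you cite it as the standard PSD block fact $\RR(B)\subseteq\RR(\bar Y)$), and the equality \eqref{eq:facereprY} is established with exactly the same witness, since your $Y^* = \begin{bmatrix} \bar Y & 0 \cr 0 & I_{n-k} \end{bmatrix}$ coincides with the paper's $\hat Y = U\begin{bmatrix} S & 0 \cr 0 & I_{n-k}\end{bmatrix}U^T$. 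The only cosmetic difference is that the paper certifies maximality of the witness by placing it in the relative interior of the candidate face, while you compute the face it generates via the face--range dictionary; these are the same argument in different language.
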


\begin{proof}
Since $\bar Y \in \bar U \Stp \bar U^{T}$, then $\bar Y = \bar U S \bar U^T$,
for some $S \in \Stp$.
Let $Y \in \Skpyb$ and choose $\bar V$ so that 
$\begin{bmatrix}\bar U & \bar V\end{bmatrix}$ is an orthogonal matrix. 
Then, with $Y$ blocked appropriately, we evaluate the congruence
\[
0\preceq \begin{bmatrix} \bar V & 0 \cr 0  &I_{n-k} \end{bmatrix}^T
Y \begin{bmatrix} \bar V & 0 \cr 0 & I_{n-k} \end{bmatrix} =
\begin{bmatrix} 0 & \bar V^T Y_{21}^T\cr   Y_{21} \bar V &  Y_{22} \end{bmatrix}=
\begin{bmatrix} 0 & 0 \cr   0 &  Y_{22} \end{bmatrix}.
\]
Therefore, $Y\succeq 0$ implies that $\bar V^TY_{21}^T =0$.
Since $\NN(\bar V^T)= \RR(\bar U)$, we get $Y_{21}^T=\bar U X$, for some $X$.  
Therefore, we can write 
\[
Y= 
\begin{bmatrix} \bar U & 0 \cr 0 & I_{n-k} \end{bmatrix} 
\begin{bmatrix} S & X \cr X^T &  Y_{22} \end{bmatrix}
\begin{bmatrix} \bar U & 0 \cr 0 & I_{n-k} \end{bmatrix}^T.
\]
This implies that $\face \Skpyb \unlhd U \Ss^{n-k+t}_+ U^T$, where
\[
U :=
\begin{bmatrix} \bar U & 0 \cr 0 & I_{n-k} \end{bmatrix}.
\]
This proves \eqref{eq:facereprYsub}. To prove \eqref{eq:facereprY},
note that if $ \face \{\bar Y\} = \bar U \Stp \bar U^{T}$ then
$ \bar Y \in \relint  \left( \bar U \Stp \bar U^{T}\right)$, so
$\bar Y = \bar U S \bar U^{T}$, for some $S \in \Stpp$.  Letting
\[
\hat{Y} :=
\begin{bmatrix} \bar U & 0 \cr 0 & I_{n-k} \end{bmatrix}
\begin{bmatrix} S & 0 \cr 0 & I_{n-k} \end{bmatrix}
\begin{bmatrix} \bar U & 0 \cr 0 & I_{n-k} \end{bmatrix}^{T},
\]
we see that $\hat{Y} \in \Skpyb \cap 
\relint  \left( U \Ss^{n-k+t}_+ U^T \right)$.
This implies that there is no smaller face of $\Snp$ containing 
$\Skpyb$, completing the proof.
\end{proof}

\subsection{Single Clique Facial Reduction}
\label{sect:singleclique}

If the principal submatrix $\bar D \in \EE^k$ is given, 
for index set $\alpha \subseteq 1\!:\!n$, with $|\alpha| = k$,
we define 
\beq
\label{eq:setS}
\Eayb:= \left\{ D \in \En : D[\alpha] = \bar D \right\}.
\eeq
\index{$\Eayb$, principal submatrix of \EDM}
\index{principal submatrix of \EDM, $\Eayb$}
Similarly, the subset of matrices
in $\En$ with the top left $k\times k$ block fixed is
\beq
\label{eq:setEblock}
\Ekyb = \left\{D \in \En : D = 
\left[
\begin{array}{c|c}
 \bar D & \cdot \cr
 \hline 
 \cdot & \cdot
\end{array}
\right]
\right\}.
\index{top-left block fixed, $\Ekyb$}
\index{principal submatrix top-left block, $\Ekyb$}
\eeq

A fixed principal submatrix $\bar D$ in a partial \EDM $D$
corresponds to a clique $\alpha$ in the graph $\GG$ of the 
partial \EDM $D$.
Given such a fixed clique defined by the submatrix $\bar D$, 
the following theorem shows that the following set, containing the feasible set 
of the corresponding \SDP relaxation,
\[
\left\{ 
Y \in \Snp \cap \Sc : 
\KK(Y[\alpha]) = \bar D 
\right\} 
= 
\KK^\dagger \left(\Eayb \right),
\]
is contained in a proper face of $\Snp$.
This means that the Slater
constraint qualification (strict feasibility) fails, and we can reduce
the size of the \SDP problem; see \cite{DiKrQiWo:06}.
We expand on this and find an explicit expression for
$\face \KK^\dagger \left(\Eayb \right)$ in the following Theorem~\ref{thm:onecliquered}.
For simplicity, here and below, we often work with
ordered sets of integers for the two cliques. This simplification
can always be obtained by a permutation of the indices of the
sensors.

\begin{thm}
\label{thm:onecliquered}
Let $D \in \En$, with embedding dimension $r$. 
Let $\bar D := D[1\!:\!k] \in \Ek$ with embedding dimension $t$,
and $B := \KK^\dagger(\bar D) = \bar U_B S \bar U_B^T$,
where $\bar U_B \in \MM^{k\times t}$, $\bar U_B^T \bar U_B = I_t$,
and $S \in \Ss^t_{++}$. 
Furthermore, let $U_B := 
\begin{bmatrix} 
\bar U_B & \frac 1{\sqrt{k}} e 
\end{bmatrix}
\in \MM^{k\times (t+1)}$,  
$U := \begin{bmatrix} U_B & 0 \cr 0 & I_{n-k}  \end{bmatrix}$,
and let $\begin{bmatrix} V & \frac{U^Te}{\|U^Te\|}  \end{bmatrix}
\in \MM^{n-k+t+1}$ be orthogonal.
Then
\beq
\label{eq:UBY}
\face \KK^\dagger\left(\Ekyb\right) 
=\left( U \Ss_+^{n-k+t+1} U^T \right) \cap \Sc
= (U V) \Ss_+^{n-k+t} (U V)^T.
\eeq
\end{thm}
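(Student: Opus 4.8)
The plan is to establish the two equalities separately. The second one, $(U \Ss_+^{n-k+t+1} U^T)\cap\Sc = (UV)\Ss_+^{n-k+t}(UV)^T$, is pure linear algebra and also exhibits the right-hand side as a genuine face $F$ of $\Snp$; the first, $\face\KK^\dagger(\Ekyb)=F$, then follows from a two-sided inclusion. Before starting I would record the structural facts that make the pieces fit: since $B=\KK^\dagger(\bar D)$ is centred, $\bar U_B^T e=0$, so $\tfrac{1}{\sqrt k}e$ is a unit vector orthogonal to $\RR(\bar U_B)$; hence $U_B^TU_B=I_{t+1}$, and therefore $U^TU=I$ and $(UV)^T(UV)=I$. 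A direct computation shows $w:=U^Te\ne 0$, and since $\begin{bmatrix} V & w/\|w\|\end{bmatrix}$ is orthogonal, the columns of $V$ form an orthonormal basis of $\{w\}^\perp$.

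For the second equality I would note that for $Z\succeq 0$ we have $UZU^T\in\Sc$ iff $UZU^Te=0$, and since $U$ is injective this is equivalent to $Zw=0$, i.e.\ $w\in\NN(Z)$. For positive semidefinite $Z$ this holds exactly when $Z=V\tilde Z V^T$ with $\tilde Z\in\Ss_+^{n-k+t}$, and substituting gives $UZU^T=(UV)\tilde Z(UV)^T$. This proves the identity and shows the right-hand side is the face $F$ of $\Snp$ determined by $\RR(UV)=\RR(U)\cap\{e\}^\perp$, so that $\relint F=\{Y\succeq 0 : \RR(Y)=\RR(UV)\}$.

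For the inclusion $\KK^\dagger(\Ekyb)\subseteq F$, take $D'\in\Ekyb$ and set $Y':=\KK^\dagger(D')$, which lies in $\Snp\cap\Sc$ with $\KK(Y')=D'$ by Theorem~\ref{thm:KTonetoone}. Since the $(1\!:\!k)$ block of $\KK(Y')$ depends only on $Y'[1\!:\!k]$ and equals $\KK_k(Y'[1\!:\!k])$, with $\KK_k$ the analogue of $\KK$ on $\Ss^k$, the clique constraint gives $\KK_k(Y'[1\!:\!k])=\bar D=\KK_k(B)$, so $Y'[1\!:\!k]=B+\DD_e(y)$ for some $y\in\R^k$ (as $\NN(\KK_k)=\RR(\DD_e)$). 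Because $Y'[1\!:\!k]\succeq 0$ and $Be=0$, Lemma~\ref{lem:psdDe} gives $y\in\RR(B)+\cone\{e\}$, hence $\RR(Y'[1\!:\!k])\subseteq\RR(\bar U_B)+\span\{e\}=\RR(U_B)$; that is, $Y'[1\!:\!k]\in U_B\Ss_+^{t+1}U_B^T$. Applying Lemma~\ref{lem:rangeYbar} with $\bar U:=U_B$ and block size $t+1$ to the positive semidefinite $Y'$ then yields $Y'\in U\Ss_+^{n-k+t+1}U^T$, and with $Y'\in\Sc$ this gives $Y'\in F$. Thus $\face\KK^\dagger(\Ekyb)\subseteq F$.

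The reverse inclusion is where the real work lies: I must produce a completion whose image under $\KK^\dagger$ lands in $\relint F$. I would build it geometrically. Realize $\bar D$ by centred points $q_1,\dots,q_k\in\R^t$ (Gram matrix $B$), embed them as $p_i:=(q_i,0)\in\R^t\times\R^{n-k}$, and place the remaining points along fresh coordinate directions, $p_{k+j}:=(0,ce_j)$ with $c>0$. The resulting \EDmt $\hat D$ lies in $\Ekyb$, and an affine-span count shows its embedding dimension is exactly $t+(n-k)=n-k+t$, so $\hat Y:=\KK^\dagger(\hat D)$ has $\rank\hat Y=n-k+t$. By the inclusion already proved, $\RR(\hat Y)\subseteq\RR(UV)$, and since $\dim\RR(UV)=n-k+t$ as well, equality holds; hence $\hat Y\in\relint F\cap\KK^\dagger(\Ekyb)$, so $F=\face\{\hat Y\}\subseteq\face\KK^\dagger(\Ekyb)$, and the two equalities follow. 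The step needing the most care is verifying that the constructed configuration has affine dimension exactly $n-k+t$ — the distinct extra coordinates ensure each of the last $n-k$ points raises the affine dimension by one — since the whole argument turns on matching $\rank\hat Y$ with $\dim\RR(UV)$.
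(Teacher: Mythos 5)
Your proof is correct, and its first half is essentially the paper's own argument: the inclusion $\KK^\dagger(\Ekyb)\subseteq (U\Ss_+^{n-k+t+1}U^T)\cap\Sc$ is obtained exactly as in the paper, via the decomposition $Y'[1\!:\!k]=B+\DD_e(y)$, Lemma~\ref{lem:psdDe}, and Lemma~\ref{lem:rangeYbar}, and your careful derivation of the identity $(U\Ss_+^{n-k+t+1}U^T)\cap\Sc=(UV)\Ss_+^{n-k+t}(UV)^T$ just fills in what the paper asserts directly from $V^TU^Te=0$. Where you genuinely diverge is the reverse inclusion, i.e.\ the production of a maximal-rank witness $\hat Y$. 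The paper does this algebraically: it builds an explicit $P\in\MM^{n\times(n-k+t)}$ whose last row $(-e^T\bar P,\,-e^T)$ enforces centering, writes $\hat Y=PP^T=UZU^T$ with an explicit $Z$, and verifies $Z\succeq 0$ and $\rank(Z)=n-k+t$ by computing the eigenvalues of a block $T$ (they are $0,1,n$). You instead argue geometrically: realize $\bar D$ by centered points in $\R^t$, append the remaining $n-k$ points along fresh coordinate axes, count affine dimension to get embedding dimension exactly $n-k+t$ (hence $\rank\hat Y=n-k+t$ and $\hat Y e=0$ automatically, via $\KK^\dagger$), and then, rather than verifying membership in the face by hand, reuse the already-proved forward inclusion to get $\RR(\hat Y)\subseteq\RR(UV)$ and conclude equality of ranges by dimension matching, so $\hat Y\in\relint\bigl((UV)\Ss_+^{n-k+t}(UV)^T\bigr)$. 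Your route is shorter and conceptually cleaner, avoiding all explicit matrix and eigenvalue computations; the paper's route buys an explicit closed-form representation $\hat Y = UZU^T$ (in the spirit of the algorithmic use of these faces later in the paper), and in fact its matrix $P$ encodes the same geometric picture as yours, with the centering done by an extra point rather than by $\KK^\dagger$. Both arguments rest on the facial characterization \eqref{eq:facerepr}, and your dimension-counting step --- the one place needing care --- is sound, since each appended point $(0,ce_j)$ lies outside the affine hull of its predecessors.
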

\begin{proof}
Let $Y \in \KK^\dagger\left(\Ekyb\right)$ and $\bar Y := Y[1\!:\!k]$.
Then there exists $D \in \Ekyb$ such that $Y = \KK^\dagger(D)$, 
implying that $\KK(Y) = D$, and that $\KK(\bar Y) = \bar D = \KK(B)$.
Thus, $\bar Y \in B + \NN(\KK) = B + \RR(\DD_e)$, 
where the last equality follows from Proposition~\ref{prop:KTonetoone}.  
This implies that $\bar Y = B + \DD_{e}(y)$, for some $y \in \Rk$.
From Theorem~\ref{thm:KTonetoone}, we get $\bar Y \succeq 0$ and $Be=0$.
Therefore, Lemma~\ref{lem:psdDe} implies that $y = Bu + \beta e$, 
for some $u \in \Rk$ and $\beta \geq 0$. This further implies
\[
\bar Y = B + Bue^T + eu^TB + 2\beta ee^T.
\]
From this expression for $\bar Y$, we can see that
$\RR(\bar Y) \subseteq 
\RR \left(\begin{bmatrix} B & e \end{bmatrix}\right) 
= \RR(U_B)$, where the last equality follows from the fact that $Be = 0$.
Therefore, $\bar Y \in U_{B} \Ss^{t+1}_+ U_{B}^{T}$, 
implying, by Lemma~\ref{lem:rangeYbar}, that 
$\face \Skpyb \unlhd U \Ss^{n-k+t+1}_+ U^T$.
Since $Y \in \Skpyb$ and $Ye = 0$, 
we have that $Y \in \left( U \Ss_+^{n-k+t+1} U^T \right) \cap \Sc$.  
Therefore, $\face \KK^\dagger\left(\Ekyb\right) 
\unlhd \left( U \Ss_+^{n-k+t+1} U^T \right) \cap \Sc$.
Since $V^T U^T e = 0$, we have that 
\beq
\label{eq:UVdotUVT}
\left( U \Ss_+^{n-k+t+1} U^T \right) \cap \Sc 
= U V \Ss_+^{n-k+t} V^T U^T.
\eeq
To show that $\face \KK^\dagger\left(\Ekyb\right) 
= \left( U \Ss_+^{n-k+t+1} U^T \right) \cap \Sc$, we need to find 
\beq
\label{eq:Yhat}
\hat Y = UZU^T \in \KK^\dagger\left(\Ekyb\right), \quad
\mbox{with} \ \rank(\hat Y) = n-k+t, \ \hat Y e = 0, \ Z \in \Ss_+^{n-k+t+1}.
\eeq
To accomplish this, we let $T_1= \begin{bmatrix} S & 0 \cr 0 & 1 \end{bmatrix}$. 
Then $T_1 \succ 0$ and
\[
B+\frac 1k ee^T = U_B T_1 U_B^T = \bar P \bar P^T, \quad 
\mbox{where} \ \bar P := U_B T_1^{1/2} \in \MM^{k \times (t+1)}.
\]
Let
\[
P :=
\left[
\begin{array}{c|c}
\bar P & 0 \cr
  \hline
0 & I_{n-k-1} \cr
-e^T \bar P & -e^T
\end{array}
\right]
\in \MM^{n \times (n-k+t)}.
\]
Since $\bar P$ has full-column rank, we see that $P$ also has full-column rank. Moreover, $P^Te = 0$.  Therefore, 
\[
\hat Y := PP^T = 
\left[
\begin{array}{c|cc}
\bar P \bar P^T & 0 & -e \cr
  \hline
0 & I_{n-k-1} & -e \cr
-e^T & -e^T & n - 1
\end{array}
\right]
\in \Snp,
\]
satisfies $\hat Y e = 0$ and $\rank(\hat Y) = n-k+t$.  
Furthermore, we have that $\hat Y = U Z U^T$, where
\[
Z = 
\left[
\begin{array}{cc|cc}
S & 0 & 0 & 0 \cr
0 & 1 & 0 & -\sqrt{k} \cr
 \hline 
0 & 0 & I_{n-k-1} & -e \cr
0 & -\sqrt{k} & -e^T & n - 1
\end{array}
\right]
\in \Ss^{n-k+t+1}.
\]
Note that we can also write $Z$ as
\[
Z = 
\begin{bmatrix}
S & 0 \cr
0 & T 
\end{bmatrix}
\in \Ss^{n-k+t+1},
\]
where
\[
T := 
\begin{bmatrix}
1 & 0 & -\sqrt{k}  \cr
0 & I_{n-k-1} & -e \cr
-\sqrt{k} & -e^T & n - 1
\end{bmatrix}
\in \Ss^{n-k+1}.
\]
The eigenvalues of $T$ are $0$, $1$, and $n$, with multiplicities $1$, $n-k-1$, and $1$, respectively.  Therefore, $\rank(T) = n-k$, which implies that $\rank(Z) = n-k+t$ 
and $Z \succeq 0$.

Letting $\hat D := \KK(\hat Y)$, we have that $\hat D \in \Ekyb$, since
\[
\hat D[1\!:\!k] = \KK(\hat Y[1\!:\!k]) = \KK(\bar P \bar P^T) 
= \KK\left(B + \frac{1}{k}ee^T\right) = \KK(B) = \bar D.
\]
Therefore, $\hat Y$ satisfies \eqref{eq:Yhat}, completing the proof.
\end{proof}

\begin{remark}
Theorem~\ref{thm:onecliquered} provides a reduction in the dimension of
the \EDM completion problem. Initially, our problem consists in finding
$Y\in \Snp \cap \Sc$ such that the constraint
\[
\KK(Y[\alpha])=D[\alpha], \quad \alpha = 1\!:\!k,
\]
holds. 
After the reduction, we have the smaller dimensional variable
$Z \in \Ss_+^{n-k+t}$; by construction $Y := (UV)Z(UV)^T$ will automatically 
satisfy the above constraints.
This is a reduction of $k-t-1=(n-1) -(n-k+t)$ in the dimension of the matrix variable.
The addition of the vector $e$ to the range of $B$,
$U_B := \begin{bmatrix} \bar U_B & \frac 1{\sqrt{k}} e \end{bmatrix}$, 
has a geometric interpretation.
If $B=PP^T$, $P\in \MM^{k \times t}$, then the rows of $P$ provide
{\em centered} positions for the $k$ sensors in the clique $\alpha$.
However, these sensors are not necessarily centered once they are combined
with the remaining $n-k$ sensors. Therefore, we have to
allow for translations, e.g.
to $P+ev^T$ for some $v$. The multiplication
$(P+ev^T)(P+ev^T)^T=PP^T +Pve^T+ev^TP^T+ev^Tve^T$ is included in the set of matrices that we get after adding $e$ to the range of $B$.
Note that $Pve^T+ev^TP^T+ev^Tve^T = \DD_e(y)$, for $y = Pv + \frac12 ev^Tv$.
\end{remark}

The special case $k=1$ is of interest.
\begin{cor}
\label{cor:cliqueredk1}
Suppose that the hypotheses of Theorem~\ref{thm:onecliquered} hold but
that $k=1$ and $\bar D = 0$. Then $U_B=1$, $U=I_n$, and
\beq
\label{eq:UBYk1}
\face \KK^\dagger\left(\Ekyb\right) 
=\face \KK^\dagger\left(\En\right) 
= \Ss_+^{n}  \cap \Sc = V \Ss_+^{n-1} V^T,
\eeq
where  $\begin{bmatrix} V & \frac{1}{\sqrt{n}}e  \end{bmatrix}
\in \MM^{n}$ is orthogonal.
\end{cor}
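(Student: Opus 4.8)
The plan is to read this off directly from Theorem~\ref{thm:onecliquered} by specializing its hypotheses to the case $k=1$, $\bar D = 0$, so the only real work is to track what each object in that theorem becomes. First I would record that a single point has embedding dimension $t=0$, and that $B = \KK^\dagger(\bar D) = \KK^\dagger(0) = 0$ as a $1\times 1$ matrix (using linearity of $\KK^\dagger$). Hence in the compact spectral decomposition $B = \bar U_B S \bar U_B^T$ the factor $\bar U_B \in \MM^{1\times 0}$ and $S \in \Ss^0_{++}$ are empty, so $U_B = \begin{bmatrix} \bar U_B & \frac{1}{\sqrt{k}} e \end{bmatrix}$ collapses to the scalar $1$ (here $e = 1 \in \R^1$). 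Consequently $U = \begin{bmatrix} U_B & 0 \cr 0 & I_{n-k} \end{bmatrix} = \begin{bmatrix} 1 & 0 \cr 0 & I_{n-1} \end{bmatrix} = I_n$, as claimed.

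The second step is to observe that the defining constraint of $\Ekyb$ is vacuous here: for $k=1$ the block $D[1\!:\!1] = D_{11}$ is just the first diagonal entry of $D$, and every $D \in \En$ has zero diagonal by \eqref{eq:dn}, so $D[1] = 0 = \bar D$ holds automatically. Thus $\Ekyb = \En$, which immediately yields the first equality $\face \KK^\dagger(\Ekyb) = \face \KK^\dagger(\En)$ in \eqref{eq:UBYk1}.

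Finally I would substitute the computed values into the conclusion of Theorem~\ref{thm:onecliquered}. With $k=1$ and $t=0$ we have $n-k+t+1 = n$, so $U \Ss_+^{n-k+t+1} U^T = I_n \Ss_+^n I_n = \Ss_+^n$, and the first form of \eqref{eq:UBY} becomes $\Ss_+^n \cap \Sc$. For the last form, $U^T e = e$ with $\|e\| = \sqrt{n}$, so the orthogonal completion $\begin{bmatrix} V & \frac{U^T e}{\|U^T e\|} \end{bmatrix}$ is exactly $\begin{bmatrix} V & \frac{1}{\sqrt{n}} e \end{bmatrix} \in \MM^n$ as stated, while $n-k+t = n-1$ and $UV = V$ give $(UV)\Ss_+^{n-k+t}(UV)^T = V \Ss_+^{n-1} V^T$. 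This establishes all the equalities in \eqref{eq:UBYk1}.

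The only delicate point, and the one I would be most careful about, is the handling of the degenerate zero-dimensional objects, namely $t=0$, the empty matrix $\bar U_B \in \MM^{1\times 0}$, and $\Ss^0_{++}$: one must confirm that the formulas of Theorem~\ref{thm:onecliquered} remain valid under these empty-block conventions so that $U_B$ genuinely reduces to $1$ and $U$ to $I_n$. Once this bookkeeping is settled, every assertion is a direct substitution and no new argument is needed.
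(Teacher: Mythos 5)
Your proposal is correct and follows the same route as the paper: the paper's entire proof is the one-line observation that $k=1$ forces $t=0$ and $U_B=1$, leaving the substitution into Theorem~\ref{thm:onecliquered} implicit. Your write-up simply makes explicit the bookkeeping the paper omits (the empty $\bar U_B$, the vacuous constraint $\Ekyb=\En$, and the identifications $U=I_n$, $UV=V$), so there is no substantive difference.
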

\begin{proof}
Since $k=1$, necessarily we get $t=0$ and we can set $U_B=1$.
\end{proof}

\subsubsection{Disjoint Cliques Facial Reduction}
\label{sect:disjointcliques}

Theorem~\ref{thm:onecliquered} can be easily extended to two or more
disjoint cliques; see also \cite{DiKrQiWo:06}.
\begin{cor}
\label{cor:disjcliques}
Let $D \in \En$ with embedding dimension $r$. 
Let $k_0:=1< k_1 < \ldots < k_l \leq n$. 
For $i=1,\ldots, l$, let
$\bar D_i := D[k_{i-1}\!:\!k_i] \in \EE^{k_i-k_{i-1}+1}$
with embedding dimension $t_i$,
$B_i := \KK^\dagger(\bar D_i) = \bar U_{B_i} S \bar U_{B_i}^T$,
where $\bar U_{B_i} \in \MM^{k\times t_i}$, 
$\bar U_{B_i}^T \bar U_{B_i} = I_{t_i}$, $S_i \in \Ss^{t_i}_{++}$,
and $U_{B_i} := 
\begin{bmatrix} 
\bar U_{B_i} & \frac 1{\sqrt{k_i}} e 
\end{bmatrix}
\in \MM^{k\times (t_i+1)}$.
Let 
\[
U := \begin{bmatrix} U_{B_1} & \ldots & 0 & 0 \cr 
                 \vdots & \ddots & \vdots & \vdots \cr
                 0 & \ldots & U_{B_l} & 0 \cr 
                  0 &\ldots & 0& I_{n-k_l}  \end{bmatrix}
\]
and $\begin{bmatrix} V & \frac{U^Te}{\|U^Te\|}  \end{bmatrix}
\in \MM^{n-k_l+\sum_{i=1}^l t_i +l}$ be orthogonal.
Then
\beq
\label{eq:UBYdisjointcl}
\begin{array}{rcl}
\bigcap_{i=1}^l \face \KK^\dagger\left(\En(k_{i-1}\!:\!k_i,\bar D_i)   \right) 
&=&
\left( U \Ss_+^{n-k_l+\sum_{i=1}^l t_i +l} U^T \right) \cap \Sc
\\&=&
 (U V) \Ss_+^{n-k_l+\sum_{i=1}^l t_i+l-1} (U V)^T.
\end{array}
\eeq
\end{cor}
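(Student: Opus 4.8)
The plan is to bootstrap from the single-clique result, Theorem~\ref{thm:onecliquered}, by treating each disjoint clique separately and then intersecting the resulting faces. The structural fact I will exploit is that faces of $\Snp$ are in inclusion-preserving bijection with subspaces of $\Rn$ via the range map: every face has the form $U\Ss_+^m U^T = \{Y\succeq 0 : \RR(Y)\subseteq \RR(U)\}$, as recorded in \eqref{eq:facerepr}. Because this correspondence preserves inclusions, intersections of faces correspond to intersections of the associated ranges, which reduces the entire problem to a single range computation.

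First I would fix a clique $i$ and apply Theorem~\ref{thm:onecliquered} to the block $\alpha_i = k_{i-1}\!:\!k_i$. Since $\Snp$, $\En$, and $\Sc = \{Y : Ye = 0\}$ are all invariant under simultaneous row/column permutations (note $Pe = e$ for any permutation $P$), the theorem applies after permuting $\alpha_i$ into the leading position, giving for each $i$
\[
\face \KK^\dagger\!\left(\En(k_{i-1}\!:\!k_i,\bar D_i)\right) = \left(U^{(i)} \Ss_+^{m_i} (U^{(i)})^T\right)\cap \Sc,
\]
where $U^{(i)}$ carries the block $U_{B_i} = \begin{bmatrix}\bar U_{B_i} & \frac{1}{\sqrt{k_i}}e\end{bmatrix}$ on the rows and columns of $\alpha_i$ and the identity elsewhere. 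Because $B_i e = 0$ forces $e\perp\RR(\bar U_{B_i})$, each $U_{B_i}$, and hence each $U^{(i)}$, has orthonormal columns, so $U^{(i)}\Ss_+^{m_i}(U^{(i)})^T = \{Y\succeq 0 : \RR(Y)\subseteq \RR(U^{(i)})\}$.

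Next I would intersect. Factoring out the common $\cap\,\Sc$ and using the range description of each face yields
\[
\bigcap_{i=1}^l \face \KK^\dagger\!\left(\En(k_{i-1}\!:\!k_i,\bar D_i)\right) = \left\{Y\succeq 0 : \RR(Y)\subseteq \textstyle\bigcap_{i=1}^l \RR(U^{(i)})\right\}\cap \Sc.
\]
The crux is then the purely linear-algebraic identity $\bigcap_{i=1}^l \RR(U^{(i)}) = \RR(U)$, with $U$ the block-diagonal matrix of the statement. The inclusion $\RR(U)\subseteq\RR(U^{(i)})$ is immediate from the block structure, giving $\supseteq$. For $\subseteq$, a vector $z$ lying in every $\RR(U^{(i)})$ must satisfy $z_{\alpha_i}\in\RR(U_{B_i})$ for each $i$, while $U^{(i)}$ leaves coordinates outside $\alpha_i$ unconstrained; since the cliques are disjoint, these block constraints act on disjoint coordinate sets and can be imposed simultaneously, forcing $z\in\RR(U)$. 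This is the step I expect to demand the most care: one must verify that the disjoint clique blocks genuinely partition the constrained coordinates $1\!:\!k_l$ (with the trailing $n-k_l$ coordinates matched by the $I_{n-k_l}$ block), so that the identity blocks of the different $U^{(i)}$ neither over- nor under-constrain the shared and trailing coordinates — which is precisely where disjointness is used.

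Finally, counting columns gives $\dim\RR(U) = \sum_{i=1}^l(t_i+1) + (n-k_l) = n-k_l+\sum_{i=1}^l t_i + l$, so the intersected face equals $\left(U\Ss_+^{\,n-k_l+\sum t_i+l}U^T\right)\cap \Sc$, the first claimed equality. For the second, I would reuse the reduction \eqref{eq:UVdotUVT} from the proof of Theorem~\ref{thm:onecliquered} verbatim: since $\begin{bmatrix} V & U^Te/\|U^Te\|\end{bmatrix}$ is orthogonal we have $V^TU^Te=0$, and intersecting $U\Ss_+^N U^T$ with $\Sc$ strips off exactly the direction $U^Te$, producing $UV\Ss_+^{\,N-1}V^TU^T$ with $N=n-k_l+\sum t_i+l$. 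No explicit relative-interior construction (as was needed in the single-clique proof) is required here, because each single-clique face is already an \emph{exact} equality and the face-lattice isomorphism transports exact equalities through intersection.
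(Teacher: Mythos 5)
Your proof is correct and takes essentially the same route as the paper: the paper's entire proof is the one-sentence observation that $\RR(U)$ equals the intersection of the ranges of the matrices $U_{B_i}$ with identity blocks added, which is exactly your key step. Your write-up simply supplies the details the paper leaves implicit --- the face--range correspondence $U\Ss_+^m U^T=\{Y\succeq 0:\RR(Y)\subseteq\RR(U)\}$, the use of disjointness to compute the range intersection, and the $\cap\,\Sc$ reduction via $V^TU^Te=0$.
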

\begin{proof}
The result follows from noting that the range of $U$ is the intersection
of the ranges of the matrices $U_{B_i}$ with appropriate identity
blocks added.
\end{proof}

\subsection{Two (Intersecting) Clique Facial Reduction}
\label{sect:twoclique}
The construction \eqref{eq:UVdotUVT} illustrates how we can find the
intersection of two faces. Using this approach, we now extend 
Theorem~\ref{thm:onecliquered} to two cliques that (possibly) intersect;
see the ordered indices in \eqref{eq:ordcliques} 
and the corresponding Venn diagram in Figure~\ref{fig:alpha}.
We also find expressions for the intersection of
the corresponding faces in $\Snp$; see equation \eqref{eq:UBY2}.
The key is to find the intersection of the subspaces that 
represent the faces, as in condition \eqref{eq:rangeU}.

\begin{figure}[htb]
\epsfxsize=180pt
\centerline{\epsfbox{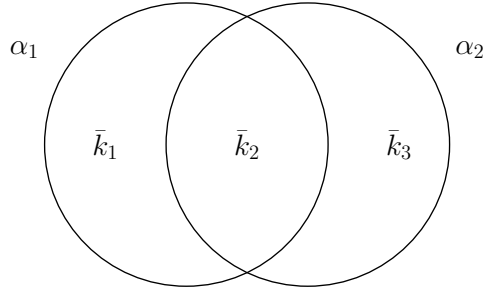}}
\caption{Venn diagram of the sets of ordered indices, $\alpha_1$ and $\alpha_2$, 
in Theorem~\ref{thm:interclique}}
\label{fig:alpha}
\end{figure}

\begin{thm}
\label{thm:interclique}
Let $D \in \En$ with embedding dimension $r$ and, as in Figure~\ref{fig:alpha}, 
define the sets of positive integers
\beq
\label{eq:ordcliques}
\begin{array}{c}
 \alpha_1  :=  1\!:\!(\bar k_1 + \bar k_2), \quad
  \alpha_2  :=  (\bar k_1 + 1)\!:\!(\bar k_1 + \bar k_2 + \bar k_3)
   \subseteq  1\!:\!n,  \\
 k_1  :=  |\alpha_1| = \bar k_1 + \bar k_2, \quad
  k_2  :=  |\alpha_2| = \bar k_2 + \bar k_3, \\
k := \bar k_1 + \bar k_2 + \bar k_3.
\end{array}
\eeq
For $i=1,2$, let $\bar D_i := D[\alpha_i] \in \EE^{k_i}$
with embedding dimension $t_i$,
and $B_i := \KK^\dagger(\bar D_i) = \bar U_i S_i \bar U_i^T$,
where $\bar U_i \in \MM^{k_i\times t_i}$, $\bar U_i^T \bar U_i = I_{t_i}$,
$S_i \in \Ss^{t_i}_{++}$, and
$U_i := 
\begin{bmatrix} 
\bar U_i & \frac 1{\sqrt{k_i}} e 
\end{bmatrix}
\in \MM^{k_i \times (t_i+1)}$.
Let $t$ and $\bar U \in \MM^{k \times (t+1)}$ satisfy
\beq
\label{eq:rangeU}
\RR(\bar U) = 
\RR \left(
\begin{bmatrix} U_1 & 0 \cr 0 & I_{\bar k_3}  \end{bmatrix}
\right)
\cap
\RR \left(
\begin{bmatrix} I_{\bar k_1} & 0 \cr 0 & U_2 \end{bmatrix}
\right), \mbox{  with  }  \bar U^T \bar U = I_{t+1}.
\eeq
Let
$U := \begin{bmatrix} \bar U & 0 \cr 0 & I_{n-k} \end{bmatrix}
  \in \MM^{n \times (n-k+t+1)}$
and 
$\begin{bmatrix} V & \frac{U^Te}{\|U^Te\|} \end{bmatrix}
\in \MM^{n-k+t+1}$ be orthogonal. Then
\beq
\label{eq:UBY2}
\bigcap_{i=1}^2
\face \KK^\dagger\left(\En(\alpha_i,\bar D_i)\right)
=\left( U \Ss_+^{n-k+t+1} U^T \right) \cap \Sc
= (U V) \Ss_+^{n-k+t} (U V)^T.
\eeq
\end{thm}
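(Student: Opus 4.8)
The plan is to reduce Theorem~\ref{thm:interclique} to the single-clique result (Theorem~\ref{thm:onecliquered}) applied twice, together with the intersection-of-faces construction~\eqref{eq:UVdotUVT}. The key observation is that the faces $\face \KK^\dagger(\En(\alpha_i,\bar D_i))$ are each described by a range condition via Theorem~\ref{thm:onecliquered}, so their intersection is governed by the intersection of the corresponding ranges, which is exactly the content of the defining condition~\eqref{eq:rangeU} for $\bar U$. I would therefore split the proof into two halves: first establish the inclusion $\bigcap_i \face \KK^\dagger(\En(\alpha_i,\bar D_i)) \unlhd (U\Ss_+^{n-k+t+1}U^T)\cap\Sc$, and then prove that this face is minimal by exhibiting a feasible matrix in its relative interior.

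\textbf{Step 1: the inclusion.}
For each clique $\alpha_i$, Theorem~\ref{thm:onecliquered} (applied after permuting the indices of $\alpha_i$ to the leading block) tells us that
\[
\face \KK^\dagger\!\left(\En(\alpha_i,\bar D_i)\right)
\unlhd \left( U^{(i)} \Ss_+^{\,\cdot} (U^{(i)})^T \right)\cap\Sc,
\]
where $U^{(1)} = \begin{bmatrix} U_1 & 0 \cr 0 & I_{\bar k_3} \end{bmatrix}$ embedded into $\MM^{n\times\cdot}$, and $U^{(2)} = \begin{bmatrix} I_{\bar k_1} & 0 \cr 0 & U_2 \end{bmatrix}$ embedded likewise (the remaining $n-k$ coordinates carry an identity block in both cases). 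Intersecting these two sets, any $Y$ lying in both faces has its range contained in both $\RR(U^{(1)})$ and $\RR(U^{(2)})$, hence in their intersection. By~\eqref{eq:rangeU}, that intersection (on the leading $k$ coordinates) is $\RR(\bar U)$, so $\RR(Y)\subseteq\RR(U)$ with $U := \begin{bmatrix}\bar U & 0 \cr 0 & I_{n-k}\end{bmatrix}$. Combined with $Ye=0$ (from $Y\in\Sc$, which holds since $\KK^\dagger$ maps into $\Sc$ by Proposition~\ref{prop:KTonetoone}), this gives $\bigcap_i\face\KK^\dagger(\En(\alpha_i,\bar D_i))\unlhd (U\Ss_+^{n-k+t+1}U^T)\cap\Sc$. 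The second equality in~\eqref{eq:UBY2} is then routine: since $V^TU^Te=0$ the centering condition removes exactly the $U^Te$ direction, giving the substitution~\eqref{eq:UVdotUVT}, identical to the argument already used in Theorem~\ref{thm:onecliquered}.

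\textbf{Step 2: minimality.}
To prove equality rather than mere containment, I must produce a matrix $\hat Y$ that lies simultaneously in both $\KK^\dagger(\En(\alpha_i,\bar D_i))$ and in $\relint\bigl((U\Ss_+^{n-k+t+1}U^T)\cap\Sc\bigr)$; equivalently, $\hat Y = UZU^T$ with $Z\succeq 0$, $\rank(\hat Y)=n-k+t$, $\hat Y e=0$, and $\hat Y[\alpha_i]$ completing $\bar D_i$ for each $i$. The natural candidate is a full-column-rank point matrix $\hat P$ whose rows realize a configuration consistent with both cliques simultaneously on the overlap $\alpha_1\cap\alpha_2$, translated to be centered ($\hat P^Te=0$), with $\hat Y:=\hat P\hat P^T$; this mirrors the explicit $P$-construction in the proof of Theorem~\ref{thm:onecliquered}. \textbf{This is the main obstacle}: unlike the single-clique case, the two given submatrices $\bar D_1,\bar D_2$ need not be mutually consistent a priori, and one must argue that the range condition~\eqref{eq:rangeU}, which prescribes $t+1=\dim\RR(\bar U)$, correctly forces the rank of any common realization to be exactly $n-k+t$ — neither larger (which would contradict the intersection being achievable) nor deficient (which would mean a strictly smaller face suffices). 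I would handle this by building $\hat Y$ from a generic element of the relative interior: choose $Z$ block-diagonal with a positive-definite block on the $\RR(\bar U)$ coordinates and the same rank-$(n-k)$ centering block $T$ as in Theorem~\ref{thm:onecliquered} on the trailing coordinates, verify $\rank(Z)=n-k+t$ and $Ze'=0$ for the appropriate $e'$, and then check that $\KK(\hat Y)[\alpha_i]=\bar D_i$ holds because $\RR(\bar U)\subseteq\RR(U_i)$ (on the relevant coordinates) guarantees the clique constraints are inherited from the single-clique construction. The consistency of the two cliques is precisely what makes $\RR(\bar U)$ nonempty beyond $\cone\{e\}$ and is built into the hypothesis that $\bar D_1,\bar D_2$ are submatrices of a common $D\in\En$.
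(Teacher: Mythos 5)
Your Step 1 is sound and is essentially the paper's whole argument, but you have weakened it unnecessarily and then tried to compensate with a Step 2 that does not work. The key point you missed is that Theorem~\ref{thm:onecliquered} is an \emph{equality}, $\face \KK^\dagger\left(\En(\alpha_i,\bar D_i)\right) = \left( U^{(i)} \Ss_+^{\cdot} (U^{(i)})^T \right)\cap\Sc$, not merely the containment $\unlhd$ you quote; and, just as importantly, the left-hand side of \eqref{eq:UBY2} is the \emph{intersection of the two faces}, not the minimal face containing the intersection of the two feasible sets $\KK^\dagger\left(\En(\alpha_i,\bar D_i)\right)$. Consequently no relative-interior point needs to be exhibited at all: once each face is written exactly, one uses the elementary identity $U^{(i)} \Ss_+ (U^{(i)})^T = \left\{ Y \succeq 0 : \RR(Y)\subseteq \RR(U^{(i)}) \right\}$ (valid because $U^{(i)}$ has full column rank), so intersecting the two faces is the same as intersecting the two range spaces, which is precisely hypothesis \eqref{eq:rangeU}; the final equality in \eqref{eq:UBY2} is the centering substitution \eqref{eq:UVdotUVT}. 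This is the paper's proof in its entirety --- the hard relative-interior construction was done once and for all inside Theorem~\ref{thm:onecliquered} and does not need to be repeated.

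The construction you sketch in Step 2 is, moreover, genuinely flawed. You claim that taking $Z$ block diagonal with an arbitrary positive definite leading block gives $\KK(\hat Y[\alpha_i]) = \bar D_i$ ``because $\RR(\bar U)\subseteq \RR(U_i)$ \ldots guarantees the clique constraints are inherited.'' Range containment never implies the distance constraints: replacing $\hat Y$ by $2\hat Y$ preserves the range, positive semidefiniteness, and centering, yet doubles every distance. The clique constraints are affine conditions on the entries of $\hat Y[\alpha_1]$ and $\hat Y[\alpha_2]$, and on the face they cut out a very small set --- in the nonsingular case the solution is the \emph{unique} point solving \eqref{eq:Zbeta} (Theorem~\ref{thm:twocliquered}), and in the singular case a one-parameter family \eqref{eq:solnZ} (Theorem~\ref{thm:degcompl}) --- so a generic positive definite block is simply infeasible. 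If you insist on proving the stronger statement that the right-hand side of \eqref{eq:UBY2} is the minimal face containing $\bigcap_{i}\KK^\dagger\left(\En(\alpha_i,\bar D_i)\right)$, you would have to construct a common completion of maximal rank $n-k+t$ satisfying both sets of distance constraints simultaneously, which your sketch does not accomplish; fortunately, the theorem as stated does not require this.
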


\begin{proof}
From Theorem~\ref{thm:onecliquered}, we have that
\[
  \face \KK^\dagger\left(\En(\alpha_1,\bar D_1)\right) = 
  \left(
  \left[\begin{array}{cc|c}
    U_1 & 0 & 0 \\
    0 & I_{\bar k_3} & 0 \\
    \hline
    0 & 0 & I_{n-k}
  \end{array}\right]  
  \Ss_+^{n-k_1+t_1+1}
  \left[\begin{array}{cc|c}
    U_1 & 0 & 0 \\
    0 & I_{\bar k_3} & 0 \\
    \hline
    0 & 0 & I_{n-k}
  \end{array}\right]^T
  \right) \cap \Sc
\]
and, after a permutation of rows and columns in 
Theorem~\ref{thm:onecliquered},
\[
  \face \KK^\dagger\left(\En(\alpha_2,\bar D_2)\right) = 
  \left(
  \left[\begin{array}{cc|c}
    I_{\bar k_1} & 0 & 0 \\
    0 & U_2 & 0 \\
    \hline
    0 & 0 & I_{n-k}
  \end{array}\right]
  \Ss_+^{n-k_2+t_2+1}
  \left[\begin{array}{cc|c}
    I_{\bar k_1} & 0 & 0 \\
    0 & U_2 & 0 \\
    \hline
    0 & 0 & I_{n-k}
  \end{array}\right]^T
  \right) \cap \Sc.
\]
The range space condition \eqref{eq:rangeU} then implies that
\[
\RR(U) = 
\RR \left(
\left[\begin{array}{cc|c}
  U_1 & 0 & 0 \\
  0 & I_{\bar k_3} & 0 \\
  \hline
  0 & 0 & I_{n-k}
\end{array}\right]  
\right)
\cap
\RR \left(
\left[\begin{array}{cc|c}
  I_{\bar k_1} & 0 & 0 \\
  0 & U_2 & 0 \\
  \hline
  0 & 0 & I_{n-k}
\end{array}\right]
\right),
\]
giving us the result \eqref{eq:UBY2}.
\end{proof}

\begin{remark}
Theorem~\ref{thm:interclique} provides a reduction in the dimension of
the \EDM completion problem. 
Initially, our problem consists in finding
$Y\in \Snp \cap \Sc$ such that the two constraints
\[
\KK(Y[\alpha_i])=D[\alpha_i],\quad i=1,2,
\]
hold. After the reduction, we want to find the smaller dimensional
$Z \in \Ss_+^{n-k+t}$; by construction $Y := (UV)Z(UV)^T$ will automatically 
satisfy the above constraints.
\end{remark}

The explicit expression for the intersection of the two faces is
given in equation~\eqref{eq:UBY2} and uses the matrix $\bar U$
obtained from the intersection of the two ranges 
in condition~\eqref{eq:rangeU}. 
Finding a matrix whose range is the intersection
of two subspaces can be done using
\cite[Algorithm~12.4.3]{GoVan:79}. However, our subspaces have special
structure. We can exploit this structure to find the intersection;
see Lemma~\eqref{lem:twocliquered} and Lemma~\eqref{lem:twocliquereddeg} below.

The dimension of the face in \eqref{eq:UBY2} is reduced to $n-k+t$.
However, we can get a dramatic reduction if we have a common
block with embedding dimension $r$, and a reduction
in the case the common block has embedding dimension $r-1$ as well. 
This provides an algebraic
proof using semidefinite programming of the rigidity of the union 
\index{rigidity}
of the two cliques under this intersection assumption.

\subsubsection{Nonsingular Facial Reduction with Intersection Embedding Dimension $r$}
\label{sect:rigidreduction}
\index{nonsingular reduction}

We now consider the case when the intersection of the two cliques
results in $D[\alpha_1\cap \alpha_2]$ having embedding dimension $r$;
see Figure~\ref{fig:intersembedr}.
\begin{figure}[htb]
\epsfxsize=180pt
\centerline{\epsfbox{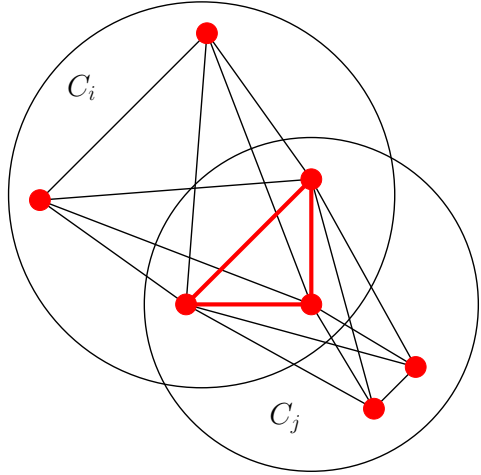}}
\caption{Two clique reduction with intersection with embedding 
       dimension $r$}
\label{fig:intersembedr}
\end{figure}
We see that we can explicitly
find the completion of the \EDM $D[\alpha_1\cup \alpha_2]$.
We first need the following result on the
intersection of two structured subspaces.

\begin{lem}
\label{lem:twocliquered}
Let
\[
U_1:= \kbordermatrix{ 
       & r+1 \cr
  s_1  & U^{\prime}_1  \cr 
  k    & U^{\prime\prime}_1 
}, \quad
U_2:= \kbordermatrix{
      & r+1 \cr
  k   & U^{\prime\prime}_2 \cr 
  s_2 & U^{\prime}_2
}, \quad
\hat U_1:= \kbordermatrix{
      & r+1  & s_2 \cr
  s_1 & U_1'  & 0   \cr 
  k   & U_1'' & 0   \cr 
  s_2 & 0     & I
}, \quad
\hat U_2:= \kbordermatrix{ 
      & s_1 & r+1  \cr
  s_1 & I   & 0 \cr 
  k   & 0   & U_2'' \cr
  s_2 & 0   & U_2'
}
\]
be appropriately blocked with $U_1'', U_2'' \in \MM^{k \times (r+1)}$
full column rank and $\RR(U_1'') = \RR(U_2'')$.
Furthermore, let
\beq
\label{eq:U1U2}
\bar U_1 := 
\kbordermatrix{ 
      & r+1 \cr
  s_1 & U_1^\prime\cr
  k   & U_1^{\prime\prime}\cr
  s_2 & U_2^{\prime} (U_2^{\prime\prime})^\dagger U_1^{\prime\prime}
}, \quad
\bar U_2 := 
\kbordermatrix{ 
      & r+1 \cr
  s_1 & U_1^{\prime} (U_1^{\prime\prime})^\dagger U_2^{\prime\prime}\cr
  k   & U_2^{\prime\prime}\cr
  s_2 & U_2^\prime
}.
\eeq
Then $\bar U_1$ and $\bar U_2$ are full column rank and satisfy
\[
\RR (\hat U_1) \cap  \RR (\hat U_2) =   
\RR\left(\bar U_1 \right) = \RR\left(\bar U_2 \right).
\]
Moreover, if $e_{r+1} \in \R^{r+1}$ is the $(r+1)^\mathrm{st}$ standard unit vector, and
$U_ie_{r+1} = \alpha_i e$, for some $\alpha_i \neq 0$, for $i=1,2$, then
$\bar U_ie_{r+1} = \alpha_i e$, for $i=1,2$.
\end{lem}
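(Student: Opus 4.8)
The plan is to treat the common hypothesis $\RR(U_1'') = \RR(U_2'')$, with both blocks of full column rank, as an invertible change of basis between the two cliques' coordinate systems, and then reduce everything to elementary block bookkeeping. First I would record the single fact that drives the whole argument. Since $U_2''$ has full column rank, its generalized inverse is a left inverse, $(U_2'')^\dagger U_2'' = I_{r+1}$, and the matrix $M := (U_2'')^\dagger U_1''$ satisfies $U_2'' M = U_1''$, because $U_2''(U_2'')^\dagger$ is the orthogonal projection onto $\RR(U_2'') = \RR(U_1'')$ and therefore fixes the columns of $U_1''$. Running the same reasoning with the roles reversed produces $N := (U_1'')^\dagger U_2''$ with $U_1'' N = U_2''$; then $U_1'' N M = U_1''$ together with injectivity of $U_1''$ forces $N M = I$, so $M$ is an invertible $(r+1)\times(r+1)$ matrix with $M^{-1} = N$.

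The full-column-rank claim for $\bar U_1$ and $\bar U_2$ is then immediate: each contains $U_1''$ (resp.\ $U_2''$) as its middle block, so $\bar U_i x = 0$ forces $U_i'' x = 0$, hence $x = 0$. For $\RR(\bar U_1) = \RR(\bar U_2)$ I would verify the identity $\bar U_2 = \bar U_1 M^{-1}$ one column block at a time, using $U_1'' M^{-1} = U_2''$, then $U_1' M^{-1} = U_1'(U_1'')^\dagger U_2''$ and $U_2' M M^{-1} = U_2'$; invertibility of $M^{-1}$ then equates the two ranges.

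For the range-intersection identity I would prove the two inclusions separately. The inclusion $\supseteq$ is the easy direction: a generic element $\bar U_1 x = (U_1' x,\, U_1'' x,\, U_2' M x)$ lies in $\RR(\hat U_1)$ by feeding $\hat U_1$ the vector $(x,\, U_2' M x)$, the free bottom block being chosen to match, and symmetrically $\bar U_2 y \in \RR(\hat U_2)$ by choosing the free top block; since $\RR(\bar U_1) = \RR(\bar U_2)$, both land in the intersection. I expect the reverse inclusion $\subseteq$ to be the only step needing genuine care, though it is short once the blocks are aligned: an element $z \in \RR(\hat U_1)\cap\RR(\hat U_2)$ can be written simultaneously as $(U_1' x,\, U_1'' x,\, w)$ and as $(u,\, U_2'' y,\, U_2' y)$; equating the middle $k$-block gives $U_1'' x = U_2'' y$, i.e.\ $U_2'' M x = U_2'' y$, and injectivity of $U_2''$ forces $y = M x$. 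Substituting back shows $z = (U_1' x,\, U_1'' x,\, U_2' M x) = \bar U_1 x$, closing the loop.

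Finally, for the statement about $e_{r+1}$, the hypothesis $U_i e_{r+1} = \alpha_i e$ unpacks block-wise into $U_1' e_{r+1} = \alpha_1 e$, $U_1'' e_{r+1} = \alpha_1 e$, $U_2'' e_{r+1} = \alpha_2 e$, and $U_2' e_{r+1} = \alpha_2 e$. Evaluating $\bar U_1 e_{r+1}$, the top two blocks are already $\alpha_1 e$, and the only block requiring work is the bottom one, $U_2'(U_2'')^\dagger U_1'' e_{r+1} = \alpha_1 U_2'(U_2'')^\dagger e$. Here I would substitute $e = \tfrac{1}{\alpha_2} U_2'' e_{r+1}$ and invoke $(U_2'')^\dagger U_2'' = I$ to get $(U_2'')^\dagger e = \tfrac{1}{\alpha_2} e_{r+1}$, so this block equals $\tfrac{\alpha_1}{\alpha_2} U_2' e_{r+1} = \alpha_1 e$, whence $\bar U_1 e_{r+1} = \alpha_1 e$. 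The symmetric computation, now using $(U_1'')^\dagger U_1'' = I$, gives $\bar U_2 e_{r+1} = \alpha_2 e$, completing the argument.
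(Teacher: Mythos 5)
Your proof is correct, and its engine is the same as the paper's: the paper also characterizes an element of $\RR(\hat U_1)\cap\RR(\hat U_2)$ by equating the two block parametrizations and solving the middle-block equation $U_1''v_1 = U_2''w_2$ via $w_2 = (U_2'')^\dagger U_1'' v_1$, justified by exactly your two identities $(U_2'')^\dagger U_2'' = I$ and $U_2''(U_2'')^\dagger U_1'' = U_1''$; your $y = Mx$ step is that computation verbatim. You depart from the paper in two places, both to good effect. First, you prove that $M = (U_2'')^\dagger U_1''$ is invertible with inverse $N = (U_1'')^\dagger U_2''$ and verify $\bar U_2 = \bar U_1 M^{-1}$ blockwise; the paper never makes this change of basis explicit and handles $\bar U_2$ only with ``a similar argument shows.'' Your version buys a sharper picture (the two candidate bases differ by an invertible $(r+1)\times(r+1)$ factor), and it makes both the equality $\RR(\bar U_1)=\RR(\bar U_2)$ and the full-column-rank claim one-line consequences. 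Second, for the $e_{r+1}$ statement the paper argues abstractly: $e \in \RR(\hat U_1)\cap\RR(\hat U_2) = \RR(\bar U_1)$, so $\bar U_1 v = e$ for some $v$, and full column rank of $U_1$ forces $v = \tfrac{1}{\alpha_1}e_{r+1}$. You instead compute $\bar U_1 e_{r+1}$ block by block, using $(U_2'')^\dagger e = \tfrac{1}{\alpha_2}e_{r+1}$. Both are valid; the paper's is slightly shorter, while yours is more self-contained in that it never needs to first recognize $e$ as an element of the intersection.
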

\begin{proof}
\label{pageproofblocks}
From the definitions, $x \in  \RR(\hat U_1) \cap \RR(\hat U_2)$
if and only if
\[
  x = 
  \begin{bmatrix}
    x_1 \\
    x_2 \\
    x_3 
  \end{bmatrix}
  =
  \begin{bmatrix}
	  U^{\prime}_1  v_1 \\
    U^{\prime\prime}_1 v_1 \\
          v_2 
  \end{bmatrix}
  =
  \begin{bmatrix}
          w_1 \\
	  U^{\prime\prime}_2 w_2 \\
	  U^{\prime}_2  w_2 
  \end{bmatrix}, \quad
  \mbox{for some 
  $v = \begin{bmatrix} v_1 \cr v_2 \end{bmatrix}$, 
  $w = \begin{bmatrix} w_1 \cr w_2 \end{bmatrix}$. }
\]
Note that $U_1'' v_1 = U_2'' w_2$ if and only if $w_2 = (U_2'')^\dagger U_1'' v_1$;
this follows from the facts that
$U_2''$ full column rank implies $(U_2'')^\dagger U_2'' = I$,
and 
$\RR(U_1'') = \RR(U_2'')$ implies $U_2''(U_2'')^\dagger U_1'' = U_1''$.
Therefore, $x \in  \RR(\hat U_1) \cap \RR(\hat U_2)$ if and only if
\[
  x = 
  \begin{bmatrix}
    x_1 \\
    x_2 \\
    x_3 
  \end{bmatrix}
  =
  \begin{bmatrix}
	  U_1'  v_1 \\
    U_1'' v_1 \\
  U_2' (U_2'')^\dagger U_1'' v_1
  \end{bmatrix}
  = 
  \bar U_1 v_1,
  \quad \mbox{for some $v_1$},
\]
with $v_2 := U_2' (U''_2)^\dagger U''_1 v_1$, $w_1 := U_1'v_1$, and 
$w_2 := (U''_2)^\dagger U''_1 v_1$, implying that
$\RR(\hat U_1) \cap \RR(\hat U_2) = \RR(\bar U_1)$;
a similar argument shows that $\RR(\hat U_1) \cap \RR(\hat U_2) = \RR(\bar U_2)$.

Now suppose, for $i=1,2$, that $U_i e_{r+1}  = \alpha_i e$, for some $\alpha_i \neq 0$.
Then $e \in \RR(\hat U_1) \cap \RR(\hat U_2)$, so $e \in \RR(\bar U_1)$, 
implying that $\bar U_1 v = e$, for some vector $v$.
Since $\bar U_1 = \begin{bmatrix} U_1 \cr U_2' (U_2'')^\dagger U_1'' \end{bmatrix}$,
we have $U_1 v = e$. 
Furthermore, since $U_1$ has full column rank, we conclude that 
$v = \frac{1}{\alpha_1} e_{r+1}$, implying that $\bar U_1 e_{r+1} = \alpha_1 e$.
Similarly, we can show that $\bar U_2 e_{r+1} = \alpha_2 e$.
\end{proof}

We now state and prove a key result that shows we can complete the distances in the
union of two cliques provided that their intersection has embedding dimension equal to $r$.

\begin{thm}
\label{thm:twocliquered}
Let the hypotheses of Theorem~\ref{thm:interclique} hold.  
Let 
\[
  \beta \subseteq \alpha_1 \cap \alpha_2, \quad 
  \bar D := D[\beta], \quad
  B := \KK^\dagger(\bar D), \quad \bar U_\beta := \bar U[\beta,:],  
\]
where 
$\bar U  \in \MM^{k \times (t+1)}$
satisfies equation~\eqref{eq:rangeU}.
Let 
$\begin{bmatrix} \bar V & \frac{\bar U^Te}{\|\bar U^Te\|} \end{bmatrix}
\in \MM^{t+1}$ be orthogonal.  Let 
\beq
\label{eq:Zcalc1}
Z:= (J \bar U_\beta \bar V)^\dagger B ((J \bar U_\beta \bar V)^\dagger)^T. 
\eeq
If the embedding dimension for $\bar D$ is $r$,  then 
$t = r$, $Z \in \Ss^{r}_{++}$ is the unique 
solution of the equation
\begin{equation}
  \label{eq:Zbeta}
  (J \bar U_\beta \bar V) Z (J \bar U_\beta \bar V)^T = B,
\index{$J$, orthogonal projection onto $\{e\}^\perp$}
\end{equation}
and
\beq
\label{eq:DKKUVZ}
D[\alpha_1 \cup \alpha_2] = \KK\left((\bar U \bar V)Z(\bar U \bar V)^T\right).
\eeq
\end{thm}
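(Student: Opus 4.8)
The goal is to show that when the common block $\bar D = D[\beta]$ has embedding dimension $r$, the matrix $\bar U$ from the intersection condition \eqref{eq:rangeU} has exactly $t+1 = r+1$ columns, that $Z$ defined by \eqref{eq:Zcalc1} is the unique positive definite solution of \eqref{eq:Zbeta}, and that reassembling via $(\bar U\bar V)Z(\bar U\bar V)^T$ recovers the full block $D[\alpha_1\cup\alpha_2]$ under $\KK$. The strategy is to reduce everything to the single-clique picture of Theorem~\ref{thm:onecliquered} applied to $\beta$, and then to verify that the explicit formula for $Z$ inverts the congruence correctly. First I would pin down the dimension count: since $\beta\subseteq\alpha_1\cap\alpha_2$ and $\bar D$ has embedding dimension $r$, the matrix $B=\KK^\dagger(\bar D)$ has rank $r$, and the range of $\bar U$ restricted to the rows indexed by $\beta$ must contain $\RR(B)+\cone\{e\}$, which is $(r+1)$-dimensional. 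Combined with the fact that $\RR(\bar U)$ is the intersection of two spaces each built from cliques whose embedding dimensions $t_1,t_2$ are at least $r$ (they contain $\beta$), a squeeze argument forces $t=r$, so $\bar U\in\MM^{k\times(r+1)}$.

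\textbf{The congruence step.} Once $t=r$ is established, the heart of the argument is equation \eqref{eq:Zbeta}. The plan is to show that $J\bar U_\beta\bar V$ is a $k\times r$ matrix of full column rank $r$, so that its pseudoinverse satisfies $(J\bar U_\beta\bar V)^\dagger(J\bar U_\beta\bar V)=I_r$. The projection $J$ and the choice of $\bar V$ (orthogonal complement of $\bar U^Te/\|\bar U^Te\|$) are precisely what strip off the $\cone\{e\}$ direction that was appended to $\RR(B)$ when forming $U_B$-type matrices in Theorem~\ref{thm:onecliquered}; this is what makes the columns of $J\bar U_\beta\bar V$ span exactly $\RR(B)=\RR(\KK^\dagger(\bar D))$, matching the rank-$r$ range of $B$. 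Granting full column rank, I would substitute \eqref{eq:Zcalc1} into the left side of \eqref{eq:Zbeta} and use $(M^\dagger)(M)(M^\dagger)^T$ identities together with $B\in U_\beta$-range to collapse the expression back to $B$; uniqueness then follows because any two solutions $Z_1,Z_2$ would satisfy $(J\bar U_\beta\bar V)(Z_1-Z_2)(J\bar U_\beta\bar V)^T=0$, and full column rank of $J\bar U_\beta\bar V$ forces $Z_1=Z_2$. Positive definiteness of $Z$ follows because $B=\KK^\dagger(\bar D)\succeq 0$ has rank $r$ and the congruence by the full-column-rank $r\times r$ reduced map preserves rank and semidefiniteness.

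\textbf{Recovering the full block.} For the final identity \eqref{eq:DKKUVZ}, I would argue that $Y:=(\bar U\bar V)Z(\bar U\bar V)^T\in\Ss^k_+\cap\Sc$ is the matrix whose principal submatrix on $\beta$ correctly reproduces $B$ under $\KK^\dagger$, and whose off-diagonal blocks are then forced. The key leverage is that $\RR(\bar U)$ satisfies \eqref{eq:rangeU}, so $Y$ lies in the intersection of the two faces $\face\KK^\dagger(\En(\alpha_i,\bar D_i))$ from \eqref{eq:UBY2}; hence $\KK(Y)$ automatically agrees with $D$ on each of $\alpha_1$ and $\alpha_2$. Because the embedding dimension of the common block is exactly $r$ (the full embedding dimension), the clique $\alpha_1\cup\alpha_2$ is rigidly determined: the two local Gram matrices are glued along an $r$-dimensional affine piece, which admits no nontrivial folding, so the congruence reconstructs all inter-clique distances. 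Applying $\KK$ and reading off the $\alpha_1\cup\alpha_2$ block gives \eqref{eq:DKKUVZ}.

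\textbf{Main obstacle.} The step I expect to be most delicate is establishing that $J\bar U_\beta\bar V$ has full column rank $r$ and that its columns span exactly $\RR(B)$, since this underlies both the well-definedness of the pseudoinverse formula and the uniqueness claim. This requires carefully tracking how the appended $\frac{1}{\sqrt{k_i}}e$ columns in each $U_i$, the intersection in \eqref{eq:rangeU}, the restriction to rows $\beta$, and the projections $J$ and $\bar V$ interact — in particular verifying that no rank is lost when passing from $\bar U$ (with $t+1=r+1$ columns) to the $r$-column reduced map $J\bar U_\beta\bar V$, and that the single surviving dependence is exactly the centering direction $e$. The rest is then a mechanical pseudoinverse computation.
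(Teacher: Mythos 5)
Your proposal has the right skeleton (dimension count, full column rank of $A := J \bar U_\beta \bar V$, uniqueness via the congruence, recovery of the distances), but it is missing the one idea the paper's proof is organized around: introducing the Gram matrix of the \emph{true} completion, $Y := \KK^\dagger(D[\alpha_1 \cup \alpha_2])$, as a witness. In the paper, $Y$ does three jobs at once. First, applying Theorem~\ref{thm:onecliquered} to $Y$ and Lemma~\ref{lem:psdDe} to $Y[\alpha_1\cap\alpha_2]$ shows that the middle blocks $U_1'',U_2''$ have full column rank and equal ranges; this is exactly what licenses Lemma~\ref{lem:twocliquered} and hence $t=r$. Your ``squeeze argument'' only delivers the lower bound $t \geq r$: the upper bound does \emph{not} follow from $t_1=t_2=r$, since the two subspaces being intersected in \eqref{eq:rangeU} have dimensions $r+1+\bar k_3$ and $\bar k_1 + r + 1$, and without the full-rank/equal-range property of the middle blocks their intersection can be strictly larger than $r+1$ (compare Lemma~\ref{lem:twocliquereddeg}, where a rank-deficient middle block gives $t=r+1$). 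Second, by Theorem~\ref{thm:interclique}, $Y = (\bar U \bar V)\bar Z(\bar U \bar V)^T$ for some $\bar Z \succeq 0$; restricting to the rows and columns in $\beta$ and using $\KK^\dagger\KK = J(\cdot)J$ shows $\bar Z$ solves \eqref{eq:Zbeta}. This existence statement is precisely the containment $\RR(B) \subseteq \RR(A)$ that your pseudoinverse computation needs in order to ``collapse back to $B$,'' and it also yields full column rank of $A$ for free: $A \bar Z A^T = B$ with $\rank(B)=r$ forces $\rank(A) \geq r$. You flag this rank claim as your main obstacle and propose to prove it by tracking the structure of $\bar U$ directly; that route is never carried out in your proposal, and it becomes unnecessary once the witness $Y$ is in play.

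Third, and this is where your proposal contains an actual error: your argument for \eqref{eq:DKKUVZ} asserts that membership of $(\bar U\bar V)Z(\bar U\bar V)^T$ in the intersection of the two faces from \eqref{eq:UBY2} means $\KK$ of it ``automatically agrees with $D$ on each of $\alpha_1$ and $\alpha_2$.'' That implication is false: faces are cones, so $2Y'$ lies in the same face as $Y'$ while all its distances are doubled — face membership encodes the subspace/rank structure, not the distance constraints. The subsequent appeal to rigidity (``no nontrivial folding'') is the correct geometric intuition but is not a proof; indeed, making it rigorous is the content of the theorem itself. The paper closes the argument without any such appeal: since the true $Y$ produces a solution $\bar Z$ of \eqref{eq:Zbeta}, and since full column rank of $A$ makes that solution unique and equal to the $Z$ of \eqref{eq:Zcalc1}, one gets the matrix identity $(\bar U \bar V)Z(\bar U \bar V)^T = Y$, and applying $\KK$ gives \eqref{eq:DKKUVZ} immediately.
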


\begin{proof}
Since the embedding dimension of $\bar D$ is $r$, we have $\rank(B) = r$.  
Furthermore, we have $Be = 0$ and $B \in \Ss^{|\beta|}_+$, implying that
$|\beta| \geq r+1$.  In addition, since the embedding dimension of $D$ is also
$r$, we conclude that the embedding dimension of $\bar D_i$ is $r$, for $i=1,2$.
Similarly, the embedding dimension of $D[\alpha_1 \cap \alpha_2]$ is also $r$.

Since $\bar U \in \MM^{k \times (t+1)}$ satisfies equation~\eqref{eq:rangeU},
we have that
\[
\RR(\bar U) = 
\RR \left(
\begin{bmatrix} 
  U^{\prime}_1 & 0 \cr 
  U^{\prime\prime}_1 & 0 \cr 
  0 & I_{\bar k_3} 
\end{bmatrix}
\right)
\cap
\RR \left(
\begin{bmatrix} 
  I_{\bar k_1} & 0 \cr 
  0 & U^{\prime\prime}_2 \cr 
  0 & U^{\prime}_2
\end{bmatrix}
\right).
\]
Note that we have partitioned $U_i  = 
\begin{bmatrix} \bar U_i & \frac{1}{\sqrt{k_i}} e \end{bmatrix}
\in \MM^{k_i \times (r+1)}$ so that $U^{\prime\prime}_i =
\begin{bmatrix} \bar U''_i & \frac{1}{\sqrt{k_i}} e \end{bmatrix}
\in \MM^{|\alpha_1 \cap \alpha_2| \times (r+1)}$, for $i=1,2$.
Moreover, we have used the fact that the embedding dimension of 
$\bar D_i$ is $r$, so that $t_i=r$, for $i=1,2$.

We claim that $U_1''$ and $U_2''$ have full column rank and that 
$\RR(U_1'') = \RR(U_2'')$.
First we let $Y := \KK^\dagger(D[\alpha_1 \cup \alpha_2])$.  
Then $Y \in \KK^\dagger\left(\Ek(\alpha_1,\bar D_1)\right)$.
By Theorem~\ref{thm:onecliquered}, there exists $Z_1 \in \Ss^{\bar k_3 + r + 1}_+$
such that
\[
  Y = 
  \begin{bmatrix} 
    U^{\prime}_1 & 0 \cr 
    U^{\prime\prime}_1 & 0 \cr 
    0 & I_{\bar k_3} 
  \end{bmatrix}
  Z_1
  \begin{bmatrix} 
    U^{\prime}_1 & 0 \cr 
    U^{\prime\prime}_1 & 0 \cr 
    0 & I_{\bar k_3} 
  \end{bmatrix}^T.
\]
Therefore, $Y[\alpha_1 \cap \alpha_2] = 
\begin{bmatrix} U^{\prime\prime}_1 & 0 \end{bmatrix}
Z_1
\begin{bmatrix} U^{\prime\prime}_1 & 0 \end{bmatrix}^T
\in
U_1'' \Ss^{r+1}_+ (U_1'')^T$, so 
\[
	\RR(Y[\alpha_1 \cap \alpha_2]) \subseteq \RR(U_1'').
\]
Furthermore, since $\KK(Y) = D[\alpha_1 \cup \alpha_2]$, we have that 
$\KK(Y[\alpha_1 \cap \alpha_2]) = D[\alpha_1 \cap \alpha_2] 
= \KK \left( \KK^\dagger( D[\alpha_1 \cap \alpha_2] ) \right)$, 
so $Y[\alpha_1 \cap \alpha_2] \in \KK^\dagger( D[\alpha_1 \cap \alpha_2] ) + \NN(\KK)$.
Since $\NN(\KK) = \RR(\DD_e)$, there exists a vector $y$ such that
\[
	Y[\alpha_1 \cap \alpha_2] = \KK^\dagger( D[\alpha_1 \cap \alpha_2] ) + \DD_e(y)
		= \KK^\dagger( D[\alpha_1 \cap \alpha_2] ) + ye^T + ey^T.
\]
By Lemma~\ref{lem:psdDe}, 
$y \in 
\RR \left( 
\begin{bmatrix} \KK^\dagger( D[\alpha_1 \cap \alpha_2] ) & e \end{bmatrix} 
\right)$.
Therefore, 
\[
	\RR(Y[\alpha_1 \cap \alpha_2])
	= 
	\RR \left( 
			\begin{bmatrix} 
				\KK^\dagger( D[\alpha_1 \cap \alpha_2] ) & e 
			\end{bmatrix} 
		\right).
\]
Moreover, $\rank \KK^\dagger( D[\alpha_1 \cap \alpha_2] ) = r$ and
$\KK^\dagger( D[\alpha_1 \cap \alpha_2] )e = 0$, so
\[
	r+1 = \dim \RR(Y[\alpha_1 \cap \alpha_2]) \leq \dim \RR(U_1'') \leq r+1.
\]
Therefore, $U_1''$ has full column rank and 
$\RR(U_1'') = \RR(Y[\alpha_1 \cap \alpha_2])$.
Similarly, we can show that $U_2''$ has full column rank and 
$\RR(U_2'') = \RR(Y[\alpha_1 \cap \alpha_2])$, so we conclude that 
$\RR(U_1'') = \RR(U_2'')$.

We now claim that $t = r$, where $\bar U \in \MM^{k \times (t+1)}$
satisfies equation~\eqref{eq:rangeU}.  
Since $U_1'', U_2'' \in \MM^{|\alpha_1 \cap \alpha_2| \times (r+1)}$ have full column rank
and $\RR(U_1'') = \RR(U_2'')$, we have by  
Lemma~\ref{lem:twocliquered} that 
$\RR(\bar U) = \RR(\bar U_1) = \RR(\bar U_2)$, where
\[
\bar U_1 := 
\begin{bmatrix} 
  U_1^\prime\cr
  U_1^{\prime\prime}\cr
  U_2^{\prime} (U_2^{\prime\prime})^\dagger U_1^{\prime\prime}
\end{bmatrix}
\quad \mbox{and} \quad
\bar U_2 := 
\begin{bmatrix} 
  U_1^{\prime} (U_1^{\prime\prime})^\dagger U_2^{\prime\prime}\cr
  U_2^{\prime\prime}\cr
  U_2^\prime
\end{bmatrix}.
\]
Therefore, 
\[
t+1 = \dim \RR(\bar U) = \dim \RR(\bar U_1) = \dim \RR(\bar U_2) = r+1,
\]
so we have $t = r$, as claimed.

Recall, $Y = \KK^\dagger(D[\alpha_1 \cup \alpha_2])$, so 
$Y \in \cap_{i=1,2} \KK^\dagger\left(\Ek(\alpha_i,\bar D_i)\right)$.
Thus, Theorem~\ref{thm:interclique} implies that there exists 
$\bar Z \in \Ss^{r}_+$ such that 
$Y = (\bar U \bar V)\bar Z(\bar U \bar V)^T$. 
Observe that $\KK(Y[\beta]) = D[\beta] = \bar D$.  
Thus,
\[ 
  \KK\left(
  (\bar U_\beta \bar V) 
  \bar Z 
  (\bar U_\beta \bar V)^T
  \right) = \bar D, 
\]
implying that
\[ 
  \KK^\dagger \KK\left(
  (\bar U_\beta \bar V) 
  \bar Z 
  (\bar U_\beta \bar V)^T
  \right) = B. 
\]
Since $\KK^\dagger \KK$ is the projection onto $\RR(\KK^*) = \Sc$, 
we have that $\KK^\dagger \KK( \cdot ) = J(\cdot)J$. 
Therefore, we have that $\bar Z$ satisfies equation~\eqref{eq:Zbeta}.  
It remains to show that equation~\eqref{eq:Zbeta} has a unique solution.  
Let $A := J \bar U_\beta \bar V \in \MM^{|\beta| \times r}$.  
Then $A \bar Z A^T = B$ and $\rank(B) = r$ implies that $\rank(A) \geq r$, 
so $A$ has full column rank.
This implies that equation~\eqref{eq:Zbeta} has a unique solution, 
and that $\bar Z = A^\dagger B (A^\dagger)^T = Z$.
Finally, since $Y = (\bar U \bar V)Z(\bar U \bar V)^T$ and 
$D[\alpha_1 \cup \alpha_2] = \KK(Y)$, 
we get equation~\eqref{eq:DKKUVZ}.
\end{proof}

The following result shows that if we know the minimal face of $\Ss^n_+$ containing 
$\KK^\dagger(D)$, and we know a small submatrix of $D$, then we can compute 
a set of points in $\R^r$ that generate $D$ by solving a small equation.  

\begin{cor}
\label{cor:finalUbar}
Let $D \in \En$ with embedding dimension $r$, and let $\beta \subseteq 1\!:\!n$.
Let $U \in \MM^{n \times (r+1)}$ satisfy 
\[
	\face \KK^\dagger\left(D\right) 
	=
	\left( U \Ss_+^{r+1} U^T \right) \cap \Sc,
\]
let $U_\beta := U[\beta,:]$, 
and let $\begin{bmatrix} V & \frac{U^Te}{\|U^Te\|} \end{bmatrix}
\in \MM^{r+1}$ be orthogonal.
If $D[\beta]$ has embedding dimension $r$, then 
\[
	(J U_\beta V) Z (J U_\beta V)^T = \KK^\dagger(D[\beta])
\]
has a unique solution $Z \in \Ss^r_{++}$, and $D = \KK(PP^T)$, 
where $P := UVZ^{1/2} \in \R^{n \times r}$.
\end{cor}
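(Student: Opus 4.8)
This corollary is the single-face specialization of Theorem~\ref{thm:twocliquered}: the role of the intersection generator $\bar U$ is now played by the global minimal-face generator $U$, and $D$ replaces $D[\alpha_1\cup\alpha_2]$. First I would set $Y:=\KK^\dagger(D)$. Since $D$ has embedding dimension $r$, the Remark after Theorem~\ref{thm:KTonetoone} gives $Y\succeq 0$, $\rank(Y)=r$, and $Ye=0$, so $Y\in\face\KK^\dagger(D)=\left(U\Ss_+^{r+1}U^T\right)\cap\Sc$. Because $\begin{bmatrix} V & U^Te/\|U^Te\|\end{bmatrix}$ is orthogonal we have $V^TU^Te=0$, so the identity behind \eqref{eq:UVdotUVT} applies and $\left(U\Ss_+^{r+1}U^T\right)\cap\Sc=UV\Ss_+^rV^TU^T$. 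Hence $Y=(UV)\bar Z(UV)^T$ for some $\bar Z\in\Ss_+^r$.

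Next I would localize to $\beta$. Since the $(i,j)$ entry of $\KK(Y)$ depends only on $Y_{ii},Y_{jj},Y_{ij}$, we have $\KK(Y)[\beta]=\KK(Y[\beta])$, and $\KK(Y)=D$ yields $\KK(Y[\beta])=D[\beta]$. Applying $\KK^\dagger$ and using $\KK^\dagger\KK(\cdot)=J(\cdot)J$ (the orthogonal projection onto $\RR(\KK^*)=\Sc$) gives $JY[\beta]J=\KK^\dagger(D[\beta])$. Substituting $Y[\beta]=(U_\beta V)\bar Z(U_\beta V)^T$ turns this into $(JU_\beta V)\bar Z(JU_\beta V)^T=\KK^\dagger(D[\beta])$, which settles existence of a solution to the stated equation.

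For uniqueness and definiteness I would set $A:=JU_\beta V\in\MM^{|\beta|\times r}$. Because $D[\beta]$ has embedding dimension $r$, the same Remark gives $\rank\KK^\dagger(D[\beta])=r$; from $A\bar ZA^T=\KK^\dagger(D[\beta])$ this forces $\rank(A)\geq r$, so $A$ has full column rank and $A^\dagger A=I_r$. Then every solution must equal $A^\dagger\KK^\dagger(D[\beta])(A^\dagger)^T$, giving uniqueness; writing $\KK^\dagger(D[\beta])=MM^T$ shows $Z=(A^\dagger M)(A^\dagger M)^T\succeq 0$, and full row rank of $A^\dagger$ gives $\rank Z=r$, so $Z\in\Ss^r_{++}$. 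With $Z=\bar Z$ the unique solution, $P:=UVZ^{1/2}\in\R^{n\times r}$ satisfies $PP^T=(UV)Z(UV)^T=Y$, whence $D=\KK(Y)=\KK(PP^T)$. I expect no real obstacle here beyond bookkeeping: the two structural facts I lean on, namely $\left(U\Ss_+^{r+1}U^T\right)\cap\Sc=UV\Ss_+^rV^TU^T$ and $\KK(Y)[\beta]=\KK(Y[\beta])$, are exactly the ingredients already used in Theorems~\ref{thm:onecliquered} and~\ref{thm:twocliquered}.
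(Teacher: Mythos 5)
Your proposal is correct and is in substance the same argument as the paper's: the paper proves this corollary in one line by applying Theorem~\ref{thm:twocliquered} with $\alpha_1=\alpha_2=1\!:\!n$, and your proof simply inlines that theorem's proof in this special case --- the face representation $Y=(UV)\bar Z(UV)^T$ (your use of the hypothesis together with \eqref{eq:UVdotUVT} replaces the appeal to Theorem~\ref{thm:interclique}), restriction to $\beta$, the identity $\KK^\dagger\KK(\cdot)=J(\cdot)J$, and full column rank of $A:=JU_\beta V$ for uniqueness. One step deserves a touch-up: full row rank of $A^\dagger$ and full column rank of $M$ do not by themselves give $\rank(A^\dagger M)=r$ (the ranges could be misaligned); either note that $\RR(M)=\RR\bigl(A\bar ZA^T\bigr)\subseteq\RR(A)$, on which $A^\dagger$ acts injectively, or argue directly that $r=\rank\bigl(AZA^T\bigr)\le\rank Z\le r$, so $Z\in\Ss^r_{++}$.
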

\begin{proof}
Apply Theorem~\ref{thm:twocliquered} with $\alpha_1 = \alpha_2 = 1\!:\!n$.
\end{proof}

\begin{remark}
\label{rem:effZcalc}
A more efficient way to calculate $Z$ uses the full rank factorization
\[
B=QD^{1/2}\left(QD^{1/2}\right)^T, \quad Q^TQ=I_r, \quad D\in \Ss_{++}^r.
\]
Let $C = (J \bar U_\beta \bar V)^\dagger \left(QD^{1/2}\right)$.  Then 
$Z$ in \eqref{eq:Zcalc1} can be found from $Z=CC^T$.
Note that our algorithm postpones finding $Z$ until the end where we
can no longer perform any clique reductions. At each iteration, we
compute the matrix $\bar U$ that represents the face corresponding to
the union of two cliques; $\bar U$ is chosen from 
one of $\bar U_i$, for $i=1,2$ in \eqref{eq:U1U2}. Moreover, for
stability, we maintain $\bar U^T \bar U =I$, $\bar Ue_{r+1}=\alpha e$.

For many of our test problems, we can repeatedly apply Theorem~\ref{thm:twocliquered} 
until there is only one clique left. Since each
repetition reduces the number of cliques by one, this means that there
are at most $n$ such steps.
\end{remark}

\subsubsection{Singular Facial Reduction with Intersection Embedding Dimension $r-1$}
\label{sect:nonrigidreduction}
\index{singular reduction}

\begin{figure}[htb]
\epsfxsize=160pt
\centerline{\epsfbox{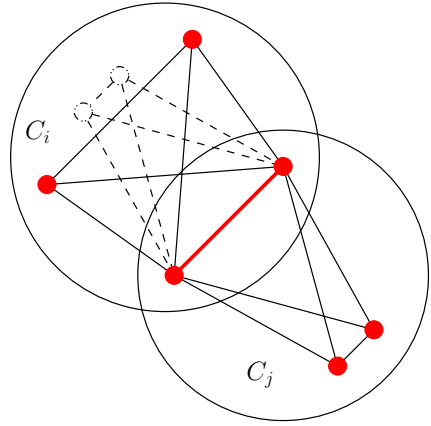}}
\caption{Two clique reduction with intersection having embedding 
       dimension $<r$}
\label{fig:deg2cliqred}
\end{figure}

We now show that if the embedding dimension of 
the intersection is $r-1$ (i.e., deficient), 
then we can find at most two completions. 
If exactly one of these two completions is feasible in the sense that it satisfies the related distance equality constraints and, if included, the related lower bound inequality constraints obtained from the radio range $R$, then we have identified the unique completion; see Figure~\ref{fig:deg2cliqred}. 
We first need the following extension of Lemma~\ref{lem:twocliquered} 
on the intersection of two structured
subspaces for the case where the common middle blocks are not full rank.
\index{singular intersection}

\begin{lem}
\label{lem:twocliquereddeg}
Let $U_i, \hat U_i, \bar U_i$, for $i=1,2$, be defined and appropriately blocked
as in Lemma~\ref{lem:twocliquered},
with $U^{\prime\prime}_i \in \MM^{k \times (r+1)}$
having rank $r$, for $i=1,2$, and $\RR(U_1'') = \RR(U_2'')$.
Let $0\neq u_i \in \NN(U_i^{\prime\prime})$, for $i=1,2$.
If $\bar U \in \MM^{k \times (t+1)}$ satisfies
$\RR(\bar U) = \RR (\hat U_1) \cap  \RR (\hat U_2)$, then $t=r+1$ and
\beq
\label{eq:rangeUsdeg}
\begin{array}{rcl}
\RR(\bar U)
&=&  
\RR\left(\begin{bmatrix} 
  U_1^\prime& 0\cr
  U_1^{\prime\prime}& 0\cr
  U_2^{\prime} (U_2^{\prime\prime})^\dagger U_1^{\prime\prime} &
              U_2^\prime u_2
\end{bmatrix} \right)
=
\RR\left(
\begin{bmatrix} 
\bar U_1
&
\begin{bmatrix} 
 0\cr
 0\cr
              U_2^\prime u_2
\end{bmatrix} 
\end{bmatrix} 
\right)
\vspace{.1in}
\\&=&
\RR\left(\begin{bmatrix} 
  U_1^{\prime} (U_1^{\prime\prime})^\dagger U_2^{\prime\prime} & 
                  U_1^{\prime} u_1\cr
  U_2^{\prime\prime}& 0\cr
  U_2^\prime & 0
\end{bmatrix} \right)
=
\RR\left(
\begin{bmatrix} 
\bar U_2
&
\begin{bmatrix} 
              U_1^\prime u_1 \cr 
 0\cr
 0\cr
\end{bmatrix}  
\end{bmatrix}  
\right).
\end{array}
\eeq
Moreover, if $e_{r+1} \in \R^{r+1}$ is the $(r+1)^\mathrm{st}$ standard unit vector, and
$U_ie_{r+1} = \alpha_i e$, for some $\alpha_i \neq 0$, for $i=1,2$, then
$\bar U_ie_{r+1} = \alpha_i e$, for $i=1,2$.
\end{lem}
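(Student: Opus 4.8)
My plan is to mirror the proof of Lemma~\ref{lem:twocliquered}, inserting the single extra degree of freedom created by the rank deficiency of the middle blocks. As there, $x\in\RR(\hat U_1)\cap\RR(\hat U_2)$ if and only if
\[
x=\begin{bmatrix} U_1'v_1\\ U_1''v_1\\ v_2\end{bmatrix}=\begin{bmatrix} w_1\\ U_2''w_2\\ U_2'w_2\end{bmatrix}
\]
for some $v=\begin{bmatrix} v_1\\ v_2\end{bmatrix}$ and $w=\begin{bmatrix} w_1\\ w_2\end{bmatrix}$, the only coupling being the middle-block equation $U_1''v_1=U_2''w_2$. The new feature is that $U_2''$ now has a one-dimensional kernel $\NN(U_2'')=\spanl\{u_2\}$. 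First I would record, exactly as in Lemma~\ref{lem:twocliquered}, that $\RR(U_1'')=\RR(U_2'')$ forces $U_2''(U_2'')^\dagger U_1''=U_1''$; hence $U_1''v_1\in\RR(U_2'')$, the middle-block equation is solvable, and its general solution is $w_2=(U_2'')^\dagger U_1''v_1+\tau u_2$ with $\tau\in\R$.

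Substituting this $w_2$ into the bottom block gives $v_2=U_2'w_2=U_2'(U_2'')^\dagger U_1''v_1+\tau\,U_2'u_2$, while the top two blocks are unchanged. Thus
\[
x=\begin{bmatrix} U_1'\\ U_1''\\ U_2'(U_2'')^\dagger U_1''\end{bmatrix}v_1+\tau\begin{bmatrix} 0\\ 0\\ U_2'u_2\end{bmatrix}=\bar U_1 v_1+\tau\begin{bmatrix} 0\\ 0\\ U_2'u_2\end{bmatrix},
\]
which is precisely the first (hence the second) range expression in \eqref{eq:rangeUsdeg}; the third and fourth follow verbatim after interchanging the two cliques and solving the middle block for $v_1$ in terms of $w_2$ using $\NN(U_1'')=\spanl\{u_1\}$.

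For the dimension count I would show the displayed spanning matrix has full column rank $r+2$, giving $t+1=r+2$. The block $\bar U_1$ has full column rank $r+1$ because its top $s_1+k$ rows equal $U_1=\begin{bmatrix} U_1'\\ U_1''\end{bmatrix}$, which has full column rank (the same observation as in Lemma~\ref{lem:twocliquered}). The appended column is nonzero, since $U_2'u_2=0$ together with $U_2''u_2=0$ would give $u_2\in\NN(U_2)=\{0\}$, contradicting $u_2\neq0$; and it is independent of $\RR(\bar U_1)$, since $\bar U_1 c=\begin{bmatrix} 0\\ 0\\ U_2'u_2\end{bmatrix}$ forces $U_1c=0$ from the top two blocks, so $c=0$ and then $U_2'u_2=0$, a contradiction. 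Hence $\dim\RR(\bar U)=r+2$ and $t=r+1$.

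The step I expect to be the genuine obstacle is the final assertion $\bar U_ie_{r+1}=\alpha_ie$. The clean argument from Lemma~\ref{lem:twocliquered} does not transfer directly: there $\RR(\bar U)=\RR(\bar U_i)$, so $e\in\RR(\hat U_1)\cap\RR(\hat U_2)=\RR(\bar U_i)$ lets one solve $\bar U_iv=e$ and read off $v=\tfrac1{\alpha_i}e_{r+1}$ from the full column rank of $U_i$. Here $\RR(\bar U_i)$ is only a proper subspace of the $(r+2)$-dimensional $\RR(\bar U)$, so membership of $e$ must be tracked carefully. I would still begin by checking $e\in\RR(\hat U_1)\cap\RR(\hat U_2)$ explicitly, via
\[
e=\hat U_1\begin{bmatrix}\tfrac1{\alpha_1}e_{r+1}\\ e\end{bmatrix}=\hat U_2\begin{bmatrix}e\\ \tfrac1{\alpha_2}e_{r+1}\end{bmatrix},
\]
using $U_ie_{r+1}=\alpha_ie$; this places $e$ in $\RR(\bar U)$. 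The crux is then to see how the $e$-direction splits between $\RR(\bar U_1)$ and the line through the appended column: the coefficient of that column is governed by the scalar $e_{r+1}^{T}u_2$. Pinning down this quantity --- and thereby showing that the normalized representative $\bar U$ may be taken with its last column parallel to $e$, as is maintained throughout the algorithm in Remark~\ref{rem:effZcalc} --- is the delicate point; once it is settled, the remaining ``$\alpha_i$'' bookkeeping is routine and identical to Lemma~\ref{lem:twocliquered}.
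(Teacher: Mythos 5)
Your derivation of the range identities is exactly the paper's own argument: the paper likewise reduces membership in $\RR(\hat U_1)\cap\RR(\hat U_2)$ to the middle-block equation $U_1''v_1=U_2''w_2$, uses $\RR(U_1'')=\RR(U_2'')$ to write its general solution as $w_2=(U_2'')^\dagger U_1''v_1+\alpha_2u_2$, substitutes into the bottom block to obtain the first identity in \eqref{eq:rangeUsdeg}, and gets the second ``similarly.'' Your explicit dimension count for $t=r+1$ is a sound addition (the paper asserts this without any rank argument), with one caveat: you justify full column rank of $U_1=\sarr{U_1'\cr U_1''}$ ``as in Lemma~\ref{lem:twocliquered},'' but there that property came from $U_1''$ having full column rank, which is precisely the hypothesis dropped here. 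Full column rank of $U_1$ must instead be imported from the setting in which the lemma is applied (Theorem~\ref{thm:degcompl} assumes $U_i^TU_i=I$); it does not follow from the stated hypotheses.

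The genuine gap is the one you flagged yourself: the final assertion $\bar U_ie_{r+1}=\alpha_ie$ is never proved in your proposal. Your instinct that this is the crux is correct, and in fact the situation is worse than ``delicate.'' The paper disposes of it in one sentence (it ``follows as in the proof of Lemma~\ref{lem:twocliquered}''), but that argument does not survive the rank deficiency, for exactly the reason you isolated. Directly: since $U_1''e_{r+1}=\alpha_1e$ and $U_2''e_{r+1}=\alpha_2e$,
\[
U_2'(U_2'')^\dagger U_1''e_{r+1}
=\frac{\alpha_1}{\alpha_2}\,U_2'(U_2'')^\dagger U_2''e_{r+1}
=\frac{\alpha_1}{\alpha_2}\,U_2'\left(I-\frac{u_2u_2^T}{u_2^Tu_2}\right)e_{r+1}
=\alpha_1e-\frac{\alpha_1\,(e_{r+1}^Tu_2)}{\alpha_2\,(u_2^Tu_2)}\,U_2'u_2,
\]
because $(U_2'')^\dagger U_2''$ is now the orthogonal projector onto $\{u_2\}^\perp$ rather than the identity. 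Since $U_2'u_2\neq0$ (your own observation: otherwise $U_2u_2=0$), the bottom block of $\bar U_1e_{r+1}$ equals $\alpha_1e$ if and only if $e_{r+1}^Tu_2=0$, and nothing in the hypotheses forces this. A concrete failure: $r=1$, collinear points $0,1,2$, cliques $\{1,2\}$ and $\{2,3\}$, so $s_1=k=s_2=1$ and $U_1=U_2=\frac1{\sqrt2}\sarr{-1&1\cr 1&1}$ (middle row = the one-point intersection, which has embedding dimension $0=r-1$); then $u_2=(1,1)^T$, $U_2'(U_2'')^\dagger U_1''=0$, and $\bar U_1e_2=\frac1{\sqrt2}(1,1,0)^T\neq\frac1{\sqrt2}e$. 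So the quantity you could not pin down genuinely cannot be pinned down from the given hypotheses: the final claim requires an extra assumption (e.g.\ $e_{r+1}^Tu_i=0$), and your refusal to wave it through is more careful than the paper's one-line dismissal. This is not merely cosmetic, since the proof of Theorem~\ref{thm:degcompl} invokes precisely this normalization of $\bar U_i$.
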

\begin{proof}
From the definitions, $x \in \RR(\bar U)$ if and only if 
\beq
\label{eq:xU12lemdeg}
  x = 
  \begin{bmatrix}
    x_1 \\
    x_2 \\
    x_3 
  \end{bmatrix}
  =
  \begin{bmatrix}
	  U^{\prime}_1  v_1 \\
    U^{\prime\prime}_1 v_1 \\
          v_2 
  \end{bmatrix}
  =
  \begin{bmatrix}
          w_1 \\
	  U^{\prime\prime}_2 w_2 \\
	  U^{\prime}_2  w_2 
  \end{bmatrix}, 
  \mbox{ for some }
v = \begin{bmatrix} v_1 \cr v_2 \end{bmatrix}, 
w = \begin{bmatrix} w_1 \cr w_2 \end{bmatrix}.
\eeq
Since $\RR(U_1'') = \RR(U_2'')$, 
and  $U''_i, i=1,2$, are both rank $r$, we conclude that
	  $x_2=U^{\prime\prime}_1 v_1 = U^{\prime\prime}_2 w_2$, 
for some $v_1,w_2$ if and only if $x_2 \in \RR( U^{\prime\prime}_1)$, with
$v_1,w_2$ determined by
\[
v_1=(U''_1)^\dagger x_2 + \alpha_1u_1,  \mbox{ for some } \alpha_1\in \R,
\qquad  w_2=(U''_2)^\dagger U''_1 v_1 + \alpha_2 u_2, \mbox{ for some }
\alpha_2 \in \R.
\]
In other words, we get 
\beq
\label{eq:iffxvwdeg}
\begin{array}{c}
	  x_2=U^{\prime\prime}_1 v_1 = U^{\prime\prime}_2 w_2, 
\mbox{ for some } v_1,w_2,  \\
\mbox{ if and only if } \\
x_2=U_1''v_1, \mbox{ for some } v_1, \mbox{ with }
  w_2=(U''_2)^\dagger U''_1 v_1+\alpha_2 u_2, \mbox{ for some } \alpha_2 \in \R.
\end{array}
\eeq
After substituting for $v_2$ with 
$v_2=U_2' w_2 = U_2' \left( (U_2'')^\dagger U_1''v_1 + 
\alpha_2 u_2\right)$, 
we conclude that \eqref{eq:xU12lemdeg} holds if and only if 
the first equality in \eqref{eq:rangeUsdeg} holds; i.e.,  \eqref{eq:xU12lemdeg} holds if and only if
\[
  x = 
  \begin{bmatrix}
    x_1 \\
    x_2 \\
    x_3 
  \end{bmatrix}
  =
  \begin{bmatrix}
	  U^{\prime}_1  v_1 \\
    U^{\prime\prime}_1 v_1 \\
  U_2^{\prime} (U_2^{\prime\prime})^\dagger U_1^{\prime\prime} v_1
                       +\alpha_2 U_2^{\prime} u_2
  \end{bmatrix},
  \mbox{ for some } v_1, \alpha_2,
\]
where
\[
v_2=U_2^\prime(U_2^{\prime\prime})^\dagger U_1^{\prime\prime} v_1 
+\alpha_2 U_2^\prime u_2, \quad
w_1=U_1'v_1, \quad
w_2=(U_2^{\prime\prime})^\dagger U_1^{\prime\prime} v_1 
+\alpha_2 u_2.
\]

The second equality in \eqref{eq:rangeUsdeg} follows similarly.
The last statements about $\bar U_ie_{r+1}$ follow as in the proof of
Lemma~\ref{lem:twocliquered}.
\end{proof}
In the rigid case in Theorem~\ref{thm:twocliquered}, 
we use the expression for $\bar U$
from Lemma~\ref{lem:twocliquered} to obtain a unique $Z$ in order to
get the completion of $D[\alpha_1 \cup \alpha_2]$. The $Z$ is unique because the
$r+1$ columns of $\bar U$ that represent the new clique
$\alpha_1\cup \alpha_2$ are linearly independent,
$e\in \RR(\bar U)$, $\rank(B)=r$, and $Be=0$. This means that the solution
$C$ of $(J \bar U_\beta \bar V)C = QD^{1/2}$ in Remark~\ref{rem:effZcalc}
exists and is unique. (Recall that 
$J \bar U_\beta \bar V$ is full column rank.)
This also means that the two
matrices, $U_1$ and $U_2$, that represent the cliques, $\alpha_1$ and $\alpha_2$,
respectively, can be replaced by the single matrix $\bar U$ without
actually calculating $C$; we can use $\bar U$  to represent
the clique $\alpha_1 \cup \alpha_2$ and complete all or part of the 
partial \edm $D[\alpha_1 \cup \alpha_2]$ only when needed.

We have a similar situation for the singular intersection case
following Lemma~\ref{lem:twocliquereddeg}. 
We have the matrix $\bar U$ to represent the intersection of the two subspaces,
where each subspace represents one of the cliques, $\alpha_1$ and $\alpha_2$.
However, this is not
equivalent to {\em uniquely} representing the union of the
two cliques, $\alpha_1$ and $\alpha_2$, 
since there is an extra column in $\bar U$ compared to the
nonsingular case. 
In addition, since $\rank(B)=r-1$, then $J \bar U_\beta \bar V$ is not 
necessarily full column rank.
Therefore, there may be infinite solutions for $C$ in
Remark~\ref{rem:effZcalc}; any
$C \in (J \bar U_\beta \bar V)^\dagger \left(QD^{1/2}\right) +
\NN(J \bar U_\beta \bar V)$ will give us a solution. 
Moreover, these solutions will not
necessarily satisfy  $\KK\left((\bar U C) (\bar UC)^T\right)  = D[\alpha_1 \cup \alpha_2]$.
We now see that we can continue and use the
$\bar U$ to represent a set of cliques rather than  just $\alpha_1 \cup \alpha_2$. 
Alternatively, we can use other relevant distance equality constraints or lower bound constraints from the radio range $R$ to determine the correct $C$ in order to get the
correct number of columns for $\bar U$; we can then get the correct
completion of $D[\alpha_1 \cup \alpha_2]$ if exactly one of the two possible completions with embedding dimension $r$ is feasible.

\begin{thm}
\label{thm:degcompl}
Let the hypotheses of Theorem~\ref{thm:twocliquered} hold with the
special case that 
$U_i^TU_i=I$, $U_ie_{r+1}=\alpha_ie$, for $i=1,2$.
In addition, let 
$\bar U$ be defined by one of the expressions in
\eqref{eq:rangeUsdeg} in Lemma~\ref{lem:twocliquereddeg}.
For $i=1,2$, let $\beta \subset \delta_i \subseteq \alpha_i$ and
$A_i := J \bar U_{\delta_i} \bar V$, where 
$\bar U_{\delta_i} := \bar U(\delta_i,:)$. Furthermore,
let $B_i := \KK^\dagger(D[\delta_i])$, define the linear system
\begin{equation}
  \label{eq:A1A2}
  \begin{array}{rcl}
    A_1 Z A_1^T & = & B_1 \\
    A_2 Z A_2^T & = & B_2,
  \end{array}
\end{equation}
and let $\bar Z \in \Ss^t$ be a 
particular solution of this system \eqref{eq:A1A2}.
If the embedding dimensions of $D[\delta_1]$ and $D[\delta_2]$ 
are both $r$, but the embedding dimension of $\bar D := D[\beta]$ is $r-1$, 
then the following holds.
\begin{enumerate}
  \item 
	  \label{item:cornulls}
    $\dim \NN(A_i) = 1$, for $i=1,2$.
  \item 
	  \label{item:corsolsZ}
For $i=1,2$, let $n_i \in \NN(A_i)$, $\|n_i\|_2 = 1$, 
    and $\Delta\!Z := n_1 n_2^T + n_2 n_1^T$. Then, 
    $Z$ is a solution of the linear system~\eqref{eq:A1A2} if and only if
    \beq
\label{eq:solnZ}
      Z = \bar Z + \tau \Delta\!Z, 
      \quad \mbox{for some $\tau \in \R$}.
    \eeq
  \item
	  \label{item:corcompletion}
    There are at most two nonzero solutions, $\tau_1$ and $\tau_2$,
          for the generalized eigenvalue
        problem $-\Delta\!Z v = \tau \bar Z v$, $v \neq 0$.
    Set $Z_i := \bar Z + \frac{1}{\tau_i} \Delta\!Z$,  for $i=1,2$. Then
    \[
      D[\alpha_1 \cup \alpha_2] \in \left\{
        \KK(\bar U \bar V Z_i \bar V^T \bar U^T) : i=1,2 \right\}.
    \]
\end{enumerate}
\end{thm}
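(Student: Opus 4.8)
The plan is to fix $t=r+1$ at the outset—this is exactly the conclusion of Lemma~\ref{lem:twocliquereddeg}, under whose hypotheses we operate—so that $\bar U\in\MM^{k\times(r+2)}$, $\bar V\in\MM^{(r+2)\times(r+1)}$, $A_i=J\bar U_{\delta_i}\bar V\in\MM^{|\delta_i|\times(r+1)}$, and $Z\in\Ss^{r+1}$. Since $\RR(\bar V)=\{\bar U^Te\}^\perp$ and $\bar U$ is injective, $\bar U\bar V$ has full column rank $r+1$, a fact I use repeatedly. For Item~\ref{item:cornulls} I would convert $\NN(A_i)$ into a pure range condition. As $J$ is the orthogonal projection onto $\{e\}^\perp$ and $\bar V$ maps $\R^{r+1}$ bijectively onto $\{\bar U^Te\}^\perp$, we have $A_iv=0$ iff $\bar U_{\delta_i}(\bar Vv)\in\spanl\{e\}$, so $\NN(A_i)$ is isomorphic to $L_i\cap\{\bar U^Te\}^\perp$ with $L_i:=\{c:\bar U_{\delta_i}c\in\spanl\{e\}\}$. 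By the last assertion of Lemma~\ref{lem:twocliquereddeg} one column of $\bar U$ equals $\alpha e$ with $\alpha\neq0$, so $e\in\RR(\bar U_{\delta_i})$ and that column lies in $L_i$ but not in $\{\bar U^Te\}^\perp$ (its pairing with $\bar U^Te$ is $\alpha k\neq0$). Hence $\dim\NN(A_i)=\dim L_i-1=\dim\NN(\bar U_{\delta_i})=\dim\{x\in\RR(\bar U):x|_{\delta_i}=0\}$. To see this last space is one-dimensional I read off the block form \eqref{eq:rangeUsdeg}: for the clique containing $\delta_1$ the extra column of $\bar U$ vanishes on the rows of $\alpha_1\supseteq\delta_1$, giving rank $\le r+1$, while the remaining $r+1$ columns restrict to $(U_1)_{\delta_1}$, which keeps full column rank $r+1$ because $\beta\subset\delta_1$ and $D[\delta_1]$ has embedding dimension $r$; the symmetric expression in \eqref{eq:rangeUsdeg} handles $\delta_2$.

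For Item~\ref{item:corsolsZ}, since each map $Z\mapsto A_iZA_i^T$ is linear and $\bar Z$ solves \eqref{eq:A1A2}, the solution set is $\bar Z$ plus $\bigcap_i\{Z\in\Ss^t:A_iZA_i^T=0\}$. For symmetric $Z$, the equation $A_iZA_i^T=0$ says the form $w^TZw$ vanishes on $\RR(A_i^T)=\{n_i\}^\perp$; by polarization this is $(I-n_in_i^T)Z(I-n_in_i^T)=0$, equivalently $Z=n_iw^T+wn_i^T$ for some $w$. Intersecting the two descriptions and applying $P_2:=I-n_2n_2^T$ on both sides of $n_1w_1^T+w_1n_1^T=n_2w_2^T+w_2n_2^T$ gives $(P_2n_1)(P_2w_1)^T+(P_2w_1)(P_2n_1)^T=0$; since $pq^T+qp^T=0$ forces $p,q$ to be dependent and $P_2n_1\neq0$ (as $n_1,n_2$ are independent), we get $w_1\in\spanl\{n_2\}$, so $Z\in\spanl\{n_1n_2^T+n_2n_1^T\}=\spanl\{\Delta\!Z\}$. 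This is exactly \eqref{eq:solnZ}.

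For Item~\ref{item:corcompletion}, a member $Z(\tau)=\bar Z+\tau\Delta\!Z$ is singular iff $(\bar Z+\tau\Delta\!Z)v=0$ for some $v\neq0$, i.e.\ $-\Delta\!Z\,v=\tfrac1\tau\bar Zv$; hence the singular members are precisely the $Z_i=\bar Z+\tfrac1{\tau_i}\Delta\!Z$ coming from the generalized eigenvalues $\tau_i$ of $-\Delta\!Z\,v=\tau\bar Zv$. Writing $\Delta\!Z=NGN^T$ with $N=\begin{bmatrix}n_1&n_2\end{bmatrix}$ exhibits $\Delta\!Z$ as rank $2$, which caps the nonzero eigenvalues at the two roots of a $2\times2$ pencil, so there are at most two, $\tau_1,\tau_2$. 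Finally, the true completion $Y=\KK^\dagger(D[\alpha_1\cup\alpha_2])$ equals $(\bar U\bar V)Z^\star(\bar U\bar V)^T$ for some $Z^\star$ by Theorem~\ref{thm:interclique}, with $\rank Z^\star=\rank Y=r$ by full column rank of $\bar U\bar V$; moreover $A_iZ^\star A_i^T=J\,Y[\delta_i]\,J=\KK^\dagger\KK(Y[\delta_i])=B_i$, so $Z^\star$ solves \eqref{eq:A1A2}, whence $Z^\star=Z(\tau^\star)$ by Item~\ref{item:corsolsZ}. Being singular, $Z^\star\in\{Z_1,Z_2\}$, giving $D[\alpha_1\cup\alpha_2]=\KK(Y)\in\{\KK(\bar U\bar VZ_i\bar V^T\bar U^T):i=1,2\}$.

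The main obstacle is Item~\ref{item:cornulls}: the exact one-dimensionality of $\NN(A_i)$ hinges on carefully unwinding the block structure of \eqref{eq:rangeUsdeg} together with $U_i^TU_i=I$, $U_ie_{r+1}=\alpha_ie$, and $u_i\in\NN(U_i'')$ (the key cancellation being that the extra column contributes nothing to $e^T\bar U(\cdot)$), and on confirming both that $(U_1)_{\delta_1}$ retains full column rank from the embedding dimension $r$ of $D[\delta_i]$ and that $n_1,n_2$ are independent so that $\Delta\!Z\neq0$.
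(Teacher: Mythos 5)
Your overall architecture is correct and is in fact considerably more explicit than the paper's own proof of this theorem. The paper simplifies to $\delta_i=\alpha_i$ and $|\beta|=r$, constructs the null vectors $n_i$ directly and explicitly from the vectors $u_i\in\NN(U_i'')$ of Lemma~\ref{lem:twocliquereddeg}, and then essentially asserts Items~\ref{item:corsolsZ} and \ref{item:corcompletion} (``follows from the definition of the general solution of a linear system,'' ``follows from the role that $\bar U$ plays''). You instead characterize $\NN(A_i)$ abstractly, via $\NN(A_i)\cong\left\{x\in\RR(\bar U): x|_{\delta_i}\in\spanl\{e\},\ e^Tx=0\right\}$, and reduce Item~\ref{item:cornulls} to a representation-independent dimension count that works for general $\delta_i$, not just $\delta_i=\alpha_i$; you then give a genuine polarization/projection argument that the homogeneous solutions of \eqref{eq:A1A2} are exactly $\spanl\{\Delta\!Z\}$, and a genuine rank/singularity argument for Item~\ref{item:corcompletion}. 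These are real proofs of steps the paper only gestures at.

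There is, however, one genuine gap: the linear independence of $n_1$ and $n_2$ is load-bearing, and you never establish it --- you invoke it in Item~\ref{item:corsolsZ} (``$P_2n_1\neq 0$ as $n_1,n_2$ are independent'') and again for $\rank\Delta\!Z=2$ in Item~\ref{item:corcompletion}, and then merely list it at the end as something ``to confirm.'' This is not cosmetic: if $\NN(A_1)=\NN(A_2)=\spanl\{n\}$, the homogeneous solution set of \eqref{eq:A1A2} is the full $(r+1)$-dimensional space $\left\{nw^T+wn^T : w\in\R^{r+1}\right\}$ and Item~\ref{item:corsolsZ} is simply false, so independence must be derived from the hypotheses. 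Fortunately it closes with exactly the machinery you built. Suppose $\NN(A_1)=\NN(A_2)$; by your isomorphism there is $0\neq x\in\RR(\bar U)$ with $x|_{\delta_1}=\lambda e$, $x|_{\delta_2}=\mu e$, $e^Tx=0$. Since $\beta\subseteq\delta_1\cap\delta_2$ and $\beta\neq\emptyset$, we get $\lambda=\mu$, so $x':=x-\lambda e\in\RR(\bar U)$ (recall $e\in\RR(\bar U)$) vanishes on $\delta_1\cup\delta_2$. Write $x'=\bar U_1v+a\,c$, where $c$ denotes the extra column of the first expression in \eqref{eq:rangeUsdeg}, whose only nonzero block is $U_2'u_2$. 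Restricting to $\delta_1$ (where $c$ vanishes) gives $(U_1)_{\delta_1}v=0$, hence $v=0$ by the full column rank you established; restricting to $\delta_2$ then gives $a\,c|_{\delta_2}=0$, and $c|_{\delta_2}\neq 0$ because $c|_{\delta_2}=0$ together with $U_2''u_2=0$ would force $(U_2)_{\delta_2}u_2=0$, contradicting the full column rank of $(U_2)_{\delta_2}$ (again a consequence of $D[\delta_2]$ having embedding dimension $r$). Hence $x=\lambda e$, and $e^Tx=0$ forces $x=0$, a contradiction. One further, smaller imprecision you share with the paper: in Item~\ref{item:corcompletion}, ``the singular members are precisely the $Z_i$'' silently excludes the case where the particular solution $\bar Z$ is itself singular and equals $Z^\star$ (this corresponds to $\tau=\infty$ in the pencil); this edge case is inherent in the theorem's formulation and disappears whenever $\bar Z$ is chosen nonsingular.
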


\begin{proof}
We follow a similar proof as in the nonsingular case.
For simplicity, we assume that $\delta_i=\alpha_i$, for $i=1,2$
(choosing smaller $\delta_i$ can reduce the cost of solving the linear
systems).

That a particular solution $\bar Z$ 
exists for the system \eqref{eq:A1A2}, follows
from the fact that $\bar U$ provides a representation for the
intersection of the two faces (or the union of the two cliques).

Since the embedding dimension of $\bar D$ is $r-1$, we have $\rank(B) = r-1$.  
Furthermore, we have $Be = 0$ and $B \in \Ss^{|\beta|}_+$, implying that
$|\beta| \geq r$.  Without loss of generality, and for simplicity, we assume
that $|\beta| = r$. Therefore, there exists $0\neq u_i \in
\NN(U_i^{\prime\prime})$, for $i=1,2$. From Lemma~\ref{lem:twocliquereddeg},
we can assume that we maintain $\bar U_i^T \bar U_i=I$, $\bar U_ie_{r+1}=\alpha_i
e$, for some $\alpha_i \neq 0$, for $i=1,2$. Therefore, the action of $\bar V$ is equivalent
to removing the $r+1$ column of $\bar U_i$. We can then explicitly use
$u_i$ to write down $n_i \in \NN(A_i)$. 
By construction, we now have $A_i(n_1 n_2^T + n_2 n_1^T)A_i^T=0$, for $i=1,2$.

From the first expression for $\bar U$
in \eqref{eq:rangeUsdeg}, we see that the choices for $n_1$ and $n_2$ in 
Part~\ref{item:cornulls} are in the appropriate nullspaces. The
dimensions follow from the assumptions on the embedding dimensions.

Part~\ref{item:corsolsZ} now follows from the definition of the general
solution of a linear system of equations; i.e., the sum of a particular
solution with any solution of the homogeneous equation.

Part~\ref{item:corcompletion} now follows from the role that $\bar U$
plays as a representation for the union of the two cliques.
\end{proof}
\begin{remark}
\label{rem:effZcalcdeg}
As above in the nonsingular case,
a more efficient way to calculate $\bar Z$ uses the full rank factorization
\[
B_i=QD^{1/2}\left(Q_iD_i^{1/2}\right)^T, \quad
Q_i^TQ_i=I_r, \quad
D_i\in \Ss_{++}^{r}, \quad 
i=1,2.
\]
(We have assumed that both have embedding dimension $r$, though we
only need that one does.)
We solve the equations $A_i C = \left(Q_iD_i^{1/2}\right)\bar Q_i$, 
$\bar Q_i\bar Q_i^T=I$, for $i=1,2$,
for the unknowns $C$, and $\bar Q_i$, for $i=1,2$.
Then a particular solution
$\bar Z$ in \eqref{eq:A1A2} can be found from $\bar Z=CC^T$.
Note that the additional orthogonal matrices $\bar Q_i$, for $i=1,2$ are needed
since, they still allow $A_iC(A_iC)^T=B_i$, for $i=1,2$. 
Also, without loss of generality, we can
assume $\bar Q_1=I$.
  
\end{remark}

\subsection{Clique Initialization and Node Absorption}

Using the above clique reductions,
we now consider techniques that allow one clique to grow/absorb other
cliques.  This applies Theorem~\ref{thm:twocliquered}.
We first consider an elementary and fast technique to find some of the
existing cliques.

\begin{lem}
	\label{lem:gencliques}
For each $i \in \{1,\ldots,n\}$, use half the radio range
and define the set
\[
C_i := \left\{ j \in \{1,\ldots,n\} : D_{ij} \leq (R/2)^2 \right\}.
\]
Then each $C_i$ corresponds to a clique of sensors that are
within radio range of each other.
\index{half radio range clique centered at node $i$, $C_i$}
\end{lem}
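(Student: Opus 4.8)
The plan is to verify the defining property of a clique directly from the triangle inequality, while keeping in mind that the entries of $D$ record \emph{squared} Euclidean distances. Recall that an edge $(j,l)$ belongs to the graph $\GG$ precisely when sensors $j$ and $l$ are within radio range, that is, when $\normt{p_j-p_l}\le R$, equivalently $D_{jl}\le R^2$. Hence to prove that $C_i$ is a clique it suffices to show that every pair $j,l\in C_i$ satisfies $D_{jl}\le R^2$, which then places $(j,l)$ among the known entries of the partial \EDM and therefore in $E$.

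First I would fix an arbitrary pair $j,l\in C_i$. By the definition of $C_i$ we have $D_{ij}\le (R/2)^2$ and $D_{il}\le (R/2)^2$; since $D_{ij}=\normt{p_i-p_j}^2$, this is equivalent to the two bounds $\normt{p_i-p_j}\le R/2$ and $\normt{p_i-p_l}\le R/2$ on the unsquared distances. The next step is to apply the triangle inequality for $\normt{\cdot}$, using $p_i$ as an intermediate point:
\[
\normt{p_j-p_l}\le \normt{p_j-p_i}+\normt{p_i-p_l}\le \tfrac{R}{2}+\tfrac{R}{2}=R.
\]
Squaring yields $D_{jl}=\normt{p_j-p_l}^2\le R^2$, so $(j,l)$ is an edge of $\GG$. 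Since $j,l\in C_i$ were arbitrary, every pair of nodes in $C_i$ is joined by an edge, which is exactly the assertion that $C_i$ is a clique.

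There is essentially no hard step in this argument: the only point requiring care is the passage between squared and unsquared distances, since $D$ stores $\normt{\cdot}^2$ rather than $\normt{\cdot}$, after which the conclusion is an immediate consequence of the triangle inequality together with the halving of the radio range. I would additionally remark that the construction is genuinely computable from the available data, because any node $j$ with $D_{ij}\le (R/2)^2 < R^2$ is necessarily within range $R$ of $i$, so its distance $D_{ij}$ is among the known entries and the membership condition for $C_i$ can always be evaluated; note also that $i\in C_i$ since $D_{ii}=0\le (R/2)^2$, so each $C_i$ is nonempty.
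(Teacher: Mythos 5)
Your proof is correct and is essentially identical to the paper's: both apply the triangle inequality to the unsquared distances $\sqrt{D_{ij}}\le R/2$ to conclude $\sqrt{D_{jl}}\le R$ for any $j,l\in C_i$. The extra remarks on computability and on $i\in C_i$ are harmless additions but not needed.
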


\begin{proof}
	Let $j,k \in C_i$ for a given $i \in \{1,\ldots,n\}$.
An elementary application of the triangle inequality shows that
$\sqrt{(D_{jk})} \leq \sqrt{(D_{ji})} + \sqrt{(D_{ki})} \leq R$.
\end{proof}

We can now assume that we have a finite set of indices
$\CC \subseteq \mathbb{Z}_{+}$  corresponding to a family of cliques,
\index{clique index set, $\CC$}
$\{C_i\}_{i\in \CC}$. We can combine cliques using the reductions given in 
Theorems~\ref{thm:twocliquered} and \ref{thm:degcompl}.
We now see how a clique can grow further by absorbing individual sensors;
see Figure \ref{fig:cliqabsorb}.

\begin{figure}[htb]
\epsfxsize=160pt
\centerline{\epsfbox{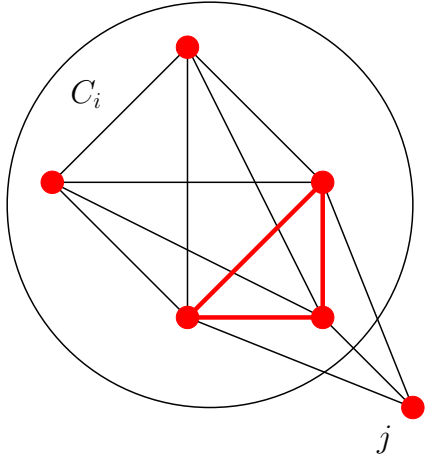}}
\caption{Absorption with intersection having embedding 
       dimension $r$}
\label{fig:cliqabsorb}
\end{figure}

\begin{cor}
	\label{cor:absorbsingle}
Let  $C_k$, for $k\in \CC$, be a given clique with node $l \notin  C_k$,
$\beta := \left\{j_1,\ldots,j_{r+1}\right\} \subseteq C_k$, 
such that the
distances $D_{lj_i}$, for $i=1,\ldots,r+1$ are known.
If
\beq
\label{eq:rankabsorb}
\rank \KK^\dagger (D[\beta]) = r,
\eeq
then $l$ can be absorbed by the clique $C_k$ and we can complete
the missing elements in column (row) $l$ of $D[C_k\cup \{l\}]$.
\end{cor}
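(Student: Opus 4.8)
The plan is to recognize this as an immediate instance of the rigid two-clique reduction of Theorem~\ref{thm:twocliquered}, with the single node $l$ playing the role of the ``extra'' block $\bar k_3$. The point is that $l$ together with the anchoring set $\beta = \{j_1,\ldots,j_{r+1}\}$ already constitutes a second clique, so absorbing $l$ is nothing but merging two overlapping cliques whose intersection is $\beta$, and the rank condition~\eqref{eq:rankabsorb} says precisely that this intersection has embedding dimension $r$.

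First I would set $\alpha_1 := C_k$ and $\alpha_2 := \beta \cup \{l\}$ and verify that these satisfy the standing hypotheses of Theorem~\ref{thm:interclique}. The set $\alpha_2$ is a clique: the pairwise distances among the nodes of $\beta$ are known because $\beta \subseteq C_k$, and the distances $D_{l j_i}$, $i = 1,\ldots,r+1$, are known by assumption, so $\bar D_2 := D[\alpha_2] \in \EE^{r+2}$ is a fully specified principal submatrix of the \EDM $D$. Since $l \notin C_k$ and $\beta \subseteq C_k$, we have $\alpha_1 \cap \alpha_2 = \beta$ and $\alpha_1 \cup \alpha_2 = C_k \cup \{l\}$. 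After a relabeling of the sensor indices (permitted, as noted in the paragraph preceding Theorem~\ref{thm:onecliquered}), the pair $(\alpha_1,\alpha_2)$ takes the ordered block form of~\eqref{eq:ordcliques} with overlap $\bar k_2 = |\beta| = r+1$, new block $\bar k_3 = 1$ (the node $l$), and $\bar k_1 = |C_k| - (r+1)$.

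Next I would convert the rank hypothesis into the embedding-dimension condition demanded by Theorem~\ref{thm:twocliquered}. By the characterization in the Remark following Theorem~\ref{thm:KTonetoone}, the equality $\rank \KK^\dagger(D[\beta]) = r$ together with $\KK^\dagger(D[\beta]) \succeq 0$ says exactly that $\bar D := D[\beta]$ has embedding dimension $r$; this is the single extra hypothesis of Theorem~\ref{thm:twocliquered}. (Note that $|\beta| = r+1$ meets the bound $|\beta| \ge r+1$ that the proof of that theorem derives from $\rank(B)=r$, $Be=0$, $B \succeq 0$.)

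With the hypotheses in place I would simply invoke Theorem~\ref{thm:twocliquered}, which produces the unique $Z \in \Ss^r_{++}$ solving~\eqref{eq:Zbeta} and the explicit completion $D[\alpha_1 \cup \alpha_2] = \KK\!\left((\bar U \bar V) Z (\bar U \bar V)^T\right)$ of~\eqref{eq:DKKUVZ}. Because $\alpha_1 \cup \alpha_2 = C_k \cup \{l\}$ and the only previously unknown entries of $D[C_k \cup \{l\}]$ are the distances from $l$ to the nodes of $C_k \setminus \beta$, this fills in row/column $l$ and absorbs $l$ into $C_k$. There is no genuinely hard step; the only care needed is the combinatorial bookkeeping of the overlap --- confirming that the two cliques meet in exactly the $r+1$ nodes of $\beta$, so that the \emph{rigid} (embedding dimension $r$) case applies rather than the singular case of Theorem~\ref{thm:degcompl}, and that $\alpha_2$ is indeed a clique so that each block submatrix is a bona fide \EDM as Theorem~\ref{thm:interclique} requires.
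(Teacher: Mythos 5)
Your proposal is correct and follows exactly the paper's own argument: the paper likewise sets $\alpha_1 := C_k$, $\alpha_2 := \{j_1,\ldots,j_{r+1},l\}$, $\beta := \alpha_1 \cap \alpha_2$, and invokes Theorem~\ref{thm:twocliquered} to recover the missing entries of $D[C_k \cup \{l\}]$. The only difference is that you spell out the bookkeeping (that $\alpha_2$ is a genuine clique, the relabeling into the block form \eqref{eq:ordcliques}, and the translation of the rank condition \eqref{eq:rankabsorb} into embedding dimension $r$) that the paper leaves implicit.
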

\begin{proof}
	Let $\alpha_1:=C_k$, $\alpha_2:=\{j_1,\ldots,j_{r+1},l\}$, and 
	$\beta := \alpha_1 \cap \alpha_2 = \{j_1,\ldots,j_{r+1}\}$. Then the
conditions in Theorem~\ref{thm:twocliquered} are satisfied and we
can recover all the missing elements in $D[C_k\cup \{l\}]$.
\end{proof}

\subsubsection{Node Absorption with Degenerate Intersection}

We can apply the same reasoning as for the clique reduction in the
nonsingular case, except now we apply Theorem~\ref{thm:degcompl}. 
To obtain a unique completion, we test the feasibility of the two possible completions against any related distance equality constraints or, if included, any related lower bound inequality constraints.
See Figure~\ref{fig:degnodeabsorg}.

\begin{figure}[htb]
\epsfxsize=160pt
\centerline{\epsfbox{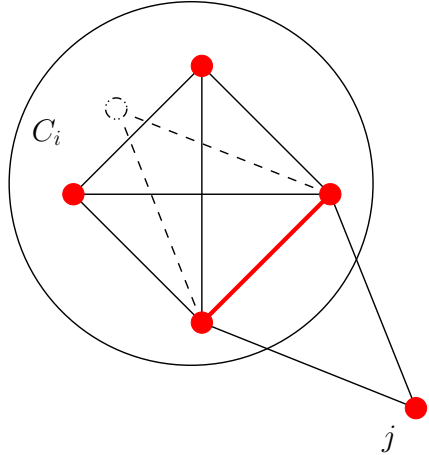}}
\caption{Degenerate absorption with intersection with embedding 
       dimension $<r$}
\label{fig:degnodeabsorg}
\end{figure}

\begin{cor}
\label{cor:degabsorbsingle}
Let  $C_k$, for $k\in \CC$, be a given clique with node $l \notin  C_k$,
$\beta := \left\{j_1,\ldots j_{r}\right\} \subseteq C_k$
such that the distances $D_{lj_i}$, for $i=1,\ldots,r$ are known. If
\beq
\label{eq:rankabsorbdeg}
\rank \KK^\dagger (D[\beta]) = r-1,
\eeq
then we can determine two possible completions of the distances.  If exactly one of these two completions is feasible, then $l$ can be absorbed by the clique $C_k$. We can also complete
the missing elements in column (row) $l$ of $D[C_k\cup \{l\}]$.
\end{cor}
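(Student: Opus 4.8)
The plan is to cast node absorption in the degenerate case as a direct specialization of Theorem~\ref{thm:degcompl}, exactly as Corollary~\ref{cor:absorbsingle} specializes Theorem~\ref{thm:twocliquered}. First I would set $\alpha_1 := C_k$ and $\alpha_2 := \{j_1,\ldots,j_r,l\}$, so that $\beta = \alpha_1 \cap \alpha_2 = \{j_1,\ldots,j_r\}$ (after the permutation of node indices needed to match the ordered-clique structure in \eqref{eq:ordcliques}). Both $\alpha_1$ and $\alpha_2$ are genuine cliques: every distance $D_{j_a j_b}$ is known because $j_a,j_b \in C_k$, and the distances $D_{lj_i}$ are known by assumption, so $D[\alpha_1]$ and $D[\alpha_2]$ are fully specified principal submatrices of the partial \EDM.

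Next I would verify the embedding-dimension hypotheses demanded by Theorem~\ref{thm:degcompl}. Taking $\delta_i := \alpha_i$, I need $D[\delta_1]=D[C_k]$ and $D[\delta_2]=D[\{j_1,\ldots,j_r,l\}]$ to have embedding dimension $r$, while $\bar D := D[\beta]$ has embedding dimension $r-1$. The last fact is precisely the rank hypothesis \eqref{eq:rankabsorbdeg}, since by the Remark following Theorem~\ref{thm:KTonetoone} a matrix in $\En$ has embedding dimension equal to $\rank \KK^\dagger(\cdot)$. The embedding dimension $r$ of $D[C_k]$ is the standing clique assumption, while $D[\alpha_2]$ has embedding dimension $r$ exactly when the node $l$ does not lie in the $(r-1)$-dimensional affine span of $\{j_1,\ldots,j_r\}$; this is where the degeneracy of $\beta$ makes itself felt geometrically. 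The normalizations $U_i^T U_i = I$, $U_i e_{r+1} = \alpha_i e$ required by Theorem~\ref{thm:degcompl} can be arranged exactly as in Remark~\ref{rem:effZcalc}.

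With the hypotheses in place, I would invoke Part~\ref{item:corcompletion} of Theorem~\ref{thm:degcompl}: the generalized eigenvalue problem $-\Delta\!Z v = \tau \bar Z v$ yields at most two nonzero roots $\tau_1,\tau_2$, hence at most two candidate matrices $Z_1,Z_2$ and correspondingly at most two completions $\KK(\bar U \bar V Z_i \bar V^T \bar U^T)$ of $D[\alpha_1 \cup \alpha_2] = D[C_k \cup \{l\}]$. Geometrically these are the two reflections of $l$ across the affine hull of $\beta$, the source of the reflection ambiguity in the singular case. To single out the correct one I would test each candidate against the data not yet used---any further known distance equality $D_{lj} = \omega_{lj}^2$ for an edge $(l,j)$ with $j \notin \beta$, or, when the radio-range information is included, the lower-bound inequalities $D_{lj} > R^2$ for non-edges---and declare $l$ absorbed by $C_k$ precisely when exactly one candidate is consistent, whereupon the missing entries in row/column $l$ of $D[C_k \cup \{l\}]$ are read off from that completion.

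The main obstacle is not the algebra, which is inherited wholesale from Theorem~\ref{thm:degcompl}, but the careful bookkeeping of the embedding dimensions: one must confirm that $D[\alpha_2]$ genuinely has embedding dimension $r$---so that Theorem~\ref{thm:degcompl} applies and produces exactly the reflection pair---rather than collapsing to $r-1$ when $l$ happens to lie in the affine hull of $\beta$. Isolating this nondegeneracy condition, and phrasing the feasibility test so that it provably selects the true completion, is the only delicate point.
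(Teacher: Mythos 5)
Your proposal follows exactly the paper's own proof: the paper likewise sets $\alpha_1 := C_k$, $\alpha_2 := \{j_1,\ldots,j_{r},l\}$, $\beta := \alpha_1 \cap \alpha_2 = \{j_1,\ldots,j_{r}\}$, and concludes by invoking Theorem~\ref{thm:degcompl}. You are in fact more careful than the paper, which simply asserts that ``the conditions in Theorem~\ref{thm:degcompl} are satisfied''; your point that one must additionally check $D[\alpha_2]$ has embedding dimension $r$ (i.e., that $l$ does not lie in the affine hull of the points indexed by $\beta$) is a real hypothesis the paper's one-line proof glosses over.
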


\begin{proof}
	Let $\alpha_1:=C_k$, $\alpha_2:=\{j_1,\ldots,j_{r},l\}$, and 
	$\beta := \alpha_1 \cap \alpha_2 = \{j_1,\ldots ,j_{r}\}$. Then the
conditions in Theorem~\ref{thm:degcompl} are satisfied and we
can recover all the missing elements in $D[C_k\cup \{l\}]$.
\end{proof}

\section{\texttt{SNLSDPclique} Facial Reduction Algorithm and Numerical Results}
\label{sect:algor}

Our \texttt{SNLSDPclique} algorithm starts by forming a clique $C_i$ around each
sensor $i$.
If and when we use this clique, we find a subspace representation
from the $r$ eigenvectors corresponding to the $r$ nonzero
eigenvalues of $B=\KK^\dagger(D[C_i])$.

The algorithm then grows and combines cliques using 
Theorem~\ref{thm:twocliquered}, Theorem~\ref{thm:degcompl},
Corollary~\ref{cor:absorbsingle}, and Corollary~\ref{cor:degabsorbsingle}.
In particular, we do not complete the \EDM each time we combine or grow
cliques; i.e., we do not evaluate the missing distances. Instead, we use 
the subspace representations of the corresponding faces of the \SDP cone 
and then find the intersection of the subspaces that represent the faces. 
This yields a subspace representation of the new smaller 
face representing the union of two cliques. This is based on 
Lemma~\ref{lem:twocliquered} and Lemma~\ref{lem:twocliquereddeg} and is therefore
inexpensive.

Once we cannot, or need not, grow cliques, we complete the distances
using Corollary~\ref{cor:finalUbar}. This is also inexpensive.
Finally, we rotate and translate the anchors to their original positions using
the approach outlined in \cite{DiKrQiWo:06}.  We have provided an outline of our facial reduction algorithm \texttt{SNLSDPclique} in Algorithm~\ref{alg:SNLSDPclique}.

\begin{algorithm}
\caption{\texttt{SNLSDPclique} -- a facial reduction algorithm}
\label{alg:SNLSDPclique}

\SetAlgoLined
\LinesNumbered

\SetKwFunction{Face}{Face}
\SetKwFunction{RigidCliqueUnion}{RigidCliqueUnion}
\SetKwFunction{RigidNodeAbsorption}{RigidNodeAbsorption}
\SetKwFunction{NonRigidCliqueUnion}{NonRigidCliqueUnion}
\SetKwFunction{NonRigidNodeAbsorption}{NonRigidNodeAbsorption}

\SetKwInOut{Input}{input}
\SetKwInOut{Output}{output}

\Input{Partial $n \times n$ Euclidean Distance Matrix $D_p$ and anchors $A \in \R^{m \times r}$\;}
\Output{$X \in \R^{|C_i| \times r}$, where $C_i$ is the largest final clique that contains the anchors\;}

\BlankLine
Let $\mathcal{C} := \{1,\ldots,n+1\}$\;
Let $\{C_i\}_{i \in \mathcal{C}}$ be a family of cliques satisfying $i \in C_i$ for all $i = 1,\ldots,n$%
\tcc*{For example, by Lemma~\ref{lem:gencliques}, we could choose $C_i := \left\{ j : (D_p)_{ij} < (R/2)^2 \right\}$, for $i=1,\ldots,n.$  Alternatively, we could simply choose $C_i := \{i\}$, for $i=1,\ldots,n$.}
Let $C_{n+1} := \{n-m+1,\ldots,n\}$%
\tcc*{$C_{n+1}$ is the clique of anchors}

\BlankLine
\tcc{GrowCliques}
Choose $\mbox{\sc{MaxCliqueSize}} > r+1$%
\tcc*{For example, $\mbox{\sc{MaxCliqueSize}} := 3(r+1)$}
\For{$i \in \mathcal{C}$}{
	\While{($|C_i| < \mbox{\sc{MaxCliqueSize}}$)
			{\bf and} ($\exists$ a node $j$ adjacent to all nodes in $C_i$)}{
		$C_i := C_i \cup \{j\}$\;
	}
}

\BlankLine
\tcc{ComputeFaces}
\For{$i \in \mathcal{C}$}{
	Compute $U_{B_i} \in \R^{|C_i| \times (r+1)}$ to represent face for clique $C_i$%
	\tcc*{see Theorem~\ref{thm:onecliquered}}
	\tcc{Alternatively, wait to compute $U_{B_i}$ when first needed.  This can be more efficient since $U_{B_i}$ is not needed for every clique.}
}

\BlankLine
\Repeat{not possible to  decrease $|\mathcal{C}|$ or increase $|C_i|$ for some $i \in \mathcal{C}$}{
	\uIf{$|C_i \cap C_j| \geq r+1$, for some $i,j \in \mathcal{C}$}{
		\RigidCliqueUnion{$C_i$,$C_j$}%
		\tcc*{see Algorithm~\ref{alg:RigidCliqueUnion}}
	}
	\uElseIf{$|C_i \cap \NN(j)| \geq r+1$, for some $i \in \mathcal{C}$ and node $j$}{
		\RigidNodeAbsorption{$C_i$,$j$}%
		\tcc*{see Algorithm~\ref{alg:RigidNodeAbsorption}}
	} 
	\uElseIf{$|C_i \cap C_j|= r$, for some $i,j \in \mathcal{C}$}{
		\NonRigidCliqueUnion{$C_i$,$C_j$}%
		\tcc*{see Algorithm~\ref{alg:NonRigidCliqueUnion}}
	}
	\ElseIf{$|C_i \cap \NN(j)| = r$, for some $i \in \mathcal{C}$ and node $j$}{
		\NonRigidNodeAbsorption{$C_i$,$j$}%
		\tcc*{see Algorithm~\ref{alg:NonRigidNodeAbsorption}}
	}
}

\BlankLine
Let $C_i$ be the largest clique that contains the anchors\;
\uIf{clique $C_i$ contains some sensors}{
	Compute a point representation $P \in \R^{|C_i| \times r}$ for the clique $C_i$%
	\tcc*{see Cor.~\ref{cor:finalUbar}}
	Compute positions of sensors $X \in \R^{(|C_i|-m)\times r}$ in clique $C_i$ by rotating $P$ to align with anchor positions $A \in \R^{m \times r}$%
	\tcc*{see Ding et al.~\cite[Method~3.2]{DiKrQiWo:06}}
	\Return $X$\;
}
\Else{
	\Return $X := \emptyset$\;
}
\end{algorithm}

\begin{algorithm}
\caption{\texttt{RigidCliqueUnion}}
\label{alg:RigidCliqueUnion}
\SetAlgoLined
\LinesNumbered
\SetKwInOut{Input}{input}
\SetKwInOut{Output}{output}
\SetKwInOut{Empty}{}
\Input{Cliques $C_i$ and $C_j$ such that $|C_i \cap C_j| \geq r+1$\;}
\BlankLine
Load $U_{B_i} \in \R^{|C_i|\times(r+1)}$ and $U_{B_j} \in \R^{|C_j|\times(r+1)}$ representing the faces corresponding to the cliques $C_i$ and $C_j$, respectively\;
Compute $\bar U \in \R^{|C_i \cup C_j| \times (r+1)}$ using one of the two formulas in equation~\eqref{eq:U1U2} from Lemma~\ref{lem:twocliquered}, where $U_1 = U_{B_i}$, $U_2 = U_{B_j}$, and  $k = |C_i \cap C_j|$%
\tcc*{see Theorem~\ref{thm:interclique}}
Update $C_i := C_i \cup C_j$\;
Update $U_{B_i} := \bar U$\; 
Update $\mathcal{C} := \mathcal{C} \setminus\{j\}$\;
\end{algorithm}

\begin{algorithm}
\caption{\texttt{RigidNodeAbsorption}}
\label{alg:RigidNodeAbsorption}
\SetAlgoLined
\LinesNumbered
\SetKwInOut{Input}{input}
\SetKwInOut{Output}{output}
\SetKwInOut{Empty}{}
\Input{Clique $C_i$ and node $j$ such that $|C_i \cap \NN(j)| \geq r+1$\;} 
\BlankLine
Load $U_{B_i} \in \R^{|C_i|\times(r+1)}$ representing the face corresponding to clique $C_i$\;
\If{$C_i \cap \NN(j)$ not a clique in the original graph}{
	Use $U_{B_i}$ to compute a point representation $P_i \in \R^{|C_i| \times r}$ of the sensors in $C_i$\;
	\tcc*[f]{see Cor.~\ref{cor:finalUbar}} \\
	Use $P_i$ to compute the distances between the sensors in $C_i \cap \NN(j)$\;
}
Use the distances between the sensors in $(C_i \cap \NN(j))\cup\{j\}$ to compute the matrix $U_{B_j} \in \R^{(|C_i \cap \NN(j)|+1)\times(r+1)}$ representing the face corresponding to the clique $(C_i \cap \NN(j))\cup\{j\}$%
\tcc*{see Theorem~\ref{thm:onecliquered}}
Compute $\bar U \in \R^{(|C_i|+1) \times (r+1)}$  using one of the two formulas in equation~\eqref{eq:U1U2} from Lemma~\ref{lem:twocliquered}, where $U_1 = U_{B_i}$, $U_2 = U_{B_j}$, and $k = |C_i \cap \NN(j)|$%
\tcc*{see Theorem~\ref{thm:interclique}}
Update $C_i := C_i \cup \{j\}$\;
Update $U_{B_i} := \bar U$\;
\end{algorithm}

\begin{algorithm}
\caption{\texttt{NonRigidCliqueUnion}}
\label{alg:NonRigidCliqueUnion}
\SetAlgoLined
\LinesNumbered
\SetKwInOut{Input}{input}
\SetKwInOut{Output}{output}
\SetKwInOut{Empty}{}
\Input{Cliques $C_i$ and $C_j$ such that $|C_i \cap C_j| = r$\;}
\BlankLine
Load $U_{B_i} \in \R^{|C_i|\times(r+1)}$ and $U_{B_j} \in \R^{|C_j|\times(r+1)}$ representing the faces corresponding to the cliques $C_i$ and $C_j$, respectively\;
Using $U_{B_i}$ and $U_{B_j}$, find the two point representations of the sensors in $C_i \cup C_j$\; 
\tcc*[f]{see Theorem~\ref{thm:degcompl}} \\
\If{exactly one of these two point representations is feasible}{
	Use the feasible point representation to compute $\bar U \in \R^{|C_i \cup C_j| \times (r+1)}$ representing the face corresponding to the clique $C_i \cup C_j$%
	\tcc*{see Theorem~\ref{thm:onecliquered}}
	Update $C_i := C_i \cup C_j$\;
	Update $U_{B_i} := \bar U$\; 
	Update $\mathcal{C} := \mathcal{C} \setminus\{j\}$\;
}
\end{algorithm}

\begin{algorithm}
\caption{\texttt{NonRigidNodeAbsorption}}
\label{alg:NonRigidNodeAbsorption}
\SetAlgoLined
\LinesNumbered
\SetKwInOut{Input}{input}
\SetKwInOut{Output}{output}
\SetKwInOut{Empty}{}
\Input{Clique $C_i$ and node $j$ such that $|C_i \cap \NN(j)| = r$\;} 
\BlankLine
Load $U_{B_i} \in \R^{|C_i|\times(r+1)}$ representing the face corresponding to clique $C_i$\;
\If{$C_i \cap \NN(j)$ not a clique in the original graph}{
	Use $U_{B_i}$ to compute a point representation $P_i \in \R^{|C_i| \times r}$ of the sensors in $C_i$\;
	\tcc*[f]{see Cor.~\ref{cor:finalUbar}} \\
	Use $P_i$ to compute the distances between the sensors in $C_i \cap \NN(j)$\;
}
Use the distances between the sensors in $(C_i \cap \NN(j))\cup\{j\}$ to compute the matrix $U_{B_j} \in \R^{(|C_i \cap \NN(j)|+1)\times(r+1)}$ representing the face corresponding to the clique $(C_i \cap \NN(j))\cup\{j\}$%
\tcc*{see Theorem~\ref{thm:onecliquered}}

Using $U_{B_i}$ and $U_{B_j}$, find the two point representations of the sensors in $C_i \cup \{j\}$\; 
\tcc*[f]{see Theorem~\ref{thm:degcompl}} \\
\If{exactly one of these two point representations is feasible}{
	Use the feasible point representation to compute $\bar U \in \R^{|C_i \cup C_j| \times (r+1)}$ representing the face corresponding to the clique $C_i \cup \{j\}$%
	\tcc*{see Theorem~\ref{thm:onecliquered}}
	Update $C_i := C_i \cup \{j\}$\;
	Update $U_{B_i} := \bar U$\; 
}
\end{algorithm}

\subsection{Numerical Tests}

\label{sect:numerics}
Our tests are on problems with sensors and anchors randomly placed in the region $[0,1]^r$ by means of a uniform random distribution.  
We vary the number of sensors from $2000$ to $10000$ in steps of
$2000$, and the radio range $R$ from $.07$ to $.04$ in steps of $-.01$.
We also include tests on very large problems with $20000$ to $100000$ sensors.
In our tests, we did not use the lower bound inequality constraints coming from the radio range; we only used the equality constraints coming from the partial Euclidean distance matrix.
Our tests were done using the 32-bit version of {\sc Matlab} R2009b on a laptop running Windows XP, with a 2.16 GHz Intel Core 2 Duo processor and with 2 GB of RAM.  The source code used for running our tests has been released under a GNU General Public License, and has been made available from the authors' websites.

We in particular
emphasize the low CPU times and the high accuracy of the solutions we obtain.
Our algorithm compares well with the recent work in 
\cite{pongtseng:09,WangZhengBoydYe:06}, where they use, for example,
 $R = .06$ for $n = 1000, 2000$,
 $R = .035$ for $n = 4000$,
 $R = .02$ for $n = 10000$, and also
use $10$\% of the sensors as anchors and limit the degree for each
node in order to maintain a low sparsity for the graph.

Tables~\ref{table:RigidCliqueUnion}, \ref{table:RigidNodeAbsorb}, and \ref{table:NonRigidCliqueUnion} contain the results of our tests on noiseless problems.  These tables contain the following information.
\begin{enumerate}

\item
{\bf \# sensors, $r$, \# anchors, and $R$:}
We use $m = (\# anchors)$, $n = (\# sensors) + (\# anchors)$, and $r$ to generate ten random instances of $p_1,\ldots,p_n \in \R^r$; the last $m$ points are taken to be the anchors.  For each of these ten instances, and for each value of the radio range $R>0$, we generate the the $n \times n$ partial Euclidean distance matrix $D_p$ according to
\[
	(D_p)_{ij} = 
	\begin{cases}
		\|p_i-p_j\|^2, & \text{if $\|p_i-p_j\|<R$, or both $p_i$ and $p_j$ are anchors} \\
		\text{unspecified}, & \text{otherwise}.
	\end{cases}
\]

\item
{\bf \# Successful Instances:}
An instance was called \emph{successful} if at least some, if not all, of the sensors could be positioned.  If, by the end of the algorithm, the largest clique containing the anchors did not contain any sensors, then none of the sensor positions could be determined, making such an instance unsuccessful.

\item
{\bf Average Degree:}
We have found that the average degree of the nodes of a graph is a good indicator of the percentage of sensors that can be positioned.  In the results reported, we give the average of the average degree over all ten instances.

\item
{\bf \# Sensors Positioned:}
We give the average number of sensors that could be positioned over all ten instances.   Note that below we indicate that the error measurements are computed only over the sensors that could be positioned.

\item
{\bf CPU Time:}
Indicates the average running time of {\texttt{SNLSDPclique}} over all ten instances.  This time does not include the time to generate the random problems, but it does include all aspects of the Algorithm~\ref{alg:SNLSDPclique}, including the time for {\texttt{GrowCliques}} and {\texttt{ComputeFaces}} at the beginning of the algorithm.

\item
{\bf Max Error:}  
The maximum distance between the positions of the sensors
found and the true positions of those sensors. This is defined as
\[
	\mbox{Max Error} := \max_{\mbox{\tiny $i$ positioned} } \| p_i - p_i^{\mathrm true} \|_2.
\]

\item
{\bf RMSD:}  The root-mean-square deviation of the positions of the sensors
found and the true positions of those sensors. This is defined as
\[
	\mbox{RMSD} := 
	\left(
	\frac{1}{\mbox{\# positioned}}
	\sum_{\mbox{\tiny $i$ positioned} } \| p_i - p_i^{\mathrm true} \|_2^2
	\right)^\frac12.
\]

\end{enumerate}
We note that for each set of ten random instances, the Max Error and RMSD values reported are the average Max Error and average RMSD values over the successful instances only; this is due to the fact that an unsuccessful instance will have no computed sensor positions to compare with the true sensor positions.

We have three sets of tests on noiseless problems.
\begin{enumerate}
\item 
In Table~\ref{table:RigidCliqueUnion} we report the results of using only the {\texttt{RigidCliqueUnion}} step (see Figure~\ref{fig:intersembedr}) to solve our random problems.

\begin{table}
\caption[\texttt{RigidCliqueUnion}]{Results of Algorithm~\ref{alg:SNLSDPclique} 
on noiseless problems,
using step {\texttt{RigidCliqueUnion}}.
The values for Average Degree, \# Sensors Positioned, and CPU Time are averaged over ten random instances.  The values for Max~Error and RMSD values are averaged over the successful instances.}
\label{table:RigidCliqueUnion}
\begin{center}
\begin{footnotesize}
\begin{tabular}{|c|c|c|c||c|c|c|c|c|c|}
\hline
           &     &            &     & \# Successful & Average & \# Sensors &          &           &      \\           
\# sensors & $r$ & \# anchors & $R$ & Instances     &  Degree & Positioned & CPU Time & Max Error & RMSD \\
\hline
2000 & 2 & 4 & .07 &  9/10 & 14.5 & 1632.3 & 1 s & 6e-13 & 2e-13 \\
2000 & 2 & 4 & .06 &  5/10 & 10.7 &  720.0 & 1 s & 1e-12 & 4e-13 \\
2000 & 2 & 4 & .05 &  0/10 &  7.5 &    0.0 & 1 s &   -   &   -   \\
2000 & 2 & 4 & .04 &  0/10 &  4.9 &    0.0 & 1 s &   -   &   -   \\
\hline
4000 & 2 & 4 & .07 & 10/10 & 29.0 & 3904.1 & 2 s & 2e-13 & 6e-14 \\
4000 & 2 & 4 & .06 & 10/10 & 21.5 & 3922.3 & 2 s & 6e-13 & 2e-13 \\
4000 & 2 & 4 & .05 & 10/10 & 15.1 & 3836.2 & 2 s & 4e-13 & 2e-13 \\
4000 & 2 & 4 & .04 &  1/10 &  9.7 &  237.8 & 2 s & 1e-13 & 4e-14 \\
\hline
6000 & 2 & 4 & .07 & 10/10 & 43.5 & 5966.9 & 4 s & 3e-13 & 8e-14 \\
6000 & 2 & 4 & .06 & 10/10 & 32.3 & 5964.4 & 4 s & 2e-13 & 7e-14 \\
6000 & 2 & 4 & .05 & 10/10 & 22.6 & 5894.8 & 3 s & 3e-13 & 1e-13 \\
6000 & 2 & 4 & .04 & 10/10 & 14.6 & 5776.9 & 3 s & 7e-13 & 2e-13 \\
\hline
8000 & 2 & 4 & .07 & 10/10 & 58.1 & 7969.8 & 6 s & 3e-13 & 8e-14 \\
8000 & 2 & 4 & .06 & 10/10 & 43.0 & 7980.9 & 6 s & 2e-13 & 8e-14 \\
8000 & 2 & 4 & .05 & 10/10 & 30.1 & 7953.1 & 5 s & 6e-13 & 2e-13 \\
8000 & 2 & 4 & .04 & 10/10 & 19.5 & 7891.0 & 5 s & 6e-13 & 2e-13 \\
\hline
10000 & 2 & 4 & .07 & 10/10 & 72.6 & 9974.6 & 9 s & 3e-13 & 7e-14 \\
10000 & 2 & 4 & .06 & 10/10 & 53.8 & 9969.1 & 8 s & 9e-13 & 1e-13 \\
10000 & 2 & 4 & .05 & 10/10 & 37.7 & 9925.4 & 7 s & 5e-13 & 2e-13 \\
10000 & 2 & 4 & .04 & 10/10 & 24.3 & 9907.2 & 7 s & 3e-13 & 1e-13 \\
\hline
\hline
 20000 & 2 & 4 & .030 & 10/10 & 27.6 & 19853.3 &     17 s & 7e-13 & 2e-13 \\
 40000 & 2 & 4 & .020 & 10/10 & 24.7 & 39725.2 &     50 s & 2e-12 & 6e-13 \\
 60000 & 2 & 4 & .015 & 10/10 & 21.0 & 59461.1 & 1 m 52 s & 1e-11 & 8e-13 \\
 80000 & 2 & 4 & .013 & 10/10 & 21.0 & 79314.1 & 3 m 24 s & 4e-12 & 1e-12 \\
100000 & 2 & 4 & .011 & 10/10 & 18.8 & 99174.4 & 5 m 42 s & 2e-10 & 9e-11 \\
\hline
\end{tabular}
\end{footnotesize}
\end{center}
\end{table}

\item
In Table~\ref{table:RigidNodeAbsorb} we report the results of increasing the level of our algorithm to use both the {\texttt{RigidCliqueUnion} and \texttt{RigidNodeAbsorb}} steps (see Figures~\ref{fig:intersembedr} and \ref{fig:cliqabsorb}) to solve the random problems.
We see that
the number of sensors localized has increased and
that there has been a small, almost insignificant, increase in the CPU time.

\begin{table}
\caption[{\texttt{RigidCliqueUnion}} and {\texttt{RigidNodeAbsorb}}]{Results of Algorithm~\ref{alg:SNLSDPclique} 
on noiseless problems,  
using steps {\texttt{RigidCliqueUnion}} and {\texttt{RigidNodeAbsorb}}.
The values for Average Degree, \# Sensors Positioned, and CPU Time are averaged over ten random instances.  The values for Max~Error and RMSD values are averaged over the successful instances.}
\label{table:RigidNodeAbsorb}
\begin{center}
\begin{footnotesize}
\begin{tabular}{|c|c|c|c||c|c|c|c|c|c|}
\hline
           &     &            &     & \# Successful & Average & \# Sensors &          &           &      \\           
\# sensors & $r$ & \# anchors & $R$ & Instances     &  Degree & Positioned & CPU Time & Max Error & RMSD \\
\hline
2000 & 2 & 4 & .07 &  10/10 & 14.5 & 2000.0 & 1 s & 6e-13 & 2e-13 \\
2000 & 2 & 4 & .06 &  10/10 & 10.7 & 1999.9 & 1 s & 8e-13 & 3e-13 \\
2000 & 2 & 4 & .05 &  10/10 &  7.5 & 1996.7 & 1 s & 9e-13 & 2e-13 \\
2000 & 2 & 4 & .04 &   9/10 &  4.9 & 1273.8 & 3 s & 2e-11 & 4e-12 \\
\hline
4000 & 2 & 4 & .07 & 10/10 & 29.0 & 4000.0 & 2 s & 2e-13 & 6e-14 \\
4000 & 2 & 4 & .06 & 10/10 & 21.5 & 4000.0 & 2 s & 6e-13 & 2e-13 \\
4000 & 2 & 4 & .05 & 10/10 & 15.1 & 3999.9 & 2 s & 6e-13 & 3e-13 \\
4000 & 2 & 4 & .04 & 10/10 &  9.7 & 3998.2 & 2 s & 1e-12 & 5e-13 \\
\hline
6000 & 2 & 4 & .07 & 10/10 & 43.5 & 6000.0 & 4 s & 3e-13 & 8e-14 \\
6000 & 2 & 4 & .06 & 10/10 & 32.3 & 6000.0 & 4 s & 2e-13 & 7e-14 \\
6000 & 2 & 4 & .05 & 10/10 & 22.6 & 6000.0 & 3 s & 3e-13 & 1e-13 \\
6000 & 2 & 4 & .04 & 10/10 & 14.6 & 5999.4 & 3 s & 8e-13 & 3e-13 \\
\hline
8000 & 2 & 4 & .07 & 10/10 & 58.1 & 8000.0 & 6 s & 3e-13 & 7e-14 \\
8000 & 2 & 4 & .06 & 10/10 & 43.0 & 8000.0 & 5 s & 2e-13 & 8e-14 \\
8000 & 2 & 4 & .05 & 10/10 & 30.1 & 8000.0 & 5 s & 6e-13 & 2e-13 \\
8000 & 2 & 4 & .04 & 10/10 & 19.5 & 8000.0 & 4 s & 7e-13 & 2e-13 \\
\hline
10000 & 2 & 4 & .07 & 10/10 & 72.6 & 10000.0 & 9 s & 3e-13 & 7e-14 \\
10000 & 2 & 4 & .06 & 10/10 & 53.8 & 10000.0 & 8 s & 3e-13 & 1e-13 \\
10000 & 2 & 4 & .05 & 10/10 & 37.7 & 10000.0 & 7 s & 5e-13 & 2e-13 \\
10000 & 2 & 4 & .04 & 10/10 & 24.3 & 10000.0 & 6 s & 3e-13 & 1e-13 \\
\hline
\hline
 20000 & 2 & 4 & .030 & 10/10 & 27.6 &  20000.0 &     17 s & 7e-13 & 2e-13 \\
 40000 & 2 & 4 & .020 & 10/10 & 24.7 &  40000.0 &     51 s & 2e-12 & 6e-13 \\
 60000 & 2 & 4 & .015 & 10/10 & 21.0 &  60000.0 & 1 m 53 s & 2e-12 & 7e-13 \\
 80000 & 2 & 4 & .013 & 10/10 & 21.0 &  80000.0 & 3 m 21 s & 4e-12 & 1e-12 \\
100000 & 2 & 4 & .011 & 10/10 & 18.8 & 100000.0 & 5 m 46 s & 2e-10 & 9e-11 \\
\hline
\end{tabular}
\end{footnotesize}
\end{center}
\end{table}

\item
In Table~\ref{table:NonRigidCliqueUnion} we report the results of increasing the level of our algorithm to use steps \texttt{RigidCliqueUnion}, \texttt{RigidNodeAbsorb}, and \texttt{NonRigidCliqueUnion} (see Figures~\ref{fig:intersembedr}, \ref{fig:cliqabsorb}, and \ref{fig:deg2cliqred}) to solve the random problems, further increasing the class of problems that we can complete.

\end{enumerate}
Testing a version of our algorithm that uses all four steps is still ongoing.  From the above results, we can see that our facial reduction technique works very well for solving many instances of the \SNL problem.  We are confident that the results of our ongoing tests will continue to show that we are able to solve an even larger class of \SNL problems.

\begin{table}
\caption[{\texttt{RigidCliqueUnion}}, {\texttt{RigidNodeAbsorb}}, and {\texttt{NonRigidCliqueUnion}}]{Results of Algorithm~\ref{alg:SNLSDPclique} 
on noiseless problems,
using steps {\texttt{RigidCliqueUnion}}, {\texttt{RigidNodeAbsorb}}, and {\texttt{NonRigidCliqueUnion}}.
The values for Average Degree, \# Sensors Positioned, and CPU Time are averaged over ten random instances.  The values for Max~Error and RMSD values are averaged over the successful instances.
The results of the tests with more than $6000$ sensors remain the same as in Table~\ref{table:RigidNodeAbsorb}.}
\label{table:NonRigidCliqueUnion}
\begin{center}
\begin{footnotesize}
\begin{tabular}{|c|c|c|c||c|c|c|c|c|c|}
\hline
           &     &            &     & \# Successful & Average & \# Sensors &          &           &      \\           
\# sensors & $r$ & \# anchors & $R$ & Instances     &  Degree & Positioned & CPU Time & Max Error & RMSD \\
\hline
2000 & 2 & 4 & .07 &  10/10 & 14.5 & 2000.0 & 1 s & 6e-13 & 2e-13 \\
2000 & 2 & 4 & .06 &  10/10 & 10.7 & 1999.9 & 1 s & 8e-13 & 3e-13 \\
2000 & 2 & 4 & .05 &  10/10 &  7.5 & 1997.9 & 1 s & 9e-13 & 2e-13 \\
2000 & 2 & 4 & .04 &  10/10 &  4.9 & 1590.8 & 5 s & 2e-11 & 7e-12 \\
\hline
4000 & 2 & 4 & .07 & 10/10 & 29.0 & 4000.0 & 2 s & 2e-13 & 6e-14 \\
4000 & 2 & 4 & .06 & 10/10 & 21.5 & 4000.0 & 2 s & 6e-13 & 2e-13 \\
4000 & 2 & 4 & .05 & 10/10 & 15.1 & 3999.9 & 2 s & 6e-13 & 3e-13 \\
4000 & 2 & 4 & .04 & 10/10 &  9.7 & 3998.2 & 3 s & 1e-12 & 5e-13 \\
\hline
6000 & 2 & 4 & .07 & 10/10 & 43.5 & 6000.0 & 4 s & 3e-13 & 8e-14 \\
6000 & 2 & 4 & .06 & 10/10 & 32.3 & 6000.0 & 4 s & 2e-13 & 7e-14 \\
6000 & 2 & 4 & .05 & 10/10 & 22.6 & 6000.0 & 3 s & 3e-13 & 1e-13 \\
6000 & 2 & 4 & .04 & 10/10 & 14.6 & 5999.4 & 3 s & 8e-13 & 3e-13 \\
\hline
\end{tabular}
\end{footnotesize}
\end{center}
\end{table}

\subsection{Noisy Data and Higher Dimensional Problems}
\label{sect:noisy}
The above algorithm was derived based on the fact that the \SNL had exact data;
i.e., for a given clique $\alpha$ we had an exact correspondence between the 
\edm and the corresponding Gram matrix $B=\KK^\dagger (D[\alpha])$.
To extend this to the noisy case, we apply a naive, greedy approach. When
the Gram matrix $B$ is needed, then we use the best rank $r$ positive
semidefinite approximation to $B$ using the well-known Eckert-Young
result; see e.g., \cite[Cor.~2.3.3]{GoVan:79}.
\begin{lem}
Suppose that $B \in \Sn$ with spectral decomposition $B=\sum_{i=1}^n
\lambda_i u_iu_i^T, \lambda_1 \geq \ldots \geq \lambda_n$. 
Then the best positive semidefinite approximation with at
most rank $r$ is $B_+= \sum_{i=1}^r (\lambda_i)_+ u_iu_i^T$, where 
$(\lambda_i)_+ = \max \{0, \lambda_i\}$.
\qed
\end{lem}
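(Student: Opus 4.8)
The plan is to measure the approximation in the Frobenius norm $\normF{\cdot}$ (consistent with the cited Eckart--Young--Mirsky result) and to decouple the two constraints---positive semidefiniteness and rank at most $r$---by splitting $B$ into its positive and negative spectral parts. I would write $B = B_{\oplus} + B_{\ominus}$, where $B_{\oplus} := \sum_{i=1}^n (\lambda_i)_+ u_i u_i^T$ is the projection of $B$ onto $\Snp$ and $B_{\ominus} := \sum_{i : \lambda_i < 0} \lambda_i u_i u_i^T \preceq 0$ collects the negative eigenvalues. Since these two matrices are supported on orthogonal eigenspaces, $\langle B_{\oplus}, B_{\ominus}\rangle = \trace(B_\oplus B_\ominus) = 0$, so the Frobenius geometry is Pythagorean along this splitting.

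First I would establish the lower bound. For any $X \succeq 0$ I would expand
\beq
\normF{B - X}^2 = \normF{B_{\oplus} - X}^2 + 2\langle B_{\oplus} - X, B_{\ominus}\rangle + \normF{B_{\ominus}}^2,
\eeq
and observe that $\langle B_{\oplus}, B_{\ominus}\rangle = 0$ while $\langle X, B_{\ominus}\rangle = \sum_{i:\lambda_i<0} \lambda_i\, (u_i^T X u_i) \le 0$, because $\lambda_i < 0$ and $u_i^T X u_i \ge 0$ for $X \succeq 0$. Hence the cross term $\langle B_\oplus - X, B_\ominus\rangle = -\langle X, B_\ominus\rangle$ is nonnegative and
\beq
\normF{B - X}^2 \ge \normF{B_{\oplus} - X}^2 + \normF{B_{\ominus}}^2
\eeq
for every $X \succeq 0$. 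If in addition $\rank X \le r$, then the Eckart--Young--Mirsky theorem (Golub--Van Loan, Cor.~2.3.3) applied to $B_{\oplus}$ gives $\normF{B_{\oplus} - X}^2 \ge \sum_{i=r+1}^n (\lambda_i)_+^2$, since the eigenvalues $(\lambda_1)_+ \ge \cdots \ge (\lambda_n)_+$ of $B_\oplus$ are truncated optimally by keeping the largest $r$, i.e. by $\sum_{i=1}^r (\lambda_i)_+ u_i u_i^T = B_+$.

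Next I would verify that this bound is attained at $X = B_+$. The key structural point is that truncating the spectral expansion of the \emph{positive semidefinite} matrix $B_{\oplus}$ to its $r$ largest eigenvalues keeps only nonnegative weights, so $B_+ \succeq 0$ and $\rank B_+ \le r$; thus $B_+$ is feasible. Moreover $B_{\oplus} - B_+ = \sum_{i=r+1}^n (\lambda_i)_+ u_i u_i^T$ and $B_{\ominus}$ are again supported on disjoint eigenvectors, so $\langle B_{\oplus} - B_+, B_{\ominus}\rangle = 0$ and the expansion collapses to $\normF{B - B_+}^2 = \normF{B_{\oplus} - B_+}^2 + \normF{B_{\ominus}}^2 = \sum_{i=r+1}^n (\lambda_i)_+^2 + \normF{B_\ominus}^2$, which matches the lower bound exactly. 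Therefore $B_+$ minimizes $\normF{B - X}$ over all $X \succeq 0$ with $\rank X \le r$, as claimed.

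The only genuinely delicate point is the interaction between the two constraints: the sign inequality $\langle X, B_{\ominus}\rangle \le 0$ is what shows the negative spectral part of $B$ can only \emph{increase} the error for a feasible $X$, which is what licenses replacing $B$ by its projection $B_{\oplus}$ and then invoking the ordinary (unconstrained) Eckart--Young truncation, whose output happens to remain positive semidefinite. If one instead wanted the statement for an arbitrary unitarily invariant norm, the Pythagorean step would no longer be available and would have to be replaced by a Ky Fan / majorization argument on the Rayleigh quotients $v_j^T B\, v_j$ of an eigenbasis of $X$; establishing that step would be the main obstacle in that more general setting.
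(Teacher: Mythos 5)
Your proof is correct. Note, though, that the paper itself offers no proof at all: the lemma is stated with an immediate \qed and a pointer to the Eckart--Young theorem in Golub--Van Loan, Cor.~2.3.3. That citation alone does not literally cover the statement, since Eckart--Young addresses the rank constraint only, not the joint constraint $X \succeq 0$, $\rank X \le r$. What you supply is exactly the missing reduction: splitting $B = B_{\oplus} + B_{\ominus}$ along its positive and negative spectral parts, observing that for feasible $X$ the cross term satisfies $\langle B_{\oplus} - X, B_{\ominus}\rangle = -\langle X, B_{\ominus}\rangle \ge 0$ (so the negative part of the spectrum can only add to the error), then invoking Eckart--Young for the PSD matrix $B_{\oplus}$ and checking that the optimal truncation is itself feasible and attains the bound. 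This is the standard and, as far as a Frobenius-norm argument goes, essentially canonical way to prove the result; your closing remark is also apt that for a general unitarily invariant norm the Pythagorean step breaks down and a majorization argument would be needed. In short: the paper relies on authority, you prove the statement, and the two are consistent --- your argument is what a careful reader would have to reconstruct to justify the paper's citation.
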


We follow the  multiplicative noise model in, e.g.,~\cite{BYIEEE:06,BY:04,KimKojimaWaki:09,pongtseng:09,tseng:07,WangZhengBoydYe:06}; 
i.e.,  the noisy (squared) distances $D_{ij}$ are given by
\[
D_{ij} = \left( \|p_i-p_j\|(1+\sigma\epsilon_{ij}) \right)^2,
\]
where $\sigma \geq 0$ is the noise factor and $\epsilon_{ij}$ is chosen
from the standard normal distribution $\N(0,1)$.  
We include preliminary test results in Table~\ref{table:NoisyDim23} for problems with 0\%-1\% noise 
with embedding dimension $r=2,3$.  Note that we do not apply the noise to the distances between the anchors.

\begin{table}
\label{table:NoisyDim23}
\caption[Problems with noise and $r = 2,3$]{Results of Algorithm~\ref{alg:SNLSDPclique} 
for problems with noise and $r = 2,3$, using {\texttt{RigidCliqueUnion}} and {\texttt{RigidNodeAbsorb}}.  The values for Average Degree, \# Sensors Positioned, CPU Time, Max~Error and RMSD are averaged over ten random instances.}
\begin{center}
\begin{footnotesize}
\begin{tabular}{|c|c|c|c|c||c|c|c|c|c|}
\hline
         &            &     &            &     & Average & \# Sensors &          &           &      \\           
$\sigma$ & \# sensors & $r$ & \# anchors & $R$ &  Degree & Positioned & CPU Time & Max Error & RMSD \\
\hline
\hline
0    &  2000 & 2 & 4 & .08 & 18.8 &  2000.0 & 1 s & 1e-13 & 3e-14 \\
1e-6 &  2000 & 2 & 4 & .08 & 18.8 &  2000.0 & 1 s & 2e-04 & 4e-05 \\
1e-4 &  2000 & 2 & 4 & .08 & 18.8 &  2000.0 & 1 s & 2e-02 & 4e-03 \\
1e-2 &  2000 & 2 & 4 & .08 & 18.8 &  2000.0 & 1 s & 2e+01 & 3e+00 \\
\hline
0    &  6000 & 2 & 4 & .06 & 32.3 &  6000.0 & 4 s & 2e-13 & 7e-14 \\
1e-6 &  6000 & 2 & 4 & .06 & 32.3 &  6000.0 & 4 s & 8e-04 & 3e-04 \\
1e-4 &  6000 & 2 & 4 & .06 & 32.3 &  6000.0 & 4 s & 9e-02 & 3e-02 \\
1e-2 &  6000 & 2 & 4 & .06 & 32.3 &  6000.0 & 4 s & 2e+01 & 3e+00 \\
\hline
0    & 10000 & 2 & 4 & .04 & 24.3 & 10000.0 & 6 s & 3e-13 & 1e-13 \\
1e-6 & 10000 & 2 & 4 & .04 & 24.3 & 10000.0 & 6 s & 5e-04 & 2e-04 \\
1e-4 & 10000 & 2 & 4 & .04 & 24.3 & 10000.0 & 6 s & 5e-02 & 2e-02 \\
1e-2 & 10000 & 2 & 4 & .04 & 24.3 & 10000.0 & 7 s & 4e+02 & 1e+02 \\
\hline
\hline
0    &  2000 & 3 & 5 & .20 & 26.6 &  2000.0 & 1 s & 3e-13 & 8e-14 \\
1e-6 &  2000 & 3 & 5 & .20 & 26.6 &  2000.0 & 1 s & 7e-04 & 2e-04 \\
1e-4 &  2000 & 3 & 5 & .20 & 26.6 &  2000.0 & 1 s & 8e-02 & 2e-02 \\
1e-2 &  2000 & 3 & 5 & .20 & 26.6 &  2000.0 & 1 s & 2e+03 & 4e+02 \\
\hline
0    &  6000 & 3 & 5 & .15 & 35.6 &  6000.0 & 5 s & 3e-13 & 6e-14 \\
1e-6 &  6000 & 3 & 5 & .15 & 35.6 &  6000.0 & 5 s & 1e-03 & 2e-04 \\
1e-4 &  6000 & 3 & 5 & .15 & 35.6 &  6000.0 & 5 s & 1e-01 & 2e-02 \\
1e-2 &  6000 & 3 & 5 & .15 & 35.6 &  6000.0 & 6 s & 9e+01 & 9e+00 \\
\hline
0    & 10000 & 3 & 5 & .10 & 18.7 & 10000.0 &  9 s & 3e-12 & 2e-13 \\
1e-6 & 10000 & 3 & 5 & .10 & 18.7 & 10000.0 & 10 s & 4e-02 & 2e-03 \\
1e-4 & 10000 & 3 & 5 & .10 & 18.7 & 10000.0 & 10 s & 2e+00 & 8e-02 \\
1e-2 & 10000 & 3 & 5 & .10 & 18.7 & 10000.0 & 10 s & 4e+02 & 1e+01 \\
\hline
\end{tabular}
\end{footnotesize}
\end{center}
\end{table}

\section{Conclusion}
\label{sect:concl}
The \SDP relaxation of \SNL is highly (implicitly) degenerate, since
the feasible set of this \SDP is restricted to a low dimensional
face of the \SDP cone, resulting in the failure of the Slater constraint 
qualification (strict feasibility).
We take advantage of this degeneracy by finding
explicit representations of intersections
of faces of the \SDP cone corresponding to unions of intersecting cliques. 
In addition, from these representations we force further degeneracy in
order to find the minimal face that contains the optimal solution.
In many cases, we can efficiently compute the exact solution to the \SDP relaxation
\emph{without} using any \SDP solver.

In some cases it is not possible to reduce the problem down to a single clique.  However, in these cases, the intersection of the remaining faces returned by {\texttt{SNLSDPclique}} will produce a face containing the feasible region of the original problem.  This face can then be used to reduce the problem before passing the problem to an \SDP solver, where, for example, the trace of the semidefinite matrix can be maximized \cite{MR2398864} to try to keep the embedding dimension small.   As an example, if the problem is composed of disjoint cliques, then Corollary~\ref{cor:disjcliques} can be used to significantly reduce the problem size.  This reduction can transform a large intractable problem into a much smaller problem that can be solved efficiently via an \SDP solver.

\clearpage
\phantomsection
\addcontentsline{toc}{section}{References}
\bibliography{.master,.edm,.psd,.bjorBOOK}

\clearpage
\phantomsection
\addcontentsline{toc}{section}{Index}
\printindex

\end{document}